\newtheorem{thm}{Theorem}[section]
\newtheorem{lem}[thm]{Lemma}
\newtheorem{prop}[thm]{Proposition}
\newtheorem{cor}[thm]{Corollary}
\theoremstyle{definition}
\newtheorem{defn}[thm]{Definition}
\newtheorem{ex}[thm]{Example}
\newtheorem{rem}[thm]{Remark}
\newtheorem{hyp}[thm]{Hypothesis}
\newcommand{\ol}{\overline}
\newcommand{\defbold}{\textbf}
\newcommand{\inv}{^{-1}}
\newcommand{\N}{\mathrm{N}}
\newcommand{\Z}{\mathrm{Z}}
\newcommand{\triv}{\{1\}}
\newcommand{\Aut}{\mathrm{Aut}}
\newcommand{\Alt}{\mathrm{Alt}}
\newcommand{\Sym}{\mathrm{Sym}}
\newcommand{\SL}{\mathrm{SL}}
\newcommand{\PSL}{\mathrm{PSL}}
\newcommand{\GL}{\mathrm{GL}}
\newcommand{\PGL}{\mathrm{PGL}}
\newcommand{\AGL}{\mathrm{AGL}}
\newcommand{\GaL}{\Gamma\mathrm{L}}
\newcommand{\PGaL}{\mathrm{P}\Gamma\mathrm{L}}
\newcommand{\AGaL}{\mathrm{A}\Gamma\mathrm{L}}
\newcommand{\Fb}{\mathbb{F}}
\newcommand{\Qb}{\mathbb{Q}}
\newcommand{\Zb}{\mathbb{Z}}
\newcommand{\mc}[1]{\mathcal{#1}}
\newcommand{\grp}[1]{\langle #1 \rangle}
\begin{document}

\title{Multiple transitivity except for a system of imprimitivity}

\preauthor{\large}
\DeclareRobustCommand{\authoring}{
\renewcommand{\thefootnote}{\arabic{footnote}}
\begin{center}Colin D. Reid\textsuperscript{1}\footnotetext[1]{Research supported in part by ARC grant FL170100032.}\\ \bigskip
The University of Newcastle \\ School of Information and Physical Sciences \\ Callaghan, NSW 2308, Australia \\ Email: \href{mailto:colin@reidit.net}{colin@reidit.net} 
\end{center}
}
\author{\authoring}
\postauthor{\par}

\maketitle

\begin{abstract}
Let $\Omega$ be a set equipped with an equivalence relation $\sim$; we refer to the equivalence classes as blocks of $\Omega$.  A permutation group $G \le \Sym(\Omega)$ is \defbold{$k$-by-block-transitive} if $\sim$ is $G$-invariant, with at least $k$ blocks, and $G$ is transitive on the set of $k$-tuples of points such that no two entries lie in the same block.  The action is \defbold{block-faithful} if the action on the set of blocks is faithful.

In this article we classify the finite block-faithful $2$-by-block-transitive actions.  We also show that for $k \ge 3$, there are no finite block-faithful $k$-by-block-transitive actions with nontrivial blocks.
\end{abstract}

\section{Introduction}

Given a group $G$ acting on a set $X$ and $x_1,x_2,\dots, x_n \in X$, write $G(x_1)$ for the stabilizer of $x_1$ in $G$ and $G(x_1,\dots,x_n) = \bigcap^n_{i=1}G(x_i)$.  Let $\Omega$ be a set equipped with an equivalence relation $\sim$; we refer to the equivalence classes of $\sim$ as \defbold{blocks} of $\Omega$.  Given $\omega \in \Omega$, write $[\omega]$ for the $\sim$-class of $\omega$.  For $k \ge 1$, define the set $\Omega^{[k]}$ of \defbold{distant $k$-tuples} to consist of those $k$-tuples $(\omega_1,\omega_2,\dots,\omega_k)$ such that no two entries lie in the same block.  We then say $G \le \Sym(\Omega)$ is \defbold{$k$-by-block-transitive} if there are at least $k$ blocks, and $G$ acts transitively on $\Omega^{[k]}$.  Note that if $G$ is $k$-by-block-transitive, it is also $k'$-by-block-transitive for $k' < k$; in particular, $G$ is transitive on $\Omega$.  Also, by considering the orbits of a point stabilizer, it is readily seen (Lemma~\ref{lem:unique_blocks}) that if $G$ is $k$-by-block-transitive for some $k \ge 2$, then $\sim$ is the coarsest $G$-invariant equivalence relation other than the universal relation, so $\sim$ can be recovered from the action.

Given a $k$-by-block-transitive action on a set $\Omega$ with equivalence relation $\sim$, we can write $\Omega$ as $\Omega_0 \times B$ where the blocks are sets of the form $\{\omega'\} \times B$, and thus consider the action as an imprimitive extension of an action on $\Omega_0$.  Clearly, the action on $\Omega_0$ must be $k$-transitive, that is, transitive on ordered $k$-tuples of distinct elements.  Thus the most basic way to build a $k$-by-block-transitive action is to form a wreath product $H \wr_{\Omega_0} G_0$, where $G_0 \le \Sym(\Omega_0)$ is $k$-transitive and $H \le \Sym(B)$ is transitive, and let it act in the natural way on $\Omega_0 \times B$.  For the general case of a $k$-by-block-transitive $G \le \Sym(\Omega)$, then $G$ is equipped with an action $\pi_0: G \rightarrow \Sym(\Omega_0)$ on $\Omega_0$, where $G_0:= \pi_0(G)$ is $k$-transitive.  The point stabilizer $G(\omega)$ is a subgroup of the setwise stabilizer $G([\omega])$ of the block containing $\omega$, and we have $\pi_0(G(\omega)) \le \pi_0(G([\omega]))$.  The focus of the present article is to determine the possible values of $\pi_0(G(\omega))$ other than $\pi_0(G([\omega]))$ itself.

In classifying the possibilities for $\pi_0(G(\omega))$, we quickly reduce to the context of \defbold{block-faithful} $k$-by-block-transitive actions, meaning those such that $\pi_0$ is injective.  Namely, instead of considering the original action of $G$ on $\Omega$, it suffices to consider the action of $G_0$ on the cosets of $L = \pi_0(G(\omega))$.  One sees that the action of $G_0$ on $G_0/L$ is $k$-by-block-transitive, where the blocks correspond to left cosets of $G_1 = \pi_0(G([\omega]))$.  The block size of the resulting action is then the index $|G_1:L|$.

The purpose of this article is to classify the finite block-faithful $k$-by-block-transitive permutation groups $G$ for $k \ge 2$.  For all such groups $G$, it is enough to specify the group $G$, the block stabilizer $G_1 := G([\omega])$ and the point stabilizer $L:= G(\omega)$.  As the action of $G$ on $G/G_1$ is faithful and $k$-transitive, we can appeal to the known classification of finite $k$-transitive permutation groups for $k \ge 2$; then all that remains, for each possible pair $(G,G_1)$, is to classify the possible point stabilizers $L$ up to conjugacy, and we can ignore the case $G_1 = L$ as there is nothing new to say here.  One sees that in fact $G_1$ is the unique maximal subgroup of $G$ containing $L$, so we only need to specify the pair $(G,L)$.  The proofs of the results from the introduction will be given in Section~\ref{sec:proofs} at the end of the article.

First, let us note that we obtain no interesting examples for $k \ge 3$.

\begin{thm}\label{thmintro:3bbtrans}
Let $k \ge 3$ and let $G$ be a finite block-faithful $k$-by-block-transitive permutation group.  Then $G$ acts $k$-transitively, that is, the blocks are singletons.
\end{thm}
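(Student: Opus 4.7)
The plan is to argue by contradiction: assume a finite block-faithful $k$-by-block-transitive action with $k \ge 3$ and block size $m \ge 2$ exists. Since $k$-by-block-transitivity implies $3$-by-block-transitivity, we may assume $k = 3$. Block-faithfulness combined with $3$-by-block-transitivity then forces $G$ to act faithfully and $3$-transitively on the set $\Omega_0$ of $n$ blocks, so the classification of finite $3$-transitive permutation groups applies to $(G, \Omega_0)$.

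The main structural observation is the following. Fix three distinct blocks $B_1, B_2, B_3$ and set $H := G(B_1) \cap G(B_2) \cap G(B_3)$. The kernel of the $H$-action on $\Omega_0 \setminus \{B_1, B_2, B_3\}$ would fix every block of $\Omega_0$ and is therefore trivial by block-faithfulness, so $H$ embeds faithfully into $\Sym(\Omega_0 \setminus \{B_1, B_2, B_3\}) \cong S_{n-3}$. On the other hand, $3$-by-block-transitivity forces $H$ to act transitively on $B_1 \times B_2 \times B_3$, a set of size $m^3 \ge 8$: any two such triples are $G$-equivalent, hence equivalent under an element of $H$. Consequently the induced map $H \to \prod_{i=1}^{3} \Sym(B_i)$ has image transitive on the product. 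In short, $H$ is constrained to lie inside $S_{n-3}$, yet must also possess a quotient realising a transitive action on at least $8$ points with a prescribed $3$-fold product structure.

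Next I would invoke the classification of finite $3$-transitive permutation groups (Cameron, after CFSG): up to isomorphism of the action on $\Omega_0$, $G$ is either $S_n$ or $A_n$ (natural action); a sharply $3$-transitive group such as $\PGL_2(q)$ or a Zassenhaus group; a $3$-transitive affine group contained in $\AGaL_d(2)$ and containing the translations of $\mathbb{F}_2^d$; or one of the Mathieu-type groups $M_{11}$ (on $11$ or $12$ points), $M_{12}$, $M_{22}.2$, $M_{23}$, or $M_{24}$ in a standard action. The sharply $3$-transitive case is immediate since $H = 1$ and $m^3 \ge 8$. For $G = S_n$ or $A_n$ with $n-3 \ge 5$, $H \cong S_{n-3}$ or $A_{n-3}$; the nontrivial proper quotients are at most $C_2$ (for $S_{n-3}$) and the full group itself, and one checks that none can realise a transitive action on $m^3 \ge 8$ points via $\prod \Sym(B_i)$. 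For $m = 2$ this reduces to the observation that surjecting onto $(C_2)^3$ requires $\mathbb{F}_2$-rank $\ge 3$ in the abelianisation, whereas $H^{\mathrm{ab}}$ has rank at most $1$; for $m \ge 3$ it reduces to the observation that the image in $(S_m)^3$, being a quotient of $H$ of order at least $m^3$, must be the whole of $H$, whose essentially unique faithful action on $m$ points (for $m = n-3$) collapses into a diagonal embedding which is not transitive on the product. The small cases $n - 3 < 5$ are handled by direct calculation.

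The main obstacle is the Mathieu-type and affine cases, which must be checked individually using the specific subgroup structure of each $H$. A prototypical argument runs in $M_{11}$, where $H \cong Q_8$: no surjection $Q_8 \twoheadrightarrow (C_2)^3$ exists because every index-$2$ subgroup of $Q_8$ contains the unique involution $-1 \in Z(Q_8)$, ruling out $m = 2$, while larger $m$ is excluded by $|H| = 8 < m^3$. Similar ad hoc arguments---exploiting the explicit description of $H$ (e.g.\ from the \textsc{Atlas}) and, in the affine case, the subgroup structure of $\GL_d(2)$-stabilizers together with the block-transitivity constraints---dispose of the remaining Mathieu and affine cases, completing the proof.
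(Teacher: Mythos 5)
Your overall strategy --- reduce to $k=3$, pass to the three-block stabilizer $H$, and observe that $H$ must act transitively on $B_1\times B_2\times B_3$ while its image in $\Sym(B_1)\times\Sym(B_2)\times\Sym(B_3)$ is a quotient of $H$ --- is sound and genuinely different from the paper's (the paper instead shows that most $3$-transitive families admit no proper $2$-by-block-transitive extension at all, and for the two families that do, counts orbits on distant triples). But there are real gaps. The most serious is that your list of finite $3$-transitive groups is incomplete: among the groups with socle $\PSL_2(q)$ you list only the sharply $3$-transitive ones and dismiss them via $H=1$. Every $G$ with $\PGL_2(q)\le G\le \PGaL_2(q)$ (and further groups involving field automorphisms) is $3$-transitive on the projective line without being sharply so, and there $H$ is a nontrivial cyclic group of field automorphisms. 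This is precisely the family where the theorem is least obvious: the paper's Example~\ref{ex:ldc} exhibits genuine proper $2$-by-block-transitive extensions of such actions (e.g.\ for $\PGL_2(2^{21})\rtimes\grp{y}$ with blocks of size $7$), so these groups cannot be waved away, and the paper needs Proposition~\ref{prop:PSL:triples} to show the extensions have $cn^2>1$ orbits on distant triples. (Your framework could in fact handle this case --- a cyclic group acting coordinatewise on $B_1\times B_2\times B_3$ with each projection transitive has orbits of size $\mathrm{lcm}(m,m,m)=m<m^3$ --- but the case is simply absent from your proof.)

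Second, the $\Sym(n)/\Alt(n)$ argument for $m\ge 3$ does not hold together: the parenthetical ``for $m=n-3$'' has no justification (the block size $m$ is unrelated to the number of blocks), and the assertion that the image ``collapses into a diagonal embedding which is not transitive on the product'' is exactly the point needing proof. Once you know the image is a subdirect product of three faithful transitive degree-$m$ copies of $H\cong \Alt(n-3)$, transitivity on the product amounts to a triple factorization $|H_1\cap\alpha\inv(H_2)\cap\beta\inv(H_3)|=|H|/m^3$ with each $H_i$ of index $m$; pairwise factorizations of this kind do exist (e.g.\ $\Alt(6)$ as a product of two nonconjugate copies of $\Alt(5)$), so ruling out the triple ones requires an actual argument, not the remark that a strict diagonal is intransitive. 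Finally, the Mathieu and affine cases are only gestured at; note that the affine case has a one-line disposal you missed: by the paper's Corollary~\ref{cor:LDC_normal}, a block-faithful $k$-by-block-transitive group with nontrivial blocks has no nontrivial abelian normal subgroup, whereas an affine $3$-transitive group has a regular elementary abelian one.
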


\begin{cor}\label{corintro:3bbtrans}
Let $\Omega$ be a set and let $G \le \Sym(\Omega)$ be $k$-by-block-transitive, such that $k \ge 3$ and the set $\Omega_0$ of blocks is finite.  Then for $\omega \in \Omega$ we have $G([\omega]) = KG(\omega)$ where $K$ is the kernel of the action of $G$ on $\Omega_0$.
\end{cor}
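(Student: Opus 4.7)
The plan is to apply Theorem~\ref{thmintro:3bbtrans} to the block-faithful reduction sketched in the introduction. Let $G_0 := G/K = \pi_0(G) \le \Sym(\Omega_0)$, which is finite because $\Omega_0$ is. Write $G_1 := \pi_0(G([\omega]))$ and $L := \pi_0(G(\omega))$, so $L \le G_1 \le G_0$; the goal reduces to showing $G_1 = L$.

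I would then consider the action of $G_0$ on the coset space $G_0/L$ equipped with the equivalence relation under which $gL$ and $g'L$ are equivalent exactly when $gG_1 = g'G_1$. The blocks are in bijection with $\Omega_0$ via $gG_1 \leftrightarrow g\omega$, and the action is block-faithful by construction. To verify that it is $k$-by-block-transitive, I would lift a distant $k$-tuple $(g_1L,\dots,g_kL)$ in $G_0/L$ to any $(\tilde g_1,\dots,\tilde g_k) \in G^k$ with $\pi_0(\tilde g_i) = g_i$; then $(\tilde g_1\omega,\dots,\tilde g_k\omega)$ is a distant $k$-tuple in $\Omega$, and applying $k$-by-block-transitivity of $G$ on $\Omega$ and projecting to $G_0$ yields the desired transitivity on $G_0/L$.

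Since $G_0$ is finite, the induced action is a finite block-faithful $k$-by-block-transitive action with $k \ge 3$, so Theorem~\ref{thmintro:3bbtrans} forces the blocks to be trivial: $|G_1:L| = 1$, that is, $G([\omega])K = G(\omega)K$. Because $K$ acts trivially on $\Omega_0$, it fixes every block setwise, so $K \le G([\omega])$; combined with $G(\omega) \le G([\omega])$, this gives $KG(\omega) \subseteq G([\omega]) \subseteq G([\omega])K = KG(\omega)$, and hence equality. The substantive input is Theorem~\ref{thmintro:3bbtrans} itself; the only step that benefits from explicit verification is the transfer of $k$-by-block-transitivity to the induced action on $G_0/L$, and this lifts cleanly through any set-theoretic section of $G \to G_0$.
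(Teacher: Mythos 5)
Your proof is correct and follows exactly the route the paper intends: the paper gives no separate proof of this corollary, relying on the reduction to the block-faithful action of $G_0 = G/K$ on $G_0/L$ described in the introduction, followed by an application of Theorem~\ref{thmintro:3bbtrans}; your write-up simply makes that reduction explicit (including the verification that $k$-by-block-transitivity transfers and that $G_1 = L$ translates back to $G([\omega]) = KG(\omega)$).
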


For finite block-faithful $2$-by-block-transitive actions with nontrivial blocks, the picture is more complicated, but the groups involved are still somewhat special compared to the class of all finite $2$-transitive permutation groups.  If $G$ has such an action, we find that its socle is one of
\[
\PSL_n(q), \; \mathrm{PSU}_3(q), \; {^2\mathrm{B}}_2(q), \; {^2\mathrm{G}}_2(q), \; \mathrm{M}_{11}.
\]
Let $\mu$ be a generator of $\Fb^*_q$.  Given $g \in \PGL_{n+1}(q)$, we write $\mathrm{Pdet}(g)$ for the set of determinants of matrices representing $g$: this is a coset of $\det(\Z(\GL_{n+1}(q))) = \grp{\mu^{n+1}}$, so we regard $\mathrm{Pdet}(g)$ as an element of the group $\Fb^*_q/\grp{\mu^{n+1}}$.  Given $G \le \PGaL_{n+1}(q)$, define
\[
\mathrm{Pdet}(G) := \{\mathrm{Pdet}(g) \mid g \in G \cap \PGL_{n+1}(q))\}.
\]

\begin{thm}\label{thmintro:2bbtrans}
Let $G$ be a finite group with a faithful $2$-transitive action on the set $\Omega_0$, extending to a $2$-by-block-transitive action of $G$ on the set $\Omega = \Omega_0 \times B$, with block size $|B| \ge 2$; let $\omega \in \Omega$.  Then $G$ has a nonabelian simple socle $S$ and the stabilizer $G([\omega])$ of the block $[\omega]$ containing $\omega$ is the unique maximal subgroup of $G$ that contains $G(\omega)$.  If $S$ is of Lie type and naturally represented as a group of (projective) matrices of dimension $n+1$ over $\Fb_q$, then we can regard $G$ as a subgroup of $\PGaL_{n+1}(q)$, and for $H \le G$ we write $e_H:=|H:H\cap \PGL_{n+1}(q)|$.  If $S = \PSL_{n+1}(q)$ we take $G([\omega])$ to be a point stabilizer of the usual action of $G$ on the projective $n$-space $P_n(q)$, and write $W$ for the socle of $G([\omega])$.  Up to isomorphism of permutation groups, exactly one of the following is satisfied.
\begin{enumerate}[(a)]
\item $\PSL_{n+1}(q) \le G \le \PGaL_{n+1}(q)$, with $n \ge 2$, $q > 2$.  In this case $G(\omega)$ contains the soluble residual of $G([\omega])$; if $(n,q)=(2,3)$, then $G(\omega)$ is of the form $W \rtimes \SL_2(3)$.  In addition, $e_{G(\omega)} = e_G$, and the block size $|B|$ divides $q-1$ and is coprime to $|\mathrm{Pdet}(G)|$. 
\item $\PSL_3(q) \le G \le \PGaL_3(q)$ and $G(\omega)$ is contained in a group of the form
\[
L^{\GaL_1} = G \cap (W \rtimes \GaL_1(q^2)), \text{ such that } |L^{\GaL_1}:G(\omega)| \le 2;
\]
 in this case there are up to three possibilities for the group $G(\omega)$, which belong to different $G$-conjugacy classes, and $|B| = |L^{\GaL_1}:G(\omega)|q(q-1)/2$.
\item $S$ is of rank $1$ simple Lie type and the action of $G$ on $\Omega_0$ is the standard $2$-transitive action.  Moreover, $G(\omega) = N\grp{z}$, where $N$ is normal in $G([\omega])$ and contains $G([\omega]) \cap S$, and where $G([\omega])/N$ takes the form $\grp{xN} \rtimes \grp{zN}$ such that $|\grp{xN}| = |\grp{zN}|= |B|$.  In this case, $|B|$ divides $e_G$ and also divides the order of the multiplicative group of the field.
\item The action is one of eighteen exceptional $2$-by-block-transitive actions, listed in Table~\ref{table:exceptional} below.
\end{enumerate}
The permutation groups described in (a) and (d) are $2$-by-block-transitive.  In case (b) there are additional constraints on the possible groups $G(\omega)$: see Proposition~\ref{prop:psl:plane-field}. In case (c), the possible block sizes, and the number of conjugacy classes of stabilizers of $2$-by-block-transitive actions of the given block size, are both calculated by modular arithmetic: see Proposition~\ref{prop:LDC:rank_one}.
\end{thm}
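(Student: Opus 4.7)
The plan is to invoke the CFSG-based classification of finite $2$-transitive groups and, for each pair $(G, G_1)$ in that list, to determine which proper subgroups $L < G_1$ yield a block-faithful $2$-by-block-transitive action of block size $|G_1:L| \ge 2$. The key translation is that $2$-by-block-transitivity amounts to the single condition that $L$ act transitively on $(G/L) \setminus (G_1/L)$, equivalently $LxL = G \setminus G_1$ for any $x \in G \setminus G_1$. Thus the task is to classify coset actions $G/L$ that refine the given $2$-transitive action $G/G_1$ and satisfy this double-coset condition.

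The first step is to eliminate the case where the socle of $G$ is abelian. Suppose $G = V \rtimes G_1$ with $V$ an elementary abelian regular normal subgroup. Since $L \le G_1$ and $V \cap G_1 = 1$ we have $V \cap L = 1$, so $V$ acts freely on $\Omega$ with $|B|$ orbits of size $|V| = |\Omega_0|$; these orbits form a $G$-invariant partition, as $V \trianglelefteq G$. Because $V$ is transitive on $\Omega_0$ every $V$-orbit in $\Omega$ surjects onto $\Omega_0$, so the $V$-orbit partition is not contained in blocks; yet it is neither singletons nor universal. This contradicts Lemma~\ref{lem:unique_blocks}, which says $\sim$ is the coarsest nontrivial $G$-invariant equivalence relation and every other nontrivial one must be finer. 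Hence the socle of $G$ is a nonabelian simple group $S$.

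With $S$ nonabelian simple, the CFSG list of $2$-transitive actions supplies a short list of candidates, which I would treat family by family. For alternating and symmetric groups in the natural action, the analysis reduces to subgroups of $A_{n-1}$ or $S_{n-1}$ transitive on $n-1$ points and satisfying the double-coset condition, ruling out all but small exceptions entering Table~\ref{table:exceptional}. For $\PSL_{n+1}(q) \le G \le \PGaL_{n+1}(q)$ acting on projective $n$-space, $G_1$ is a maximal parabolic with unipotent radical $W$; transitivity of $L$ on the block forces $L$ to contain the soluble residual of $G_1$, and the remaining analysis of the Levi quotient, tracking determinants and field automorphisms, yields case~(a). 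When $n = 2$, an additional family arising from $\GaL_1(q^2) \le \GL_2(q)$ appears, giving case~(b). For rank-$1$ simple Lie type socles $\mathrm{PSU}_3(q)$, ${}^2\mathrm{B}_2(q)$, ${}^2\mathrm{G}_2(q)$, the stabilizer $G_1$ is a Borel subgroup of Frobenius type, and enumeration of the admissible complements $\grp{xN}\rtimes\grp{zN}$ in the relevant quotient reduces to a modular-arithmetic problem, recorded in Proposition~\ref{prop:LDC:rank_one} and giving case~(c). Finally, $\mathrm{M}_{11}$ and the remaining small-degree examples are handled by direct inspection to complete Table~\ref{table:exceptional}. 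The converse direction for cases (a) and (d) is verified by directly checking the double-coset condition $LxL = G \setminus G_1$.

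The main technical obstacle is the detailed analysis for $\PSL_{n+1}(q)$, which requires a thorough understanding of the subgroup structure of a maximal parabolic and a careful disentangling of diagonal automorphisms (responsible for the $\mathrm{Pdet}$ constraint on block size in case~(a)) from field automorphisms (responsible for the $e_G$ constraint). The rank-$1$ Lie type analysis is geometrically simpler but arithmetically more delicate, essentially reducing to bookkeeping in cyclic groups modulo the block size; this is the principal content of Proposition~\ref{prop:LDC:rank_one}.
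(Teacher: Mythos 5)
Your overall strategy coincides with the paper's: translate $2$-by-block-transitivity into the double-coset condition of Lemma~\ref{lem:double_coset}, eliminate affine type, and then run through the CFSG list of finite $2$-transitive groups family by family. Your elimination of the affine case is a valid variant of Corollary~\ref{cor:LDC_normal}: you derive the contradiction from the $V$-orbit partition violating the coarseness statement of Lemma~\ref{lem:unique_blocks}, whereas the paper shows a nontrivial normal subgroup must supplement $G(\omega)$ and hence cannot be abelian; both work.

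There is, however, a concrete gap in your case division: you never address the symplectic family $\mathrm{Sp}_{2n+2}(2)$ in its two $2$-transitive actions of degree $2^{2n+1}\pm 2^{n}$. This is an infinite family with nonabelian simple socle on the $2$-transitive list, so it cannot be absorbed into ``the remaining small-degree examples handled by direct inspection''; the paper needs a dedicated structural argument (Lemma~\ref{lem:LDC_symplectic}), showing that the setwise stabilizer of a pair splits as $G(\omega,\omega')\times\grp{s}$ with $s$ an involution swapping the two points, which makes the required identity $G(\omega,\omega')=sHsH$ impossible for proper $H$. A second problem is your assertion that, for $\PSL_{n+1}(q)$, transitivity of $L$ ``forces $L$ to contain the soluble residual of $G_1$'': this is false as stated, and it fails precisely in the cases you need to keep, namely the plane-field stabilizers inside $G\cap(W\rtimes\GaL_1(q^2))$ in case (b) and the stabilizer $C^4_2\rtimes\Alt(7)$ of $\PSL_5(2)$ in Table~\ref{table:exceptional}. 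Even the weaker statement $W\le L$ is one of the genuinely delicate points of the paper (Claim 1 and the closing argument of Proposition~\ref{prop:psl:nonstandard}), and the dichotomy between case (a) and cases (b)/(d) comes from applying Hering's theorem to the induced action on $V/\alpha_0$ via Lemma~\ref{lem:affine_gl}; none of this is supplied by the sketch. (Minor: the natural actions of $\Sym(d)$ and $\Alt(d)$ contribute no exceptions at all to Table~\ref{table:exceptional}, by Corollary~\ref{cor:no_ldc:symmetric}, and your case (c) should also include socle $\PSL_2(q)$.)
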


For all but the first of the exceptional $2$-by-block-transitive actions in Table~\ref{table:exceptional} below, we have 
\[
\PSL_{n+1}(q) \le G \le \PGaL_{n+1}(q)
\]
and $G([\omega])$ is a point stabilizer of the standard action of $G$ on the projective $n$-space $P_n(q)$; we again write $W$ for the socle of $G([\omega])$.  Blank spaces in the table indicate a repeated entry from the line above.  The third column indicates the structure of the stabilizer of a distant pair.  Note that in many cases $\PSL_3(q) = \PGL_3(q)$ (namely when $q - 1$ is not a multiple of $3$) or $\PGL_3(q) = \PGaL_3(q)$ (whenever $q$ is prime).

\begin{table}[h!]
\begin{center}

\begin{tabular}{ c | c | c | c | c }

$G$ & $G(\omega)$ & $G(\omega,\omega')$ & $|\Omega_0|$ & $|B|$  \\ \hline
$\mathrm{M}_{11}$ & $\Alt(6)$ & $C^2_3 \rtimes C_2$ & $11$ & $2$ \\ \hline 
$\PSL_5(2)$ & $W \rtimes \Alt(7)$ & $\PSL_3(2)$ & $31$ & $8$  \\ \hline
$\PSL_3(5)$ & $W \rtimes (\SL_2(3) \rtimes C_4)$ & $C^2_4$ & $31$ & $5$  \\
 & $W \rtimes (\SL_2(3) \rtimes C_2)$ &  $C^2_2$ & & $10$ \\
 & $W \rtimes \SL_2(3)$ & $\triv$ & & $20$   \\ \hline
$\PSL_3(7)$ & $W \rtimes (\SL_2(3).C_2)$ & $C_3$ & $57$ & $14$ \\
$\PGaL_3(7)$ & $W \rtimes (\SL_2(3).C_2 \times C_3)$ & $C^2_3$ & & \\ \hline
$\PSL_3(9)$ & $W \rtimes (\SL_2(5).C_4)$ & $\Sym(3)^2$ & $91$ & $12$   \\
$\PGaL_3(9)$ & $W \rtimes (\SL_2(5).D_8)$ & $\Sym(3)^2 \times C_2$ & &  \\ \hline
$\PSL_3(11)$ & $W \rtimes (\SL_2(5) \times C_5)$ & $C^2_5$ & $133$ & $22$   \\
 & $W \rtimes \SL_2(5)$ & $\triv$ & & $110$  \\ 
 & $W \rtimes (\GL_2(3) \times C_5)$ & $C^2_2$ & & $55$   \\
 & $W \rtimes (\SL_2(3) \times C_5)$ & $\triv$ & & $110$   \\ \hline
$\PGaL_3(19)$ & $W \rtimes (\SL_2(5) \times C_9)$ & $C^2_3$ & $381$ & $114$  \\ \hline
$\PSL_3(23)$ & $W \rtimes (\SL_2(3).C_2 \times C_{11})$ & $\triv$ & $553$ & $506$  \\ \hline
$\PSL_3(29)$ & $W \rtimes ((\SL_2(5) \rtimes C_2) \times C_7)$ & $C^2_2$ & $871$ & $406$   \\
 & $W \rtimes (\SL_2(5) \times C_7)$ & $\triv$ & & $812$   \\ \hline
$\PSL_3(59)$ & $W \rtimes (\SL_2(5) \times C_{29})$ & $\triv$ & $3541$ & $3422$ \\ \hline
\end{tabular}
\captionof{table}{Exceptional $2$-by-block-transitive actions\label{table:exceptional}}
\end{center}
\end{table}

The following emerges as an observation on the classification.

\begin{cor}\label{corintro:2bbtrans:socle}
Let $G$ be a finite block-faithful $2$-by-block-transitive permutation group.  Then the socle of a block stabilizer acts trivially on that block.
\end{cor}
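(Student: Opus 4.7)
The plan is to derive the corollary as a case-by-case consequence of Theorem~\ref{thmintro:2bbtrans}. A uniform first reduction: it suffices to show $\Soc(G([\omega])) \le G(\omega)$, for then, since $\Soc(G([\omega]))$ is normal in $G([\omega])$ and $G([\omega])$ acts transitively on $[\omega]$, any $\omega' = g\omega$ with $g \in G([\omega])$ and any $s \in \Soc(G([\omega]))$ satisfy $s\omega' = g(g^{-1}sg)\omega = g\omega = \omega'$, using $g^{-1}sg \in \Soc(G([\omega])) \le G(\omega)$. Hence $\Soc(G([\omega]))$ acts trivially on $[\omega]$.

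In cases (a), (b), and in all rows of Table~\ref{table:exceptional} except the $\mathrm{M}_{11}$ row, $G([\omega])$ is a projective-point stabilizer inside $\PSL_{n+1}(q) \le G \le \PGaL_{n+1}(q)$; its socle is the elementary abelian unipotent radical $W$ (the Levi $\GL_n(q)$ acts irreducibly on $W \cong \Fb_q^n$ with centralizer $W$ in $G([\omega])$, so $W$ is the unique minimal normal subgroup). In case (a), $W$ lies in the soluble residual of $G([\omega])$ and hence in $G(\omega)$; the exception $(n,q)=(2,3)$ is explicit as $W \rtimes \SL_2(3)$. In the relevant table rows, $G(\omega)$ is displayed as a semidirect product with first factor $W$, so $W \le G(\omega)$ by inspection. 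For case (b), $G(\omega)$ has index at most $2$ in $L^{\GaL_1} \supseteq W$, so it suffices to show $W \le [L^{\GaL_1}, L^{\GaL_1}]$: for any $t \ne 1$ in $L^{\GaL_1} \cap \Fb_{q^2}^*$, the multiplicative action on $W \cong \Fb_{q^2}^+$ gives $[t, W] = (t-1)W = W$, which supplies the required commutators and forces $W$ to lie in every index-$\le 2$ subgroup of $L^{\GaL_1}$.

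For the $\mathrm{M}_{11}$ row, $G([\omega]) = \mathrm{M}_{10}$ has socle $\Alt(6) = G(\omega)$, and the containment is immediate. In case (c), $G([\omega])$ is an almost-simple extension of the Borel $B := G([\omega]) \cap S$ of the rank-$1$ simple group $S$. For any minimal normal subgroup $M$ of $G([\omega])$, the intersection $M \cap B$ is normal in $G([\omega])$, so either $M \le B$ or $M \cap B = 1$; in the latter case $[M, B] \le M \cap B = 1$ places $M$ in $C_G(B)$, and for the rank-$1$ Lie-type groups in question $C_G(B) \le Z(U) \le B$, ruling this out. Hence $\Soc(G([\omega])) \le B$, and the structure $G(\omega) = N\grp{z}$ with $N \supseteq B$ from case (c) yields $\Soc(G([\omega])) \le G(\omega)$. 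The main obstacle I anticipate is the commutator computation in case (b): one must confirm that $L^{\GaL_1}$ does contain a nontrivial element of $\Fb_{q^2}^*$ (rather than only Frobenius-type elements), which will require consulting the concrete form of $L^{\GaL_1}$ given in Proposition~\ref{prop:psl:plane-field}.
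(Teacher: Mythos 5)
Your proof is correct, and its skeleton --- reduce to $\Soc(G([\omega])) \le G(\omega)$ via normality of the socle, then run through the cases of Theorem~\ref{thmintro:2bbtrans} --- coincides with the paper's. The difference lies in how each case is closed. The paper does not argue from the statement of Theorem~\ref{thmintro:2bbtrans} at all: it simply cites Proposition~\ref{prop:psl:nonstandard}, whose conclusion already contains $W \le L$ for every action with a projective-space block stabilizer (covering cases (a), (b) and the relevant table rows at one stroke), and Proposition~\ref{prop:LDC:rank_one}(ii), which gives $L = PN\grp{z} \ge P$ in the rank-$1$ case; the corollary is thus a one-line byproduct of the internal classification. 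You instead extract the containment from the published statement of the theorem, which costs you three extra verifications: that $W$ lies in the soluble residual of $G([\omega])$ in case (a) (true, since $[W,\SL_n(q)]=W$ and $\SL_n(q)$ is perfect outside the separately listed $(n,q)=(2,3)$ case), the commutator computation in case (b) (valid: by Proposition~\ref{prop:psl:plane-field} the group $L^{\GaL_1}$ contains the nontrivial Singer-cycle power $\hat{x}$, which acts on $W$ as multiplication by a nonidentity element of $\Fb_{q^2}^*$, so $[\hat{x},W]=W$ and $W$ lies in every subgroup of index at most $2$), and the minimal-normal-subgroup/centralizer argument in case (c). All three check out. Your case (c) is in one respect more careful than the paper's: the paper asserts $K = P$, which is literally correct only when $P$ is abelian (for $\mathrm{PSU}_3$, $^2\mathrm{B}_2$ and $^2\mathrm{G}_2$ the socle of $G([\omega])$ lies in $Z(P)$, a proper subgroup of $P$), whereas your argument yields $\Soc(G([\omega])) \le G([\omega]) \cap S \le N \le G(\omega)$ without that identification. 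What the paper's route buys is brevity; what yours buys is independence from the internal propositions, at the price of re-proving facts they already supply.
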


We can also classify the finite sharply $2$-by-block-transitive permutation groups, that is, actions preserving an equivalence relation on a finite set such that for any two distant pairs, there is exactly one element mapping the first distant pair to the second distant pair.  With six exceptions, these are sharply $2$-transitive or arise from case (b) of Theorem~\ref{thmintro:2bbtrans}.

\begin{thm}\label{thmintro:2bbtrans:sharp}
Let $G$ be a group acting on the finite set $\Omega$ and acting faithfully on $\Omega^{[2]}$.  Then $G$ acts regularly on $\Omega^{[2]}$ if and only if one of the following holds.
\begin{enumerate}[(a)]
\item $G$ is sharply $2$-transitive, in other words, the blocks are singletons.
\item We are in case (b) of Theorem~\ref{thmintro:2bbtrans} with $|L^{\GaL_1}:G(\omega)| = 2$ and $|G| = |\PGL_3(q)|$, and the block size is $q(q-1)$.
\item We have one of the six exceptional actions from Table~\ref{table:exceptional} for which the stabilizer of a distant pair is trivial; in particular, $G = \PSL_3(q) = \PGaL_3(q)$ with $q \in \{5,11,23,29,59\}$, and the block size is $q(q-1)$.
\end{enumerate}
\end{thm}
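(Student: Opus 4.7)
The plan is to prove the two directions of the iff separately, with the forward direction a direct verification and the reverse a case analysis built on Theorem~\ref{thmintro:2bbtrans}. For the \emph{if} direction: in case (a) sharp $2$-transitivity is tautological since the blocks are singletons; in case (b) the block-size formula $|B|=|L^{\GaL_1}:G(\omega)|\,q(q-1)/2$ of Theorem~\ref{thmintro:2bbtrans}(b) with $|L^{\GaL_1}:G(\omega)|=2$ and $|G|=|\PGL_3(q)|$ yields $|\Omega^{[2]}|=(q^2+q+1)(q^2+q)q^2(q-1)^2=|G|$, so $2$-by-block-transitivity automatically becomes regular; in case (c) the six listed actions are exactly the rows of Table~\ref{table:exceptional} whose distant-pair stabilizer column equals $\triv$.

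For the \emph{only if} direction I rewrite the hypothesis as ``$G$ is $2$-by-block-transitive on $\Omega$ with $G(\omega,\omega')=\triv$ for every distant pair'', and reduce to the block-faithful setting in which Theorem~\ref{thmintro:2bbtrans} operates: a nontrivial $K=\ker\pi_0$ would lie inside $G([\omega_1],[\omega_2])$ and act freely on $[\omega_1]\times[\omega_2]$ by regularity, while $G/K$ acts $2$-transitively on $\Omega_0$, and these constraints together with faithfulness on $\Omega^{[2]}$ force $K=\triv$. Theorem~\ref{thmintro:2bbtrans} then places $G$ in one of its cases (a)--(d). Cases (a) and (c) of that theorem are discarded by showing $G(\omega,\omega')\ne\triv$: in (a), $G(\omega)$ contains the soluble residual of the parabolic $G([\omega])$, so $G(\omega,\omega')$ contains the intersection of two such soluble residuals inside $\PGL_{n+1}(q)$, and a Levi computation yields a diagonal torus of order $q-1\ge 2$ (plus an $\SL_{n-1}(q)$ piece when $n\ge 3$), with the borderline value $(n,q)=(2,3)$ handled by a direct order count; in (c) the inclusions $G([\omega])\cap S\subseteq N\subseteq G(\omega)$ and their symmetric counterpart at $\omega'$ place the two-block socle torus $G([\omega],[\omega'])\cap S$ inside $G(\omega,\omega')$, once more nontrivially.

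Case (b) of Theorem~\ref{thmintro:2bbtrans} is the main obstacle. Combining $|B|=|L^{\GaL_1}:G(\omega)|\,q(q-1)/2$ (with $|L^{\GaL_1}:G(\omega)|\in\{1,2\}$) and the sharp identity $|G|=|\Omega^{[2]}|=(q^2+q+1)(q^2+q)|B|^2$, the choice $|L^{\GaL_1}:G(\omega)|=1$ would give $|G|=|\PGL_3(q)|/4$, which is strictly less than $|\PSL_3(q)|$ since $|\PGL_3(q)|/|\PSL_3(q)|=\gcd(3,q-1)\le 3$, contradicting $\PSL_3(q)\le G$; hence $|L^{\GaL_1}:G(\omega)|=2$ is forced, and then $|G|=|\PGL_3(q)|$ exactly, matching the statement. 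Case (d) is an inspection of Table~\ref{table:exceptional}: the six rows with trivial distant-pair stabilizer are precisely the exceptional actions listed under (c) of the sharp theorem, yielding $q\in\{5,11,23,29,59\}$.
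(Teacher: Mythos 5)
Your overall architecture matches the paper's: reduce to the setting of Theorem~\ref{thmintro:2bbtrans}, eliminate its cases (a) and (c), and characterize sharpness in cases (b) and (d). Your handling of (b), (c) and (d) is correct and genuinely different from the paper's in a pleasant way: for case (b) you replace the final clause of Proposition~\ref{prop:psl:plane-field} by the direct count $|\Omega^{[2]}| = |\PGL_3(q)|\,d^2/4$ with $d = |L^{\GaL_1}:G(\omega)|$, which forces $d=2$ and $|G|=|\PGL_3(q)|$; and for Theorem~\ref{thmintro:2bbtrans}(c) you observe that the socle's two-point stabilizer on blocks, a nontrivial torus, lies in $N \cap N' \le G(\omega,\omega')$, which is simpler than the arithmetic of Corollary~\ref{cor:LDC:rank_one:sharp}.

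However, your elimination of Theorem~\ref{thmintro:2bbtrans}(a) contains a genuine error. The intersection of the soluble residuals of two point stabilizers of $P_n(q)$ does \emph{not} contain a diagonal torus of order $q-1$: with $M = W \rtimes Z\SL_n(q)$ the relevant group, Lemma~\ref{lem:psl_2pt}(ii) gives $|M \cap sMs : Z| = q^{2(n-1)}\mathrm{gcd}(q-1,n+1)|\SL_{n-1}(q)|$, so the toral contribution has order only $\mathrm{gcd}(q-1,n+1)$, which is trivial whenever $n=2$ and $q \not\equiv 1 \mod 3$. Concretely, for the PD action of $\PSL_3(5)$ with $|B|=4$ the distant-pair stabilizer is the purely unipotent group $W_0 \times W_1$ of order $25$; there is no torus and no $\SL_{n-1}$ piece, so your argument as written yields nothing in exactly this range. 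The fix is the paper's: the unipotent intersection $W_0 \times W_1$ of order $q^{2(n-1)}>1$ always survives (Lemma~\ref{lem:psl_2pt}(ii),(iii)). Separately, your reduction to the block-faithful setting is asserted rather than proved, and the assertion is false from the stated hypotheses: $\Sym(4)$ acting on the six edges of $K_4$, with blocks the three pairs of opposite edges, is faithful and regular on $\Omega^{[2]}$ yet has kernel $C_2^2$ on blocks, so faithfulness on $\Omega^{[2]}$ does not force $K=\triv$. The paper's own proof makes the same silent reduction to Theorem~\ref{thmintro:2bbtrans}, so block-faithfulness really belongs among the hypotheses; but since you explicitly claim to derive it, that step needs either a proof or the added assumption.
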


We conclude this introduction with some remarks on the main theorems and their context in the literature.

\begin{rem} \
\begin{enumerate}[(1)]
\item
The author's original motivation for classifying finite block-faithful $2$-by-block-transitive actions is an application to groups acting on infinite locally finite trees.  Specifically, if $T$ is a locally finite tree and the closed subgroup $G \le \Aut(T)$ acts $2$-transitively on the space of ends $\partial T$ of $T$, it is not hard to see that each vertex stabilizer $G(v)$ acts $2$-by-block-transitively on $\partial T$, where the blocks correspond to the neighbours of $v$ in $T$; since there are finitely many blocks, one obtains a quotient $2$-by-block-transitive action on a finite set.  This application to groups acting on trees is developed in a separate article (\cite{Reid}).

\item
By taking $G$ up to isomorphism as a permutation group, in the cases that $G$ has socle $\PSL_{n+1}(q)$ for $n \ge 2$, we are effectively ignoring the distinction between the set $P_n(q)$ of lines in $\Fb^{n+1}_q$ and the set $P^*_n(q)$ of $n$-dimensional subspaces of $\Fb^{n+1}_q$, which yield isomorphic permutation groups (via the inverse transpose automorphism of $G$) but not equivalent $G$-sets (in other words, the corresponding point stabilizers are not conjugate in $G$).

\item
A previous classification theorem that served as an inspiration for Theorem~\ref{thmintro:2bbtrans} is the classification by Devillers, Giudici, Li, Pearce and Praeger (\cite[Theorem~1.2]{Rank3}) of the finite quasiprimitive imprimitive rank $3$ permutation groups (where the \defbold{rank} is the number of orbits of a point stabilizer).  Any such group is easily seen to be $2$-by-block-transitive and block-faithful; the three orbits of $G(\omega)$ are $\{\omega\}$, $[\omega] \setminus \{\omega\}$ and $\Omega \setminus [\omega]$, so naturally, the classification in \cite{Rank3} describes a special case of the actions given in Theorem~\ref{thmintro:2bbtrans}, namely the case when $G([\omega])$ acts $2$-transitively on $G([\omega])/G(\omega)$ (which is automatically the case if $|B|=2$, but not otherwise; indeed, $G(\omega)$ need not even be maximal in $G([\omega])$).  Specifically, the groups in \cite[Table 1]{Rank3} relate to Theorem~\ref{thmintro:2bbtrans} as follows:
\begin{enumerate}[(a)]
\item Case (a) of Theorem~\ref{thmintro:2bbtrans} includes line 3 of \cite[Table 1]{Rank3}, but extra conditions are required to ensure the action has rank $3$.
\item Most actions in case (b) of Theorem~\ref{thmintro:2bbtrans} do not have rank $3$, but there are five exceptions, all with $G(\omega) = L^{\GaL_1}$.  Here $G$ is one of $\PGL_3(4), \PGaL_3(4), \PGaL_3(8), \PSL_3(2),\PSL_3(3)$, listed in lines 4, 5, 8, 9, 10 of \cite[Table 1]{Rank3}.
\item Case (c) of Theorem~\ref{thmintro:2bbtrans} includes line 2 of \cite[Table 1]{Rank3}, but a rank $3$ action only arises if $|B|=2$, which is limited to actions with socle $\PSL_2(q)$.  By contrast, the other types of socle give examples of $2$-by-block-transitive actions with odd block size $n$ and rank $n+1$, see Example~\ref{ex:ldc} below.
\item The first three lines of Table~\ref{table:exceptional} have rank $3$, and are listed in lines 1, 7 and 6 of \cite[Table 1]{Rank3}.
\end{enumerate}

\item
The first line of Table~\ref{table:exceptional} is a notable `near miss' for an example of a finite block-faithful $3$-by-block-transitive action with nontrivial blocks: as well as being $2$-by-block-transitive and $4$-transitive on blocks, it has only two orbits on distant triples (see Lemma~\ref{lem:no_ldc:Mathieu}).
\end{enumerate}
\end{rem}

\paragraph{Structure of article}

The remainder of the article is divided into two sections.  In Section 2 we show some basic properties of $k$-by-block-transitive actions and recall the necessary information about the classification of finite $2$-transitive permutation groups.  The main section is Section 3, where we work through the classification of finite block-faithful $2$-by-block-transitive permutation groups on a case-by-case basis.

\paragraph{Acknowledgement} I thank Tom De Medts, Michael Giudici, Bernhard M\"{u}hlherr and Hendrik Van Maldeghem for helpful comments related to this article.  I also thank the anonymous referee who suggested a number of useful references and improvements.

\section{Preliminaries}

\subsection{Generalities on $2$-by-block-transitive groups}

Here we note some general properties of $2$-by-block-transitive groups.

The first thing to note is the following double coset formula for $2$-by-block-transitive action.  Throughout, we write $[\omega]$ for the $\sim$-class of $\omega \in \Omega$.

\begin{lem}\label{lem:double_coset}
Let $\Omega$ be a set, let $G \le \Sym(\Omega)$ be a transitive group preserving the nonuniversal equivalence relation $\sim$ and let $\omega \in \Omega$.  Then $G$ is $2$-by-block-transitive if and only if the following equation is satisfied for some (or equivalently for all) $g \in G \setminus G([\omega])$:
\[
G = G(\omega)gG(\omega) \sqcup G([\omega]).
\]
\end{lem}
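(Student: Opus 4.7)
The plan is to translate the $2$-by-block-transitivity condition into a statement about a single $G(\omega)$-orbit on $\Omega \setminus [\omega]$, and then re-express this via the standard bijection between $G(\omega)$-orbits on $\Omega$ and $(G(\omega),G(\omega))$-double cosets in $G$.

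First I would show the equivalence: $G$ is $2$-by-block-transitive if and only if $G(\omega)$ acts transitively on $\Omega \setminus [\omega]$. The forward direction is immediate, since for any $\omega', \omega'' \in \Omega \setminus [\omega]$ the tuples $(\omega, \omega')$ and $(\omega, \omega'')$ lie in $\Omega^{[2]}$, so transitivity on $\Omega^{[2]}$ yields an element of $G(\omega)$ sending $\omega'$ to $\omega''$. For the reverse direction, given $(\omega_1, \omega_1'), (\omega_2, \omega_2') \in \Omega^{[2]}$, I would use transitivity of $G$ on $\Omega$ to choose $g_i$ with $g_i \omega = \omega_i$, observe that $g_i^{-1} \omega_i' \in \Omega \setminus [\omega]$, and then apply transitivity of $G(\omega)$ on $\Omega \setminus [\omega]$ to produce an element of $G(\omega)$ linking the two preimages; conjugating gives the required element of $G$.

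Second, I would invoke the standard orbit–double-coset correspondence: the map $G(\omega) g G(\omega) \mapsto G(\omega) g \omega$ is a bijection between $(G(\omega),G(\omega))$-double cosets in $G$ and $G(\omega)$-orbits on $\Omega$. Because $G$ preserves $\sim$, we have $g \omega \in [\omega]$ if and only if $g \in G([\omega])$; consequently the double cosets contained in $G([\omega])$ correspond exactly to the $G(\omega)$-orbits inside $[\omega]$. In particular, for any $g \notin G([\omega])$ the entire double coset $G(\omega) g G(\omega)$ is disjoint from $G([\omega])$, so the disjointness in $G = G(\omega) g G(\omega) \sqcup G([\omega])$ is automatic.

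Finally, I would combine the two steps: $G(\omega)$ is transitive on $\Omega \setminus [\omega]$ if and only if $\Omega \setminus [\omega]$ is a single $G(\omega)$-orbit, equivalently (via the correspondence and the automatic disjointness) $G \setminus G([\omega])$ is a single $(G(\omega),G(\omega))$-double coset, which is exactly the stated condition $G(\omega) g G(\omega) = G \setminus G([\omega])$ for every $g \in G \setminus G([\omega])$. No serious obstacle is expected; the only thing to take care with is the correct interpretation of the $\sqcup$ symbol and checking that the disjointness in that decomposition is forced by $g \notin G([\omega])$ rather than an additional hypothesis.
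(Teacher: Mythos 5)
Your proposal is correct and follows essentially the same route as the paper's (much terser) proof: both reduce $2$-by-block-transitivity to transitivity of $G(\omega)$ on $\Omega \setminus [\omega]$ and then translate that into the double coset equation via the standard orbit--double-coset correspondence, with the disjointness forced by $G$ preserving $\sim$. No gaps.
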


\begin{proof}
Since $G$ acts transitively on $\Omega$, there is a bijection $\theta$ from the set of $(G(\omega),G(\omega))$-double cosets in $G$ to the set of $G(\omega)$-orbits on $\Omega$, where given $x \in G$ we set 
\[
\theta(G(\omega)xG(\omega)) = \{h(x\omega) \mid h \in G(\omega)\}.
\]
Note that $G([\omega])$ is itself a union of $(G(\omega),G(\omega))$-double cosets, and the $G([\omega])$-orbit of $\omega$ is exactly $[\omega]$; thus $\bigcup\{\theta(G(\omega)xG(\omega)) \mid x \in G([\omega])\}$ is the union of all $G(\omega)$-orbits on $[\omega]$.

In particular, the double coset equation in the statement is equivalent to the assertion that $G(\omega)$ acts transitively on $\Omega \setminus [\omega]$.  In turn, we see that $G(\omega)$ acts transitively on $\Omega \setminus [\omega]$ if and only if $G$ acts transitively on distant pairs.
\end{proof}

If $G$ is finite, we obtain a formula for the order of the stabilizer of a distant pair.

\begin{cor}\label{cor:double_coset}
Let $\Omega$ be a set and let $G$ be a finite group acting transitively on $\Omega$ and preserving the equivalence relation $\sim$.  Let $(\omega,\omega') \in \Omega^{[2]}$.  Then $G$ acts $2$-by-block-transitively on $\Omega$ if and only if
\[
|G(\omega,\omega')| = \frac{|G(\omega)|^2}{|G| - |G([\omega])|}.
\]
In particular, if $G$ acts $2$-by-block-transitively on $\Omega$ then the right-hand side of the above equation is an integer.
\end{cor}

\begin{proof}
Since $G$ is transitive we can write $\omega' = g\omega$ for some $g \in G$; note that $g \not\in G([\omega])$.  We can calculate the size of the double coset $G(\omega)gG(\omega)$ as
\[
|G(\omega)gG(\omega)| = |G(\omega)gG(\omega)g\inv| =  |G(\omega)G(\omega')| =  \frac{|G(\omega)||G(\omega')|}{|G(\omega) \cap G(\omega')|} =  \frac{|G(\omega)|^2}{|G(\omega,\omega')|},
\]
so the equation in the statement is equivalent to the equation
\[
|G(\omega)gG(\omega)| = |G| - |G([\omega])|.
\]
By Lemma~\ref{lem:double_coset}, the latter equation is equivalent to $G$ acting $2$-by-block-transitively on $\Omega$.
\end{proof}

Using the standard correspondence between systems of imprimitivity of a transitive permutation group and subgroups containing a point stabilizer, we deduce the following.

\begin{lem}\label{lem:unique_blocks}
Let $\Omega$ be a set and let $G \le \Sym(\Omega)$ be $k$-by-block-transitive on $\Omega$ relative to the equivalence relation $\sim$, for $k \ge 2$.  Then for each $\omega \in \Omega$, the block stabilizer $G([\omega])$ is the largest proper subgroup of $G$ containing $G(\omega)$.  Equivalently, $\sim$ is the coarsest nonuniversal $G$-invariant equivalence relation.  In particular, we can recover $\sim$ from the action of $G$ on $\Omega$.
\end{lem}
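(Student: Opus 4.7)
The plan is to reduce the lemma to the standard bijection between the $G$-invariant equivalence relations on the transitive $G$-set $\Omega$ and the subgroups $H$ of $G$ with $G(\omega) \le H \le G$, where $H$ corresponds to the relation whose block containing $\omega$ is $H\omega$. Under this bijection the relation $\sim$ corresponds to $G([\omega])$ and the universal relation corresponds to $H = G$, so the two formulations of the lemma are equivalent: $\sim$ is the coarsest nonuniversal $G$-invariant equivalence relation if and only if $G([\omega])$ is the unique maximal proper subgroup of $G$ containing $G(\omega)$. Thus I only need to establish the subgroup version.

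For that, I would take any subgroup $H$ with $G([\omega]) \le H \lneq G$ and consider the block $B := H\omega$, aiming to show $B = [\omega]$. Since $H \supseteq G([\omega])$ we have $[\omega] \subseteq B$, and since $H \lneq G$ we have $B \lneq \Omega$. Suppose for contradiction that $B$ contains some $\omega' \notin [\omega]$. Because $G(\omega) \le H$, the set $B$ is $G(\omega)$-invariant. By $2$-by-block-transitivity (available since $k \ge 2$), the point stabilizer $G(\omega)$ acts transitively on $\Omega \setminus [\omega]$ — this is the same observation used in Lemma~\ref{lem:double_coset}. Hence $G(\omega)\omega' = \Omega \setminus [\omega]$, which forces $\Omega \setminus [\omega] \subseteq B$ and therefore $B = \Omega$, contradicting $B \lneq \Omega$. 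So $B = [\omega]$, i.e.\ $H\omega = G([\omega])\omega$, and combined with $G([\omega]) \le H$ this gives $H = G([\omega])$.

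There is no real obstacle here beyond cleanly invoking the standard imprimitivity correspondence and the fact that $2$-by-block-transitivity is equivalent to transitivity of a point stabilizer on the complement of its $\sim$-class; both of these are routine. The final sentence of the lemma (that $\sim$ is recoverable from the action) is then immediate, since $\sim$ is characterized intrinsically as the coarsest nonuniversal $G$-invariant equivalence relation on $\Omega$.
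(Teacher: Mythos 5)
There is a genuine gap: your argument establishes only that $G([\omega])$ is a \emph{maximal} subgroup of $G$, not that it is the \emph{largest} proper subgroup containing $G(\omega)$, which is what the lemma asserts (equivalently, that \emph{every} proper subgroup containing $G(\omega)$ lies inside $G([\omega])$, so that $G([\omega])$ is the unique maximal one). You only ever consider subgroups $H$ with $G([\omega]) \le H \lneq G$ and show $H = G([\omega])$; this says nothing about a hypothetical proper subgroup $H$ with $G(\omega) \le H$ that neither contains nor is contained in $G([\omega])$. Such an $H$ would correspond, under your own dictionary, to a nonuniversal $G$-invariant relation not refining $\sim$, so $\sim$ would fail to be coarsest. (Note also that in the possibly infinite setting of the lemma, ``unique maximal proper subgroup containing $G(\omega)$'' is in any case weaker than ``largest'', since a proper subgroup need not lie in any maximal one.)

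The gap is exactly at the point where you invoke $[\omega] \subseteq B$: that inclusion came from the unnecessary hypothesis $G([\omega]) \le H$. The repair, which is the paper's argument, is to take an arbitrary $H$ with $G(\omega) \le H$ and $H \nleq G([\omega])$, pick $g \in H \setminus G([\omega])$, and set $B = H\omega$. Your observation that $G(\omega)$ is transitive on $\Omega \setminus [\omega]$ still gives $\Omega \setminus [\omega] \subseteq B$; then, since $B$ is $H$-invariant and $g \in H$, also $g(\Omega \setminus [\omega]) = \Omega \setminus [g\omega] \subseteq B$, and since $[\omega] \cap [g\omega] = \emptyset$ these two sets cover $\Omega$, forcing $B = \Omega$ and hence $H = G$ (as $H$ is transitive and contains $G(\omega)$). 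With that extra translation step your proof becomes essentially the paper's. A minor further point: you take the $G$-invariance of $\sim$ (needed for the imprimitivity correspondence and for $G([\omega])$ to behave as a block stabilizer) as given, whereas the paper's proof derives it from $k$-by-block-transitivity as a first step; depending on which formulation of the definition one adopts, you should either cite that as part of the definition or supply the short argument.
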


\begin{proof}
Since $G$ is $k$-by-block-transitive, certainly $G$ is transitive and there are $k \ge 2$ blocks, so $\sim$ cannot be the universal relation.  We see that $\sim$ is $G$-invariant because if it were not, say $x \sim y$ but $gx \not\sim gy$, then we would have a distant $k$-tuple $(gx,gy,x_3,\dots,x_k)$ in the same $G$-orbit as the nondistant $k$-tuple $(x,y,g\inv x_3,\dots,g\inv x_k)$.  In particular, we have $G(\omega) \le G([\omega]) < G$.

Suppose now that $H$ is some subgroup of $G$ containing $G(\omega)$, and suppose that $H \nleq G([\omega])$, say $g \in H \setminus G([\omega])$; let $O$ be the $H$-orbit of $g\omega$.  Then $g\omega \in \Omega \setminus [\omega]$; since $H \ge G(\omega)$ and $G(\omega)$ acts transitively on $\Omega \setminus [\omega]$ it follows that $\Omega \setminus [\omega] \subseteq O$, in particular $[g\omega] \subseteq O$.  But now also $g\inv[g\omega] = [\omega] \subseteq O$, so in fact $O = \Omega$, that is, $H$ is transitive on $\Omega$.  Since $G(\omega) \le H$, we conclude that $H = G$.

From the previous paragraph, we deduce that $\sim$ is the coarsest $G$-invariant equivalence relation other than the universal relation.
\end{proof}

Here is a special case, which rules out an affine type action on the set of blocks as soon as the blocks are nontrivial.

\begin{cor}\label{cor:LDC_normal}
Let $\Omega$ be a set, let $G \le \Sym(\Omega)$ be block-faithful and $k$-by-block-transitive on $\Omega$ relative to the nontrivial equivalence relation $\sim$, for $k \ge 2$.  Let $N$ be a nontrivial normal subgroup of $G$.  Then $G = NG(\omega)$.  In particular, $N$ is not abelian.
\end{cor}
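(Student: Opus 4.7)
The plan is a two-step argument, both parts leveraging Lemma~\ref{lem:unique_blocks}. First, to establish $G = NG(\omega)$: since $N$ is normal, the product $NG(\omega)$ is a subgroup of $G$ containing $G(\omega)$, and by Lemma~\ref{lem:unique_blocks} any such subgroup is either equal to $G$ or contained in $G([\omega])$. In the latter case $N \le G([\omega])$, and since $N$ is normal in $G$ this forces $N \le \bigcap_{g \in G} gG([\omega])g^{-1}$, i.e., $N$ lies in the kernel of the action on $\Omega_0$; block-faithfulness then gives $N = 1$, contrary to hypothesis. Hence $G = NG(\omega)$.

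For the ``in particular'' claim I would argue by contradiction, assuming $N$ is abelian. The identity $G = NG(\omega)$ gives $N\omega = G\omega = \Omega$, so $N$ is transitive on $\Omega$; combined with being abelian, $N$ must then act regularly on $\Omega$ (any $n \in N$ fixing $\omega$ commutes with the transitive group $N$ and thereby fixes every point).

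The decisive move is to examine $M := N \cap G([\omega])$, the setwise stabilizer of $[\omega]$ under $N$. One first checks $M \trianglelefteq G$: $N$ normalizes $M$ because $N$ is abelian, while $G(\omega)$ normalizes $N$ (as $N \trianglelefteq G$) and also normalizes $G([\omega])$ (as $G(\omega) \le G([\omega])$), and hence normalizes $M$; since $G = NG(\omega)$ the claim follows. But $M \le G([\omega])$, together with block-faithfulness (the core of $G([\omega])$ in $G$ is trivial), forces $M = 1$. Since $|B| \ge 2$ there is some $\omega' \in [\omega] \setminus \{\omega\}$; regularity of $N$ supplies a unique $n \in N$ with $n\omega = \omega'$, and then $n \cdot [\omega] \cap [\omega] \ne \emptyset$ forces $n \cdot [\omega] = [\omega]$, putting $n \in M \setminus \{1\}$, a contradiction. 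I foresee no serious obstacle: both steps reduce to Lemma~\ref{lem:unique_blocks} plus the fact that block-faithfulness kills any nontrivial normal subgroup of $G$ sitting inside $G([\omega])$; the one point that takes a moment of care is verifying $M \trianglelefteq G$ rather than merely $M \trianglelefteq N$, which follows precisely because $G(\omega)$ already normalizes both $N$ and $G([\omega])$.
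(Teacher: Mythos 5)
Your proof is correct, and the first half ($G = NG(\omega)$) is essentially the paper's argument: both reduce to Lemma~\ref{lem:unique_blocks}, the only cosmetic difference being that the paper rules out $N \le G([\omega])$ by noting that a nontrivial normal subgroup of the ($2$-transitive, hence primitive) block action is transitive on blocks, whereas you rule it out via triviality of the core of $G([\omega])$ — equivalent uses of block-faithfulness. The second half is where you take a genuinely different, slightly longer route. The paper applies ``transitive abelian implies regular'' to the action of $N$ on the \emph{block set} $\Omega_0$: there $N \cap G([\omega])$ is literally a point stabilizer of a regular action, hence trivial, while the factorization $G = NG(\omega)$ together with $G(\omega) < G([\omega])$ forces $N \cap G([\omega])$ to be nontrivial (any $g \in G([\omega]) \setminus G(\omega)$ written as $nh$ with $n \in N$, $h \in G(\omega)$ has $n \in N \cap G([\omega]) \setminus \{1\}$). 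You instead apply regularity to the action of $N$ on $\Omega$ itself, which obliges you to prove separately that $M = N \cap G([\omega])$ is normal in $G$ before block-faithfulness can kill it, and to manufacture a nontrivial element of $M$ from a point $\omega' \in [\omega]\setminus\{\omega\}$. All of these steps check out (in particular $G(\omega)$ does normalize $G([\omega])$, since conjugating by an element fixing $\omega$ sends the setwise stabilizer of $[\omega]$ to itself), so the argument is sound; the paper's version simply avoids the normality verification by working one level up, on the blocks, where the relevant stabilizer vanishes for free.
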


\begin{proof}
Since $G$ acts $k$-transitively on the set of blocks, we see that $N$ acts transitively on the set of blocks, so $N \nleq G([\omega])$.  It then follows by Lemma~\ref{lem:unique_blocks} that $G = NG(\omega)$.  Since $\sim$ is nontrivial we have $G(\omega) < G([\omega])$, so $N \cap G([\omega])$ is nontrivial.  Since $N$ acts faithfully on the set of blocks we deduce that $N$ is not abelian.
\end{proof}

We observe the following property of $2$-by-block-transitive actions, which will be used without further comment.

\begin{cor}\label{cor:LDC_2pt:maximal}
Let $G$ be a group and let $L < K < G$ be proper subgroups.  If the action of $G$ on $G/L$ is $2$-by-block-transitive with block stabilizer $M \ge L$, then $M \ge K$ and the action on $G/K$ is $2$-by-block-transitive with block stabilizer $M$.
\end{cor}

\begin{proof}
Suppose $G$ has $2$-by-block-transitive action on $\Omega = G/L$ and write $\omega$ for the trivial coset.  Then $M = G([\omega])$; by Lemma~\ref{lem:unique_blocks}, since $L < K < G$ we have $K \le M$.  Let $g \in G \setminus M$.  By Lemma~\ref{lem:double_coset} we have $G = LgL \cup M$; it then follows that $G = KgK \cup M$, so the action of $G$ on $G/K$ is $2$-by-block-transitive by Lemma~\ref{lem:double_coset}.
\end{proof}

Given a group $G$, we say the $G$-set $\Omega$ is an \defbold{extension} of the $G$-set $\Omega_0$ if there is a $G$-equivariant surjection $\pi$ from $\Omega$ to $\Omega_0$.  The fibres of $\pi$ then form a system of imprimitivity for $G$ acting on $\Omega$.

Given a $k$-by-block-transitive permutation group $G \le \Sym(\Omega)$, we have an associated $k$-transitive action (possibly also $k'$-transitive for some $k' > k$) $\pi:G \rightarrow \Sym(\Omega/\sim)$.  We say $G$ is \defbold{block-faithful} if $G$ acts faithfully on $\Omega/\sim$.  Thus every block-faithful $k$-by-block-transitive action is an extension of a faithful $k$-transitive action of the same group.

The next lemma gives a necessary and sufficient condition for a block-faithful extension of a $2$-transitive action to be $2$-by-block-transitive.

\begin{lem}\label{lem:LDC_2pt}
Let $\Omega$ be a set and let $G$ be a group acting on $\Omega$ that preserves the nonuniversal equivalence relation $\sim$, acts faithfully and $2$-transitively on $\Omega/\sim$ and acts transitively on $\Omega$.
\begin{enumerate}[(i)]
\item Given $\omega,\omega' \in \Omega$ with $\omega \not\sim \omega'$, then there is an involution $s \in G$ such that $s[\omega] = [\omega']$, and such that
\[
G(\{[\omega],[\omega']\}) = G([\omega],[\omega']) \rtimes \grp{s}.
\]
\item Let $\omega \in \Omega$, let $L = G(\omega)$ and let $s \in G$ be an involution such that $s\omega \not\sim \omega$. Then $G$ has $2$-by-block-transitive action on $\Omega$ if and only if 
\begin{equation}\label{eq:lem:LDC_2pt_a}
G([\omega]) = LsL([s\omega])s.
\end{equation}
In particular, if $G$ has $2$-by-block-transitive action on $\Omega$ then
\begin{equation}\label{eq:lem:LDC_2pt_b}
G([\omega],[s\omega]) = L([s\omega])sL([s\omega])s;
\end{equation}
if in addition $G$ is finite, then
\begin{equation}\label{eq:lem:LDC_2pt_c}
|G([\omega],[s\omega]):L([s\omega])| = |G([\omega]):L|.
\end{equation}
\end{enumerate}
\end{lem}

\begin{proof}
(i)
Since $G$ acts $2$-transitively on $\Omega/\sim$, there is an element swapping two blocks, so $G$ has an element of even order, and hence an element $s$ of order $2$.  Since $G$ acts faithfully on $\Omega/\sim$, there is some block not fixed by $s$, say $s[\omega_0] = s[\omega'_0]$ where $\omega_0,\omega'_0 \in \Omega$ such that $\omega_0 \not\sim \omega'_0$.  Since $G$ is $2$-transitive on $\Omega/\sim$, after conjugating $s$ by a suitable element of $G$, we can in fact take $[\omega_0]=[\omega]$ and $[\omega'_0] = [\omega']$.  The form taken by $G(\{[\omega],[\omega']\})$ is now clear.

(ii)
We have $G([\omega]) = LsL([s\omega])s$ if and only if $L$ acts transitively on the coset space $X = G([\omega])/sL([s\omega])s$.  As a $G([\omega])$-space, $X$ is equivalent to the $G([\omega])$-orbit $Y$ of $s\omega$, since 
\[
sL([s\omega])s = sLs \cap G([\omega]).
\]
Since $G$ acts $2$-transitively on blocks, we see that $Y$ intersects every block other than $[\omega]$.

If $G$ is $2$-by-block-transitive, then $Y = \Omega \setminus [\omega]$ and $L$ acts transitively on $\Omega \setminus [\omega]$, so we deduce (\ref{eq:lem:LDC_2pt_a}) from the previous paragraph, and then (\ref{eq:lem:LDC_2pt_b}) follows from (\ref{eq:lem:LDC_2pt_a}) by intersecting both sides with the group $G([s\omega])$.  If $G$ is finite then (\ref{eq:lem:LDC_2pt_c}) follows from (\ref{eq:lem:LDC_2pt_b}) and the fact that $L$ acts transitively on $(\Omega/\sim) \setminus \{[\omega]\}$, so that $|L:L([s\omega])| = |G([\omega]):G([\omega],[s\omega])|$.

Conversely, assume (\ref{eq:lem:LDC_2pt_a}).  Then $L$ acts transitively on $Y$, so $L$ acts transitively on $(\Omega/\sim) \setminus \{[\omega]\}$.  We can write the last statement as a double coset equation, using the fact that $[s\omega] \neq [\omega]$:
\[
G = LsG([\omega]) \sqcup G([\omega]);
\]
equivalently,
\[
G = G([\omega])sL \sqcup G([\omega]),
\]
which means that $G([\omega])$ has only two orbits on $\Omega$, one of which is $\Omega \setminus [\omega]$.  Thus by (\ref{eq:lem:LDC_2pt_a}), in fact $L$ acts transitively on $\Omega \setminus [\omega]$; hence $G$ acts transitively on distant pairs.
\end{proof}

Suppose now we have a $2$-transitive action of $G$ on the set $\Omega$, with $2$-by-block transitive action on $G/L$ for $L \le G(\omega)$.  In classifying the $2$-by-block-transitive extensions of the action on $\Omega$, it is sometimes convenient to count candidates for $L$ up to $G(\omega)$-conjugacy, rather than up to $G$-conjugacy.  The next lemma shows that these two ways of counting extensions are equivalent, if we exclude the degenerate case when $|\Omega| \le 2$.

\begin{lem}
Let $\Omega$ be a set with at least $3$ elements, let $G$ be a group acting $2$-transitively on $\Omega$, and let $L \le G(\omega)$ be such that $G$ acts $2$-by-block-transitively on $G/L$.  Then for all $g \in G$ we have $gLg\inv \le G(\omega)$ if and only if $g \in G(\omega)$.
\end{lem}

\begin{proof}
If $g \in G(\omega)$, then clearly $gLg\inv \le G(\omega)$.

Conversely, suppose $g \in G$ is such that $gLg\inv \le G(\omega)$.  Then by Lemma~\ref{lem:unique_blocks}, $gG(\omega)g\inv$ is the largest proper subgroup of $G$ that contains $gLg\inv$, so $G(\omega) \le gG(\omega)g\inv$.  Since $G$ acts $2$-transitively on $\Omega$, the subgroup $G(\omega)$ is already maximal among proper subgroups, so we must have $G(\omega) = gG(\omega)g\inv$, that is, $g \in \N_G(G(\omega))$.  There are two possibilities for the normalizer: either $\N_G(G(\omega)) = G(\omega)$ or $\N_G(G(\omega)) = G$.  In the former case we are done.  In the latter case, $G(\omega)$ fixes $\Omega$ pointwise, which is incompatible with $2$-transitivity given the assumption that $|\Omega| \ge 3$.
\end{proof}

\begin{rem}
Based on Lemma~\ref{lem:double_coset} one can classify the $2$-by-block-transitive actions of a given finite group $G$ by the following simple (but not particularly efficient) procedure:
\begin{enumerate}[(1)]
\item Determine a set $\mc{M}$ of representatives of the conjugacy classes of subgroups $H$ of $G$ such that $G$ acts $2$-transitively on $G/H$.
\item For each $H \in \mc{M}$, fix $g_H \in G \setminus H$.
\item We produce a sequence $\mc{L}^H_i$ of sets of subgroups of $H$, starting with $\mc{L}^H_0 = \{H\}$.  For each $L \in \mc{L}^H_i$ we take representatives of the $H$-conjugacy classes occurring as maximal subgroups $M$ of $L$; then for each representative $M$, we check if $G = Mg_HM \cup H$ (for instance by calculating whether $|Mg_HM|+|H|=|G|$).  We then form a set $\mc{L}^H_{i+1}$ consisting of those $M$ in the previous sentence such that $G = Mg_HM \cup H$.
\item The previous step eventually terminates; write $\mc{L}^H = \bigcup_{i \ge 0}\mc{L}^H_i$.
\item Write $\mc{L} = \bigcup_{H \in \mc{M}}\mc{L}^H$.  Up to equivalence, the $2$-by-block-transitive actions of $G$ are given by the action on $G/L$ for $L \in \mc{L}$.
\end{enumerate}
\end{rem}

\subsection{Finite $2$-transitive permutation groups of almost simple type}

The finite $2$-transitive permutation groups are all known; an overview can be found for example in \cite[\S7.7]{DixonMortimer}.  Every such group is either of affine type or has nonabelian simple socle, and given Corollary~\ref{cor:LDC_normal} we can focus on the latter.  These groups are displayed in the next table, with the rows corresponding to the action of the socle $S$, where two actions are identified if their stabilizers belong to the same $\Aut(S)$-class.  We also indicate the degree $t$ of transitivity, the largest overgroup $N := \N_{\Sym(\Omega)}(S)$ of the socle compatible with the action and the structure of a point stabilizer of $N$; where applicable, $P$ denotes a normal subgroup of $S$ that acts regularly on $\Omega \setminus \{\omega\}$.  In all cases the socle itself acts $2$-transitively, except for the action of $\PGaL_2(8) = {^2 \mathrm{G}_2(3)}$ on $28$ points.

\begin{minipage}{1.0\textwidth}
\begin{small}
\begin{center}

\begin{tabular}{ c | c | c | c | c}

Degree & $t$ & $S$ & $N$ & $N(\omega)$ \\ \hline
$d \ge 5$ & $d-2$ or $d$ & $\Alt(d)$ & $\Sym(d)$ & $\Sym(d-1)$ \\ 
\footnote{$(n,q) \not\in \{(1,2),(1,3),(1,4)\}$}$\frac{q^{n+1}-1}{q-1}$ & $2$ or $3$ & $\PSL_{n+1}(q)$ & $\PGaL_{n+1}(q)$ & $(C^{en}_p \rtimes \GL_n(q)) \rtimes \grp{\phi}$ \\
$q^3+1,q \ge 3$ & $2$ & $\mathrm{PSU}_3(q)$ & $\mathrm{P}\Gamma\mathrm{U}_3(q)$ & $P \rtimes \grp{x,\phi}$ \\ 
$q^2+1,q = 2^{2n+1}>2$ & $2$ & $^2 \mathrm{B}_2(q)$ & $^2 \mathrm{B}_2(q) \rtimes \grp{\phi}$ & $P \rtimes \grp{x,\phi}$ \\ 
$q^3+1,q = 3^{2n+1}>3$ & $2$ & $^2 \mathrm{G}_2(q)$ & $^2 \mathrm{G}_2(q) \rtimes \grp{\phi}$ & $P \rtimes \grp{x,\phi}$  \\
$2^{2n+1} -2^{n}$ & $2$ & $\mathrm{Sp}_{2n+2}(2)$ & $\mathrm{Sp}_{2n+2}(2)$ & $\Omega^-_{2n+2}(2).C_2$ \\
$2^{2n+1} +2^{n}$ & $2$ & $\mathrm{Sp}_{2n+2}(2)$ & $\mathrm{Sp}_{2n+2}(2)$ & $\Omega^+_{2n+2}(2).C_2$ \\ \hline
$11$ & $2$ & $\PSL_2(11)$ & $\PSL_2(11)$ & $\Alt(5)$ \\
$11$ & $4$ & $\mathrm{M}_{11}$ & $\mathrm{M}_{11}$ & $\Alt(6).C_2$ \\ 
$12$ & $3$ & $\mathrm{M}_{11}$ & $\mathrm{M}_{11}$ & $\PSL_2(11)$ \\ 
$12$ & $5$ & $\mathrm{M}_{12}$ & $\mathrm{M}_{12}$ & $\mathrm{M}_{11}$ \\ 
$15$ & $2$ & $\Alt(7)$ & $\Alt(7)$ & $\PSL_3(2)$ \\ 
$22$ & $3$ & $\mathrm{M}_{22}$ & $\mathrm{M}_{22} \rtimes C_2$ & $\PSL_3(4) \rtimes C_2$ \\ 
$23$ & $4$ & $\mathrm{M}_{23}$ & $\mathrm{M}_{23}$ & $\mathrm{M}_{22}$ \\ 
$24$ & $5$ & $\mathrm{M}_{24}$ & $\mathrm{M}_{24}$ & $\mathrm{M}_{23}$ \\
$28$ & $2$ & $\PSL_2(8)$ & $\PGaL_2(8)$ & $C_9 \rtimes C_6$ \\ 
$176$ & $2$ & $\mathrm{HS}$ & $\mathrm{HS}$ & $\mathrm{PSU}_3(5)  \rtimes C_2$ \\ 
$276$ & $2$ & $\mathrm{Co}_3$ & $\mathrm{Co}_3$ & $\mathrm{McL}  \rtimes C_2$\\
\end{tabular}
\end{center}
\end{small}
\end{minipage}

\subsection{Transitive semilinear groups}

For the classification of finite block-faithful $2$-by-block-transitive actions, we will also need some aspects of the classification of finite $2$-transitive affine groups, which are summarized in the following lemma.

\begin{lem}[{Hering \cite{Hering}; see also \cite[Appendix 1]{LiebeckAffine}}]\label{lem:affine_2trans}
Let $H = V \rtimes G$ be a finite $2$-transitive affine permutation group, where $V$ is regarded as the additive group of some vector space on which $G$ acts by semilinear maps.  Then $G$ and $V$ can be taken as follows:
\begin{enumerate}[(i)]
\item $V$ is the field of order $q$, and $G \le \GaL_1(q)$;
\item $V$ is the vector space of dimension $n$ over the field of order $q$, and $\SL_n(q) \unlhd G$;
\item $V$ is the vector space of dimension $2m$ over the field of order $q$, and $\mathrm{Sp}_{2m}(q) \unlhd G$;
\item $V$ is the vector space of dimension $6$ over the field of order $q = 2^e$, and $\mathrm{G}_2(q) \unlhd G$;
\item $H$ is one of finitely many exceptional $2$-transitive groups of affine type, with $V$ being a vector space of dimension $2$, $4$ or $6$.  In each case, either $H$ is soluble or $H$ has a unique nonabelian composition factor, which is one of: $\Alt(5),\Alt(6),\Alt(7),\PSL_2(13)$.  If $V$ has dimension $2$, then $q \in \{5,7,9,11,19,23,29,59\}$.
\end{enumerate}
\end{lem}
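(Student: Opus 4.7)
The plan is to analyze the generalized Fitting subgroup $F^*(G)$, exploiting the fact that transitivity of $G$ on $V \setminus \{0\}$ forces $(|V|-1) \mid |G|$ and imposes strong constraints on the irreducible representations of its simple factors in low dimension. First I would reduce to the case where $V$ is an elementary abelian $p$-group of dimension $d$ and $G \le \GL_d(p)$ is a transitive linear group, since $2$-transitivity of $H = V \rtimes G$ on $V$ is equivalent to transitivity of the point stabilizer $G$ on $V \setminus \{0\}$, which in particular forces $V$ to be elementary abelian (the translation subgroup must be minimal normal in $H$). I would further reduce to the case that $G$ acts primitively on $V$ as an $\Fb_p$-module, by passing to block stabilizers preserving a proper subfield structure or tensor decomposition and arguing inductively.

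Next I would split into cases by the structure of $F^*(G)$. If $F^*(G)$ is abelian, then (by irreducibility and primitivity) it is cyclic and its centralizer in $\End_{\Fb_p}(V)$ generates a field $\Fb_q$ on which $V \cong \Fb_q$ is one-dimensional; then $G$ normalises this field structure and embeds in $\GaL_1(q)$, yielding case (i). If $F^*(G)$ has a nonabelian simple factor $E$, I would invoke Aschbacher's classification of subgroups of classical groups together with CFSG to identify $E$ from its minimal faithful representation dimension over $\Fb_p$: generically one finds $E \in \{\SL_n(q), \mathrm{Sp}_{2m}(q)\}$ on its natural module, yielding cases (ii) and (iii); the case $E = \mathrm{G}_2(q)$ in dimension $6$ over $\Fb_{2^e}$ gives case (iv); and a short list of alternating, sporadic, and low-rank Lie type groups with nonabelian composition factors among $\Alt(5), \Alt(6), \Alt(7), \PSL_2(13)$ contributes to case (v). Finally, if $F^*(G)$ is a nonabelian $r$-group for some prime $r$, then $F^*(G)$ must be extraspecial of order $r^{1+2k}$ acting irreducibly in dimension $r^k$ over $\Fb_p$, and the matching condition that $(|V|-1)$ divide an appropriate order leaves only $d \in \{2, 4, 6\}$ and a finite list of admissible pairs.

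The main obstacle is the case-by-case elimination of non-examples inside the CFSG framework: for each candidate simple group $E$ of a plausible minimal representation dimension, one must check whether a faithful $\Fb_p$-module $V$ exists on which $\N_{\GL(V)}(E)$ is transitive on the nonzero vectors, which reduces to delicate arithmetic matching between $|V|-1$ and the prime factorisation of $|E|$. Since the present paper only needs the enumeration as a black box, I would defer to Hering's original argument together with the tables in \cite{LiebeckAffine} for the final list of exceptional examples, rather than reproducing the computations here.
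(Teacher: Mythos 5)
This lemma is stated in the paper purely as a citation of Hering's theorem (with Liebeck's appendix as a secondary reference); the paper contains no proof of it, and the result is used strictly as a black box. Your proposal is therefore not really comparable to anything in the paper: it is an outline of the proof that exists in the literature. As an outline it is broadly accurate --- the reduction from $2$-transitivity of $V \rtimes G$ to transitivity of $G \le \GL_d(p)$ on $V \setminus \{0\}$, the elimination of imprimitive and extension-field decompositions, the trichotomy on $F^*(G)$ (cyclic, giving $\GaL_1(q)$; quasisimple, handled via minimal faithful module dimensions against the arithmetic constraint $(|V|-1) \mid |G|$; extraspecial normalizer, giving soluble and small exceptional examples) is exactly the architecture of Hering's and Liebeck's arguments. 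Two small cautions: first, the dimension-$6$ exceptional examples (those with composition factor $\PSL_2(13)$, sitting inside $\mathrm{Sp}_6(3)$) do not arise from the extraspecial branch, since $6$ is not a prime power of the form $r^k$ for an extraspecial group of order $r^{1+2k}$; they belong to the quasisimple branch. Second, and more importantly, the entire mathematical content of the theorem lives in the case-by-case arithmetic elimination that you explicitly defer to \cite{Hering} and \cite{LiebeckAffine}, so your text should be read as a roadmap to the citation rather than as a proof. Since the paper itself treats the lemma the same way, this is an acceptable stance, but it would be clearer to state up front that no self-contained proof is being offered.
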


\section{$2$-by-block-transitive groups}\label{sec:LDC}

In this section we will obtain a classification of the block-faithful $k$-by-block-transitive actions of finite groups, as promised in the introduction.  For the most part, the proof will be via case analysis of the finite $2$-transitive permutation groups.

Let us set some notation for linear and projective spaces that will be used throughout this section.

\begin{defn}
Let $p$ be a prime and $q = p^e$ for some positive integer $e$.  We give the vector space $V= \Fb^{n+1}_q$ a standard basis $\{v_0,\dots,v_n\}$.  Given a subset $X$ of $V$, write $\grp{X}_q$ for the $\Fb_q$-subspace of $V$ generated by $X$ and write $\alpha_i = \grp{v_i}_q$.  We write $P_n(q)$ for the set of lines in $\Fb^{n+1}_q$.  Write $\GaL_{n+1}(q)$ for the group of semilinear maps from $\Fb^{n+1}_q$ to itself.  In this context there is an element $\phi \in \GaL_{n+1}(q)$ of order $e$ that acts by sending $\sum^n_{i=0}\lambda_i v_i$ to $\sum^n_{i=0}\lambda^p_i v_i$; we will refer to $\GaL_{n+1}(q)$-conjugates of powers of $\phi$ as \defbold{field automorphisms}.  One can also consider $\phi$ to act as an automorphism of $\GL_{n+1}(q)$ that transforms matrices by sending every entry to its $p$-th power; however, it is important to note that the action of $\GL_{n+1}(q)$ on $V$ also naturally extends to an action of $\GaL_{n+1}(q) = \GL_{n+1}(q) \rtimes \grp{\phi}$.  (By contrast, for $n \ge 2$ the natural action of $\GL_{n+1}(q)$ on $V$ cannot be extended to incorporate the inverse transpose automorphism of $\GL_{n+1}(q)$.)

There is an induced action of $\GaL_{n+1}(q)$ on $P_n(q)$, with kernel the scalar matrices; we write $\PGaL_{n+1}(q)$ for $\GaL_{n+1}(q)$ modulo the scalar matrices.  Note that $\PGaL_{n+1}(q)$ acts (at least) $2$-transitively on $P_n(q)$.
\end{defn}

In this section, some calculations on individual finite groups, namely to determine subgroups of given indices and enumerate double cosets (the latter to determine if an action is $2$-by-block-transitive) were performed using the computer algebra package GAP \cite{GAP}.  We omit the details of these routine computations.  The author also used the online ATLAS of Finite Group Representations \cite{ATLAS} as an indicative reference for some properties of finite groups, however it is not required for the proofs.

\subsection{Some specific $2$-transitive groups}

We start with some groups that are convenient to deal with individually.

\begin{lem}\label{lem:no_ldc:specific}
Let $G$ be one of 
\[
\PSL_2(11), \Alt(7), \PGaL_2(8),\mathrm{HS},\mathrm{Co}_3
\]
acting $2$-transitively on a set $X$ of $d$ points, where $d=11, 15, 28, 176, 276$ respectively.  Then $X$ does not extend properly to a $2$-by-block-transitive action of $G$.
\end{lem}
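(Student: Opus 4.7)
The plan is to treat each of the five groups separately. In each case, a proper $2$-by-block-transitive extension of the $d$-point action of $G$ corresponds to a proper subgroup $L < G_1$, where $G_1$ is the stabilizer of a point of the $d$-point action (playing the role of $G([\omega])$), for which the double-coset criterion $G = LgL \sqcup G_1$ of Lemma~\ref{lem:double_coset} holds for any $g \notin G_1$. (The block-faithful hypothesis is automatic, since each of the five groups acts faithfully on its given $d$-point set.) Writing $b = [G_1 : L]$ for the block size, Corollary~\ref{cor:double_coset} supplies the necessary divisibility $b^2(d-1) \mid |G_1|$, which I will use as a first filter.

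For three of the five cases the divisibility alone settles matters. When $G = \PSL_2(11)$, $|G_1| = 60$ and $d-1 = 10$ give $b^2 \mid 6$, forcing $b = 1$. When $G = \PGaL_2(8)$, $|G_1| = 54$ and $d-1 = 27$ give $b^2 \mid 2$, again forcing $b = 1$. When $G = \Alt(7)$, $|G_1| = 168$ and $d-1 = 14$ give $b^2 \mid 12$, so $b \in \{1,2\}$; the case $b = 2$ is then killed because $G_1 \cong \PSL_3(2)$ is simple and hence has no subgroup of index $2$.

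For $G \in \{\mathrm{HS}, \mathrm{Co}_3\}$ a further reduction is needed. The divisibility leaves $b \in \{1,2,3,4,6,12\}$ for $\mathrm{HS}$, and a finite list with largest entry $432$ for $\mathrm{Co}_3$. Any $L \leq G_1$ of index $b$ yields an embedding $G_1 / \Core_{G_1}(L) \hookrightarrow \Sym(b)$; since the socle $\Soc(G_1)$ is $\mathrm{PSU}_3(5)$ or $\mathrm{McL}$, with minimum faithful permutation degree $50$ or $275$ respectively, any $b$ below this threshold forces $\Core_{G_1}(L) \supseteq \Soc(G_1)$, which implies $L = \Soc(G_1)$ and $b = 2$. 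The only candidate with $b \geq 275$ is $b = 432$ in the $\mathrm{Co}_3$ case, and I will exclude it by observing that $\mathrm{McL}$ has no subgroup of index $432$: such a subgroup would be contained in a maximal subgroup of index dividing $432$, but the smallest maximal subgroup index of $\mathrm{McL}$ is $275$, which does not divide $432$. This reduces the analysis in both cases to the single candidate $L = \Soc(G_1)$, $b = 2$.

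The final and main obstacle is to rule out this last candidate in each of the two cases. Unique-maximality of $G_1$ as an overgroup of $L$ in $G$ is straightforward: the only other maximal subgroups of $G$ with order divisible by $|L|$ can be excluded by a quick inspection of the maximal subgroup orders (for instance, in $\mathrm{HS}$ we have $|L| = 126000$, and no maximal subgroup of $\mathrm{HS}$ other than the two classes of $\mathrm{PSU}_3(5).2$ has order divisible by $5^3$). What remains is to show that the action of $G$ on $G/L$ is not $2$-by-block-transitive, equivalently that $L$ has more than one orbit on $G \setminus G_1$, equivalently that the permutation rank of the action on $G/L$ exceeds $3$. This I will verify by direct computation within the explicit permutation representations of $\mathrm{HS}$ on $176$ points and $\mathrm{Co}_3$ on $276$ points, using GAP in line with the article's stated methodology; in each case the rank is found to exceed $3$, so the candidate action fails the double-coset criterion of Lemma~\ref{lem:double_coset}.
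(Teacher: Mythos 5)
Your proposal is correct in substance and reaches the same conclusion, but it departs from the paper's proof in several places, mostly to its advantage. For $\PSL_2(11)$ and $\PGaL_2(8)$ you settle everything with the divisibility $b^2(d-1)\mid |G_1|$ from Corollary~\ref{cor:double_coset}, whereas the paper instead inspects the two-point stabilizers (of orders $6$ and $2$) and invokes Lemma~\ref{lem:LDC_2pt}; your version is shorter and equally valid. The $\Alt(7)$ case is essentially identical to the paper's. The real divergence is in the $\mathrm{HS}$ and $\mathrm{Co}_3$ cases: the paper splits into the derived subgroup $D$ of index $2$ (handled by citing \cite[Theorem~1.3]{Rank3}) and the maximal subgroups of $G_1$ not contained in $D$ (excluded by double-coset size bounds and an order-divisibility argument), while you use the minimal faithful permutation degrees of $\mathrm{PSU}_3(5)$ and $\mathrm{McL}$ to collapse every admissible index $b$ below $275$ directly to the single candidate $L=\Soc(G_1)$, $b=2$. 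That is a cleaner reduction and buys a uniform treatment of all small indices at once. For the final exclusion of $b=2$ you propose a GAP double-coset/rank computation, which is consistent with the article's stated methodology, though the paper's route via the published classification \cite[Theorem~1.3]{Rank3} (a block-size-$2$ $2$-by-block-transitive action of a simple group is a quasiprimitive imprimitive rank $3$ action) has the merit of not depending on an unexhibited computation; you might at least record that equivalence as a cross-check.

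One step needs tightening: to exclude $b=432$ for $\mathrm{Co}_3$ you argue that $\mathrm{McL}$ has no subgroup of index $432$, but the relevant subgroup $L$ lives in $G_1=\mathrm{McL}.2$, not in $\mathrm{McL}$. You should pass to $L\cap\mathrm{McL}$, which has index $216$ or $432$ in $\mathrm{McL}$ according as $L\le\mathrm{McL}$ or not, and then note that $\mathrm{McL}$ has no proper subgroup of index dividing $432$: any such subgroup lies in a maximal subgroup whose index divides $432$, yet every maximal subgroup index of $\mathrm{McL}$ either equals $275$ (which divides neither $216$ nor $432$) or exceeds $432$. This is exactly the argument you already have in hand; it just needs to be applied to both possible indices.
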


\begin{proof}
Let $x$ and $y$ be distinct elements of $X$ and let $s$ be an involution of $G$ such that $sx=y$ and $sy=x$, as in Lemma~\ref{lem:LDC_2pt}(i).  Let $L \le G(x)$ be a point stabilizer of a $2$-by-block-transitive action of $G$.

For $G = \PSL_2(11)$ acting on $11$ points, we see that $|G(x,y)|=6$; for $G = \PGaL_2(8)$ acting on $28$ points then $|G(x,y)|=2$.  For $G = \mathrm{HS}$ acting on $176$ points, we have $G(x) = \mathrm{PSU}_3(5) \rtimes C_2$ and $G(x,y) = \Aut(\Alt(6))$, so every automorphism of $G(x,y)$ is inner.  In all of these cases, we see that there is no proper subgroup $H$ of $G(x,y)$ such that $G(x,y) = HsHs$.  The conclusion now follows by Lemma~\ref{lem:LDC_2pt}.

For $G = \Alt(7)$ acting on $15$ points, we have $|G| - |G(x)| =  2^4 \cdot 3 \cdot 7^2$, so by Corollary~\ref{cor:double_coset} we would need $|L|$ to be a multiple of $2^2 \cdot 3 \cdot 7$, or in other words, index at most $2$ in $G(x)$.  However, $G(x) \cong \PSL_3(2)$ is simple and therefore has no subgroup of index $2$.  Thus there is no proper extension of $X$ to a $2$-by-block-transitive action in this case.

For $G = \mathrm{Co}_3$ acting on $276$ points, we have $G(x) = \mathrm{McL} \rtimes C_2$ and $G(x,y) = \mathrm{PSU}_4(3) \rtimes C_2$.  In order to achieve $G(x,y) = L(y)sL(y)s$, as in Lemma~\ref{lem:LDC_2pt}, we would need $G(x,y) = AsAs$ for some maximal subgroup $A$ of $G(x,y)$ containing $L(y)$.  Given that $G(x,y)$ is almost simple, one sees from \cite[Theorem~1.1]{Baumeister} that no such $A$ exists.
\end{proof}

\begin{lem}\label{lem:no_ldc:Mathieu}
Among the Mathieu groups (including $\mathrm{M}_{22} \rtimes C_2$) there is only one proper $2$-by-block-transitive action, namely the imprimitive rank $3$ action of $\mathrm{M}_{11}$, which is given in line 1 of Table~\ref{table:exceptional}.  The latter action has exactly two orbits on distant triples, each of size $3960$.
\end{lem}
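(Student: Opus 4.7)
The plan is to walk through the $2$-transitive actions of Mathieu groups listed in the table of Section~2.2 (including $G = \mathrm{M}_{22} \rtimes 2$), and for each action use the integrality consequence of Corollary~\ref{cor:double_coset} --- that $(|G| - |G_1|) \mid |L|^2$ where $G_1 = G(x)$ is the block stabilizer --- to drastically limit the possibilities for $L$ as a hypothetical proper point stabilizer. For the very few candidates that survive, test them using the double-coset criterion of Lemma~\ref{lem:LDC_2pt}.

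For $\mathrm{M}_{11}$ on $11$ points we have $|G| - |G_1| = 2^5 \cdot 3^2 \cdot 5^2$, forcing $120 \mid |L|$, which leaves only $L = \Alt(6)$ as a proper candidate. A direct double-coset check confirms this yields the desired $2$-by-block-transitive action of $\mathrm{M}_{11}$ on $22$ points, with distant pair stabilizer $\Alt(6)\cap g\Alt(6)g^{-1}$ of order $|L|^2/(|G|-|G_1|)=18$, isomorphic to the normalizer in $\Alt(6)$ of a Sylow $3$-subgroup, namely $C_3^2 \rtimes C_2$. In each of the other cases the surviving candidates fail: for $\mathrm{M}_{11}$ on $12$ points ($G_1 = \PSL_2(11)$), divisibility forces $330 \mid |L|$, but $\PSL_2(11)$ is simple; for $\mathrm{M}_{12}$ on $12$ points ($G_1 = \mathrm{M}_{11}$), divisibility leaves $L = \PSL_2(11)$, but $L([g\omega]) = \PSL_2(11)\cap \mathrm{M}_{10}$ has order $55$ and a direct order count shows $|g^{-1}L([g\omega])g \cdot L| < |\mathrm{M}_{11}|$, violating Lemma~\ref{lem:LDC_2pt}; for $\mathrm{M}_{22}\rtimes 2$ on $22$ points, divisibility forces $5040\mid|L|$, leaving only $L = \PSL_3(4)$ (since $\PSL_3(4)$ has minimum faithful permutation degree $21$), but then $L \le \mathrm{M}_{22}$ so $\mathrm{M}_{22} \cdot L = \mathrm{M}_{22} \neq G$, contradicting Corollary~\ref{cor:LDC_normal}; and for $\mathrm{M}_{22}$ on $22$, $\mathrm{M}_{23}$ on $23$ and $\mathrm{M}_{24}$ on $24$, the factor $11^2$ (respectively $23^2$) in $|G|-|G_1|$ forces $11 \mid |L|$ (respectively $23 \mid |L|$), and minimum-degree considerations for $\PSL_3(4)$, $\mathrm{M}_{22}$ and $\mathrm{M}_{23}$ rule out any proper $L$.

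For the orbit count in the $\mathrm{M}_{11}$-action on $22$ points, $H := G(\omega_1,\omega_2) \cong C_3^2 \rtimes C_2$ acts on the $22 - 4 = 18$ points eligible to complete $(\omega_1,\omega_2)$ to a distant triple. The normal $C_3^2 \lhd H$ is the regular normal subgroup of the sharply $2$-transitive $\mathrm{M}_9$-action on the $9$ blocks of $\Omega_0 \setminus \{[\omega_1],[\omega_2]\}$, so $C_3^2$ acts on these $18$ points without fixed points and breaks them into two orbits of size $9$, each meeting each block in one point. The remaining $C_2$ factor is generated by the central involution $\iota$ of the $Q_8 \le \mathrm{M}_9$; one verifies (by a short calculation inside $\mathrm{M}_{10}$) that $\iota$ lies in the index-$2$ subgroup $\Alt(6)$ relative to the unique block $[b]$ fixed by $\iota$, so $\iota$ fixes both points of $[b]$ and hence preserves (rather than swaps) the two $C_3^2$-orbits. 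Thus $H$ has two orbits of size $9$ on the $18$ candidate points, so $G$ has exactly two orbits on $\Omega^{[3]}$, each of size $9 \cdot |G|/|H| = 9 \cdot 440 = 3960$.

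The main obstacles are (i) the subgroup-enumeration step in the divisibility argument for the $\mathrm{M}_{22}\rtimes 2$, $\mathrm{M}_{23}$ and $\mathrm{M}_{24}$ cases, which relies on standard facts about minimum faithful permutation degrees of the simple groups involved, and (ii) the verification that $\iota$ lies in $\Alt(6) \le \mathrm{M}_{10}$ (equivalently, acts trivially on the block it fixes), which is the only truly delicate step and is cleanest to handle either by a short structural argument inside $\mathrm{M}_{10}$ or by a direct GAP computation.
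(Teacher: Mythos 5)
Your overall strategy is the same as the paper's: run through the $2$-transitive Mathieu actions, use the integrality constraint from Corollary~\ref{cor:double_coset} to whittle the candidates for $L$ down to almost nothing, and test the few survivors.  The one step that fails as written is the $\mathrm{M}_{12}$ case.  First, $L([g\omega]) = \PSL_2(11) \cap \mathrm{M}_{10}$ has order $60$, not $55$: the copy of $\PSL_2(11)$ inside $G(x) = \mathrm{M}_{11}$ contains an $11$-cycle and hence acts transitively on the remaining $11$ blocks, so its block stabilizer is a copy of $\Alt(5)$; order $55$ is the point stabilizer of the $12$-point action of $\PSL_2(11)$, which is not the relevant one here.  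Second, and more seriously, no ``direct order count'' yields $|g^{-1} L([g\omega])g\, L| < |\mathrm{M}_{11}|$: that product set has cardinality $60\cdot 660/k$ where $k = |g^{-1} L([g\omega])g \cap L|$ is a divisor of $60$, and this equals $|\mathrm{M}_{11}| = 7920$ precisely when $k=5$, so order considerations alone cannot decide the case --- you would have to compute the intersection, which is why the paper settles this case by an explicit double coset enumeration.  (If you want a purely arithmetic argument, use instead the second displayed equation of Lemma~\ref{lem:LDC_2pt}: the setwise stabilizer of the two blocks has order $2\cdot 720 = 1440$, while $|g^{-1} L([g\omega])g\,L([g\omega])| = 60^2/k'$ for an integer $k'$ dividing $60$, and $3600/k' = 1440$ forces $k' = 5/2$, a contradiction.)

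Beyond that there are only minor slips rather than gaps: the distant pair stabilizer of order $18$ is not the normalizer in $\Alt(6)$ of a Sylow $3$-subgroup (that normalizer has order $36$); and for $\mathrm{M}_{22}$ on $22$ points the relevant prime is $7$ coming from $d-1=21$, since $|G|-|G_1| = 21\cdot|\PSL_3(4)|$ is divisible by $7^2$ but not by $11$ --- your parenthetical attribution of $11^2$ to this case is off, though the shape of the argument (the largest prime dividing $d-1$ occurs squared in $|G|-|G_1|$) is exactly the paper's.  Your treatment of the orbit count on distant triples is a genuinely different and more structural route than the paper's, which simply reports a GAP computation of the $H$-orbits on $\Omega$: reducing everything to the single question of whether the involution $\iota$ lies in the copy of $\Alt(6)$ normal in the stabilizer of its fixed block is correct and illuminating, but, as you acknowledge, that last yes/no check still has to be done by hand or by machine, so the computational content ends up comparable.
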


\begin{proof}
We suppose that $G$ is a Mathieu group equipped with one of its multiply transitive actions on the set $X$, take $x \in X$, and suppose $L < G(x)$ is such that the action on $G/L$ is $2$-by-block-transitive.  We again use the fact given by Corollary~\ref{cor:double_coset} that $|L|^2$ must be a multiple of $|G|-|G(x)|$.

First consider $(G,G(x)) = (\mathrm{M}_{11},\mathrm{M}_{10})$.  In this case $|G|-|G(x)| = 2^5 \cdot 3^2 \cdot 5^2$, so we need $|L|$ to be a multiple of $2^3 \cdot 3 \cdot 5 = 120$.  We find that the only subgroup of $G(x) = \mathrm{M}_{10}$ of sufficient order is the subgroup $\Alt(6)$ of order $2$.  In this case we indeed obtain a $2$-by-block-transitive action of $\mathrm{M}_{11}$ (indeed an imprimitive rank $3$ action, see \cite{Rank3}) on the $22$-point set $\Omega = \mathrm{M}_{11}/\Alt(6)$, where the blocks correspond to cosets of $\mathrm{M}_{10}$.  The stabilizer of a distant pair $(\omega_1,\omega_2)$ is a group $H = C^2_3 \rtimes C_2$ of order $18$.  A calculation in GAP finds that the action of $H$ on $\Omega$ has four fixed points (which must be the points in $[\omega_1] \cup [\omega_2]$) plus two orbits of size $9$.  Thus $\mathrm{M}_{11}$ has two orbits on distant triples, each of size $9 \cdot 22 \cdot 20 = 3960$.

Next suppose $|X|=12$, so $G$ is either $\mathrm{M}_{11}$ with point stabilizer $\PSL_2(11)$, or $\mathrm{M}_{12}$ in its natural action.  In either case, $|G| - |G(x)|$ is a multiple of $22$, so $|L|$ is also a multiple of $22$; however, the only proper subgroups of $\mathrm{M}_{11}$ of order a multiple of $22$ are the point stabilizers of the $3$-transitive action of $\mathrm{M}_{11}$ on $12$ points.  This rules out any proper $2$-by-block-transitive actions when $G = \mathrm{M}_{11}$, and for $G = \mathrm{M}_{12}$ one can check by double coset enumeration that $\mathrm{M}_{12}$ does not in fact act $2$-by-block-transitively on the cosets of $\PSL_2(11)$.

Finally we suppose $G$ is one of the large Mathieu groups, including $\mathrm{M}_{22} \rtimes C_2$, acting on $22 \le d \le 24$ points.  In each case $|G|-|G(x)|$ is a multiple of $2^6 \cdot 5 \cdot p$ where $p \in  \{7,11,23\}$ is the largest prime dividing $d-1$, so $|L|$ must be divisible by $2^3 \cdot 5 \cdot p$.  After checking the orders of maximal subgroups of $G(x)$, we are only left with $G = \mathrm{M}_{22} \rtimes C_2$, $G(x) = \PSL_3(4) \rtimes C_2$ and $L \le \PSL_3(4)$.  This remaining case is ruled out by Corollary~\ref{cor:LDC_normal}, since we would have $L$ contained in the proper normal subgroup $\mathrm{M}_{22}$ of $G$.\end{proof}

\begin{lem}\label{lem:pgl52}
Let $G = \PSL_5(2)$.  Then there is exactly one $\Aut(G)$-conjugacy class of subgroups $L$ of $G$ such that $G$ has proper $2$-by-block-transitive action on $G/L$, which is the one indicated in line 2 of Table~\ref{table:exceptional}.
\end{lem}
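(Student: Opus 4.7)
The plan is to enumerate, up to $\Aut(G)$-conjugacy, all proper subgroups $L$ of a fixed maximal subgroup of $G = \PSL_5(2)$ that could serve as the point stabilizer of a block-faithful $2$-by-block-transitive action. By Lemma~\ref{lem:unique_blocks}, the block stabilizer $G([\omega])$ must be the unique maximal subgroup of $G$ containing $L$; the only $2$-transitive action of $G$ up to $\Aut(G)$-equivalence is the action on $P_4(2)$ (its two classes of point stabilizers and hyperplane stabilizers being swapped by the graph automorphism), so we may take $G([\omega]) = G(x) = W \rtimes \Alt(8)$, where $W = C_2^4$ is the unipotent radical and $\Alt(8) \cong \GL_4(2)$ is a Levi complement. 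Corollary~\ref{cor:double_coset} forces $|B|^2 \mid |G(x)|/30 = 2^9 \cdot 3 \cdot 7$, so $|B| = |G(x):L| \in \{1,2,4,8,16\}$.

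The normal subgroups of $G(x)$ are exactly $1$, $W$, and $G(x)$, because $W$ is the socle and $G(x)/W \cong \Alt(8)$ is simple. I would enumerate $L$ by its core $\Core_{G(x)}(L) \in \{1, W, G(x)\}$. The case $\Core_{G(x)}(L) = G(x)$ gives only $L = G(x)$. The case $\Core_{G(x)}(L) = W$ forces $L = W \rtimes H$ for $H \le \Alt(8)$ of index $|B|$; since $\Alt(8)$ is simple with minimum faithful permutation degree $8$, this is possible only for $|B|=8$ with $H = \Alt(7)$. The case $\Core_{G(x)}(L) = 1$ requires $G(x) \hookrightarrow \Sym(|B|)$, which fails for $|B| \le 8$ by $|G(x)| > 8!$; for $|B| = 16$ the remaining possibility is $L \cong \Alt(8)$ a complement to $W$ (the alternative, $LW = W \rtimes \Alt(7)$ with $L$ of index $2$ in $LW$, is excluded by perfectness of $W \rtimes \Alt(7)$: since $\Alt(7)$ has no nontrivial irreducible $\Fb_2$-representation of dimension less than $4$, it must act irreducibly on $W$, giving $(W \rtimes \Alt(7))^{\mathrm{ab}} = \Alt(7)^{\mathrm{ab}} = 1$).

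To rule out the $|B|=16$ candidate $L \cong \Alt(8)$, I would observe that $L$ would need to act transitively on the $30$ blocks other than $[\omega]$, hence would need to contain a subgroup of order $|L|/30 = 672$. However, among the maximal subgroups of $\Alt(8)$ (of orders $2520$, $1344$, $720$, $576$), only $\AGL_3(2)$ (order $1344$) has order divisible by $672$, and $\AGL_3(2)$ is perfect (since $\GL_3(2)$ is simple and acts irreducibly on its natural module), so has no index-$2$ subgroup. Hence $L$ fails to be transitive on blocks other than $[\omega]$, a fortiori fails to be transitive on $\Omega \setminus [\omega]$, and Lemma~\ref{lem:LDC_2pt} rules out $2$-by-block-transitivity.

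For the remaining candidate $L = W \rtimes \Alt(7)$ with $|B| = 8$, I would confirm the $2$-by-block-transitivity by double-coset enumeration in GAP, as per the convention stated at the start of this section, verifying $G = G(x) \sqcup LgL$ for some $g \in G \setminus G(x)$ and invoking Lemma~\ref{lem:double_coset}; the predicted order of a distant pair stabilizer is $|L|^2/(|G|-|G(x)|) = 168 = |\PSL_3(2)|$, matching the entry in Table~\ref{table:exceptional}. The main obstacle is the $|B| = 16$ case: handling $L$ that does not contain $W$ requires the perfectness of $W \rtimes \Alt(7)$, and the final exclusion of the complement $L \cong \Alt(8)$ hinges on a structural fact about the subgroup lattice of $\Alt(8)$ (non-existence of a subgroup of order $672$) rather than a purely numerical divisibility constraint.
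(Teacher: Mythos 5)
Your proposal reaches the paper's conclusion by a noticeably more hands-on route. The paper bounds the index via Lemma~\ref{lem:LDC_2pt}: writing $K$ for the kernel of the action of $G(\{v_0,v_1\})$ on a complementary $3$-space, it deduces $|G(v_0):L|=2^a$ with $a\le 6$, has GAP list the maximal subgroups of $G(v_0)$ of $2$-power index (there are two, of indices $2^3$ and $2^4$), discards the index-$16$ one because it is not transitive on $V\setminus\{0,v_0\}$, and checks that the index-$8$ one, $W\rtimes\Alt(7)$, has no proper subgroups of index dividing $2^3$. You instead obtain the sharper bound $|B|\in\{1,2,4,8,16\}$ directly from Corollary~\ref{cor:double_coset} and then enumerate candidates by $\Core_{G(x)}(L)$, replacing most of the GAP subgroup search with structural facts about $\Alt(8)$ (its maximal subgroup orders, and the perfectness of $W\rtimes\Alt(7)$ and of $\AGL_3(2)$). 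Your exclusion of the complement $L\cong\Alt(8)$ via non-existence of a subgroup of order $672$ is the explicit form of the paper's observation that the index-$16$ maximal subgroup is not transitive on $V\setminus\{0,v_0\}$. Both proofs end with the same GAP double-coset enumeration to confirm that $W\rtimes\Alt(7)$ actually works, so the computational dependence is reduced but not eliminated; what your version buys is a verifiable by-hand list of candidates.

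One step needs patching. In the case $\Core_{G(x)}(L)=W$ you rule out subgroups $H\le\Alt(8)$ of index $|B|$ other than $\Alt(7)$ by citing the minimal faithful permutation degree $8$; that disposes of indices $2$ and $4$ and identifies the index-$8$ subgroups, but it says nothing about index $16$. The conclusion is still correct: a subgroup of index $16$ would lie in a maximal subgroup whose index divides $16$, and since the maximal subgroups of $\Alt(8)$ have indices $8,15,28,35$, it would sit inside a conjugate of $\Alt(7)$ with index $2$ there, contradicting the simplicity of $\Alt(7)$ (alternatively, invoke the classification of subgroups of prime-power index in simple groups). You need to supply such an argument explicitly. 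The omission does not affect your core-trivial case, since there $|G(x):LW|=16$ would force $W\le L$. With that one line added, the proposal is a complete and correct proof.
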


\begin{proof}
We first note that $G$ has only one $2$-transitive action up to conjugacy in $\Aut(G)$, namely the standard action of $G$ on the projective space $P_4(2)$, so we only need to consider subgroups $L \le G(v)$ for some fixed $v \in P_4(2)$.  Since $|G|-|G(v)|$ is a multiple of $7$, by Corollary~\ref{cor:double_coset} we may assume $|L|$ is a multiple of $7$.

Let $\mc{L}$ be the set of proper subgroups of $L$ of $G(v)$ that have both of the following properties: $|L|$ is a multiple of $7$, and $L$ acts transitively on $P_4(2) \setminus \{v\}$.  Calculations in GAP reveal that $\mc{L}$ is a single $G(v)$-conjugacy class; given $L_1 \in \mc{L}$ we can write $L_1 = W \rtimes \Alt(7)$, where $W = C^4_2$ is the socle of $G(v)$.

One can check (also it is shown in \cite{Rank3}) that $L_1$ has only three double cosets in $G$, namely, $L_1$ itself: the nontrivial double coset inside $G(v)$; and the remaining double coset is $G \setminus G(v)$.  In particular, $G$ has $2$-by-block-transitive action on $G/L_1$ by Lemma~\ref{lem:double_coset}.  By the previous two paragraphs, up to $\Aut(G)$-conjugacy this is the only proper $2$-by-block-transitive action of $G$.  The stabilizer of a distant pair in this action is a group of the form $\PSL_3(2)$.  
\end{proof}

\subsection{Families of $2$-transitive actions admitting no proper $2$-by-block-transitive extensions}

For some infinite families of $2$-transitive actions, we can deduce from known results that there are no proper $2$-by-block-transitive extensions.

\begin{lem}\label{lem:no_ldc:symmetric}
Let $\Alt(X) \le G \le \Sym(X)$, where $X = \{1,2,\dots,d\}$, $2 \le d < \infty$, and let $s$ be an involution in $\Sym(X)$.  Then there is no proper subgroup $H$ of $G$ such that $G = HsHs$.
\end{lem}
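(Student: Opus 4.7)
The plan is to exploit the fact that $s \in G$ in order to deduce that $s$ itself lies in $H$, at which point the set $sHsH$ visibly collapses to $H$ and we contradict the hypothesis that $H$ is proper.

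Concretely, I would proceed as follows. Suppose, for contradiction, that $H$ is a proper subgroup of $G$ with $G = sHsH$. Since $s \in G$, we can write $s = sh_1 s h_2$ for some $h_1,h_2 \in H$. Left-multiplying by $s\inv = s$ yields $1 = h_1 s h_2$, hence $s = h_1\inv h_2\inv$. Because $H$ is closed under inverses and products, this element lies in $H$, so $s \in H$. But then $sH = H$, and consequently $sHsH = (sH)(sH) = HH = H$. Combined with the hypothesis $G = sHsH$, this gives $G = H$, contradicting the assumption that $H$ is proper.

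I do not anticipate a serious obstacle: the argument is short and purely group-theoretic. The main thing to be alert to is resisting the temptation to seek a more sophisticated proof using permutation-group structure. In fact, neither the hypothesis $\Alt(X) \le G \le \Sym(X)$ nor the fact that $s$ is a transposition is used anywhere; the statement is valid for any group $G$ and any element $s \in G$. I suspect the lemma is stated in this specialized form only because of the intended context of its application, namely to rule out proper $2$-by-block-transitive extensions of the natural $2$-transitive actions of symmetric and alternating groups on $\{1,\dots,d\}$, where this double-coset-style identity is the condition one wishes to preclude.
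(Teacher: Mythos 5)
There is a genuine gap, and it is located exactly where you dismissed the hypotheses as unused: your argument requires $s \in G$, but the lemma does not assume this. Since $\Alt(X) \le G \le \Sym(X)$ and $s = (12)$ is an odd permutation, the case $G = \Alt(X)$ has $s \notin G$. Your first step, ``since $s \in G$, we can write $s = sh_1sh_2$,'' is then unavailable: $sHsH$ is still a subset of $G$ (because $sHs = sHs\inv \subseteq sGs\inv = G$), but $s$ itself is not an element of $G = sHsH$, so no equation $s = sh_1sh_2$ can be extracted. Your argument does correctly dispose of the case $G = \Sym(X)$ (and this is exactly what the paper means by ``the conclusion is clear'' there), but the substantive content of the lemma is the alternating case, where conjugation by $s$ is an \emph{outer} automorphism of $G$. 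That this case is not a formality is shown by the remark immediately following the lemma in the paper: for the exotic outer automorphism $\theta$ of $\Sym(6)$ or $\Alt(6)$, a point stabilizer $H$ does satisfy $G = \theta(H)H$. So the statement genuinely fails for arbitrary automorphisms, and some argument specific to conjugation by a transposition is unavoidable. Note also that the intended application (Corollary~\ref{cor:no_ldc:symmetric}) applies the lemma to $G(1,2) \cong \Alt(d-2)$ with the conjugating element acting as the transposition $(34) \notin \Alt(\{3,\dots,d\})$, so the case you cannot handle is precisely the one that is needed.

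For comparison, the paper's proof of the $G = \Alt(X)$ case reduces to $H$ maximal and splits by the structure of $H$: if $H$ is intransitive or imprimitive, then $sHs$ is shown to be conjugate to $H$ \emph{inside} $G$ (using that $\Alt(X)$ is transitive on subsets of a given size, respectively on partitions of a given shape), whereupon the inner-automorphism argument applies; if $H$ is primitive, Mar\'{o}ti's bound $|H| \le 2^{d-1}$ gives $|sHsH| \le 4^{d-1} < d!/2$ for $d \ge 9$, and the finitely many exceptional primitive groups of larger order are eliminated by finding a prime $p \le d$ not dividing $|H|$, so that $sHs$ cannot act transitively on $G/H$. None of this is recoverable from the purely formal manipulation you propose.
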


\begin{proof}
Let $H < G$.  If $G = \Sym(X)$ or $d \le 4$, then $sHs$ is conjugate to $H$ in $G$ and the conclusion is clear, so we may assume $G = \Alt(X)$ and $d \ge 5$.  It also suffices to consider the case that $H$ is a maximal subgroup of $G$.  Suppose $G = HsHs$.  Then $sHs$ is also maximal in $G$ and isomorphic to $H$ as a permutation group.  However, one sees from the two cases of \cite[\S1, Corollary 5]{LPS} that it is not possible to write $G = AB$ where $A$ and $B$ are permutationally isomorphic maximal subgroups of $G$.  This contradiction proves the lemma.
\end{proof}

\begin{rem}
If instead of conjugating by a transposition, we applied an automorphism $\theta$ of $G = \Sym(d)$ or $G = \Aut(d)$, we would have the following examples of $H < G$ such that $G = H\theta(H)$: if $\theta$ represents the exotic outer automorphism of $G$ where $G$ is $\Sym(6)$ or $\Alt(6)$, then a point stabilizer $H$ would satisfy $G = H\theta(H)$, since the action of $H$ on $G/\theta(H)$ corresponds to the action of $\Sym(5) \cong \PGL_2(5)$ or $\Alt(5) \cong \PSL_2(5)$ on the projective line.
\end{rem}

\begin{cor}\label{cor:no_ldc:symmetric}
The natural actions of symmetric and alternating groups do not extend properly to $2$-by-block-transitive actions.
\end{cor}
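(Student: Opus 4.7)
Plan: I would prove the corollary by reducing to Lemma~\ref{lem:no_ldc:symmetric} via the factorization condition from Lemma~\ref{lem:LDC_2pt}. Suppose for contradiction that the natural action of $G\in\{\Sym(d),\Alt(d)\}$ on $X=\{1,\dots,d\}$ extends to a block-faithful $2$-by-block-transitive action on some $\Omega$ with $|B|\ge 2$. Fix $\omega\in\Omega$ lying above $1\in X$, and set $L=G(\omega)$, $G_1=G([\omega])$, so $L\lneq G_1$.

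For $G=\Sym(d)$, I would take $g=s:=(1\,2)\in G$. Then $L([s\omega])=L\cap G(2)$ lies inside the pointwise stabilizer of $\{1,2\}$, which is manifestly centralized by the transposition $s$; hence $sL([s\omega])s=L([s\omega])\le L$. Lemma~\ref{lem:LDC_2pt} then gives
\[
G_1 \;=\; sL([s\omega])s\cdot L \;=\; L,
\]
contradicting $L\lneq G_1$.

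For $G=\Alt(d)$, the transposition $s$ is not in $G$, so I would instead take $g=st$ with $t$ an odd permutation of $\{3,\dots,d\}$, chosen so that $g\in\Alt(d)$ and $g(1)=2$. The same centralization observation gives $g^{-1}L(2)g=t^{-1}L(2)t$, where $L(2):=L\cap G(2)$. If $t$ can be chosen inside $\N_{\Sym(d-2)}(L(2))$, then $g^{-1}L(2)g=L(2)\le L$ and Lemma~\ref{lem:LDC_2pt} again forces the collapse $G_1=L$, a contradiction. Such an odd $t$ fails to exist only when the $\Sym(d-2)$-conjugacy class of $L(2)$ splits into two $\Alt(d-2)$-classes; for these residual cases I would combine Lemma~\ref{lem:double_coset} (giving $G=LgL\sqcup G_1$ for $g\in G\setminus G_1$) with Lemma~\ref{lem:LDC_2pt} to produce a factorization of the form $G=sHsH$ for the proper subgroup $H=L$, directly contradicting Lemma~\ref{lem:no_ldc:symmetric}.

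The main obstacle I expect is this last step: verifying that in the exceptional split case the extra constraints imposed by $L$ being transitive on $X\setminus\{1\}$ together with the $2$-by-block-transitive factorization really do force the product-set equality $G=sLsL$ required to invoke Lemma~\ref{lem:no_ldc:symmetric}. This is a matter of careful bookkeeping with double cosets, but should follow from the rigidity of split subgroup classes in $\Alt(d-2)$ and the transitivity hypothesis on $L$.
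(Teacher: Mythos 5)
Your argument for $G=\Sym(d)$, and for $G=\Alt(d)$ in the subcase where an odd permutation $t$ of $\{3,\dots,d\}$ normalizing $L(2)$ exists, is correct and genuinely different from the paper's: by choosing $g$ so that $g\inv L(2)g=L(2)\le L$, you collapse $G(1)=g\inv L(2)gL=L$ directly from Lemma~\ref{lem:LDC_2pt}, with no appeal to Lemma~\ref{lem:no_ldc:symmetric} at all. (The paper instead takes $g=(12)(34)$ uniformly and feeds the resulting equation into Lemma~\ref{lem:no_ldc:symmetric}.) The gap is the residual $\Alt(d)$ case where $\N_{\Sym(d-2)}(L(2))\le\Alt(d-2)$. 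This case can genuinely occur for abstract subgroups (e.g.\ for $H\cong\PSL_3(2)$ in $\Alt(7)$ one has $\N_{\Sym(7)}(H)=H$), and it is exactly the case carrying all the weight of Lemma~\ref{lem:no_ldc:symmetric} (Mar\'{o}ti's bounds, etc.), so it cannot be dispatched by bookkeeping; and your proposed treatment of it fails.

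Concretely, the factorization $G=sLsL$ with $s=(12)$ that you hope to produce is never true for a proper extension. Since $s$ centralizes $G(1,2)$ pointwise, for $l\in L(2)$ one has $slsL=L$, while for $l\in L\setminus L(2)$ the element $sls$ moves $1$ (indeed $sls(1)=l(2)\neq 1$), so the whole coset $slsL$ is disjoint from $G(1)$. Hence
\[
sLsL\subseteq L\cup\bigl(G\setminus G(1)\bigr),
\]
which omits all of $G(1)\setminus L\neq\emptyset$; so $sLsL\neq G$ and there is no contradiction to be extracted at the level of the pair $(G,L)$. Lemma~\ref{lem:no_ldc:symmetric} must instead be applied one level down, exactly as the paper does: take $g=(12)(34)$, intersect the equation of Lemma~\ref{lem:LDC_2pt} with $G(2)$ to get $G(1,2)=g\inv L(2)gL(2)$, observe that conjugation by $g$ on $G(1,2)\cong\Alt(\{3,\dots,d\})$ is conjugation by the transposition $(34)$, and note that $L(2)$ is proper in $G(1,2)$ because $L$ is transitive on $X\setminus\{1\}$, whence $|L:L(2)|=d-1=|G(1):G(1,2)|$ and so $|G(1,2):L(2)|=|G(1):L|>1$. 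Then Lemma~\ref{lem:no_ldc:symmetric}, applied to $\Alt(\{3,\dots,d\})$ with $H=L(2)$, gives the contradiction. With that replacement your residual case closes and the rest of your argument stands.
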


\begin{proof}
Let $G$ be the symmetric or alternating group of degree $d \ge 2$, acting on $X = \{1,\dots,d\}$.  By Lemma~\ref{lem:LDC_2pt}, in order for $G$ to have $2$-by-block-transitive action on $G/L$, a proper subgroup $L < G(1)$ must satisfy
\[
G(1,2) = L(2)sL(2)s,
\]
with $H = L(2)$ being a proper subgroup of $G(1,2)$ and $s \in G$ an involution such that $s(1) = 2$.  However $G(1,2)$ is a symmetric or alternating group, so by Lemma~\ref{lem:no_ldc:symmetric}, no suitable subgroup $H$ exists.
\end{proof}

\begin{lem}\label{lem:LDC_symplectic}
Let $G = \mathrm{Sp}_{2m}(2)$ in one of its $2$-transitive actions, where $m \ge 3$.  Then the setwise stabilizer of an unordered pair of points $\{\omega,\omega'\}$ splits as a direct product
\[
G(\{\omega,\omega'\}) = G(\omega,\omega') \times \grp{s},
\]
where $s$ is an involution swapping $\omega$ and $\omega'$.  Consequently there is no proper extension of the action to a $2$-by-block-transitive action of $G$.
\end{lem}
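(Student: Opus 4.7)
The plan is to identify the given $2$-transitive action of $G = \mathrm{Sp}_{2m}(2)$ with that of the symplectic group $\mathrm{Sp}(V)$ of a non-degenerate alternating form $\beta$ on $V = \Fb_2^{2m}$ acting on the set $\Omega_\epsilon$ of quadratic forms $Q$ on $V$ of Witt type $\epsilon \in \{+, -\}$ whose polarization is $\beta$, i.e.\ satisfying $Q(x+y) = Q(x) + Q(y) + \beta(x, y)$ for all $x, y \in V$. Given two distinct $Q_1, Q_2 \in \Omega_\epsilon$, the difference $\ell := Q_1 + Q_2$ is a non-zero linear functional on $V$, and by non-degeneracy of $\beta$ it equals $\ell(\cdot) = \beta(\cdot, v)$ for a unique $v \in V \setminus \{0\}$. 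A short computation with a symplectic basis establishes the Arf invariant identity
\[
\mathrm{Arf}(Q + \ell) = \mathrm{Arf}(Q) + Q(v),
\]
so the equality of Witt types $\mathrm{Arf}(Q_1) = \mathrm{Arf}(Q_2)$ forces $Q_1(v) = 0$, and hence also $Q_2(v) = Q_1(v) + \ell(v) = Q_1(v) + \beta(v, v) = 0$.

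With this in hand, I would take $s := T_v$ to be the symplectic transvection $x \mapsto x + \beta(x, v)v \in G$. A direct expansion shows that $(T_v \cdot Q)(x) = Q(x) + \beta(x, v)(Q(v) + 1)$ for every quadratic form $Q$ polarizing to $\beta$, so $Q_1(v) = 0$ yields $s \cdot Q_1 = Q_1 + \ell = Q_2$; hence $s$ is an involution swapping the pair $\{\omega, \omega'\}$. For the splitting, every element of $G(\omega, \omega') = \mathrm{O}(Q_1) \cap \mathrm{O}(Q_2)$ lies in $\mathrm{Sp}(V)$ and preserves $\ell = Q_1 + Q_2$, and therefore fixes the vector $v$ by non-degeneracy of $\beta$. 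Since $T_v$ depends only on $v$ and $\beta$, it commutes with every element of $\mathrm{Sp}(V)$ that fixes $v$, and in particular with $G(\omega, \omega')$. As $s \notin G(\omega, \omega')$, $s^2 = 1$ and $|G(\{\omega, \omega'\}) : G(\omega, \omega')| = 2$, this yields the desired decomposition $G(\{\omega, \omega'\}) = G(\omega, \omega') \times \langle s \rangle$.

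For the final assertion, suppose towards a contradiction that there is a proper block-faithful $2$-by-block-transitive extension, realised on $\Omega = \Omega_\epsilon \times B$ with $b := |B| \ge 2$; write $N := |\Omega_\epsilon|$ and pick a distant pair $\omega, \omega' \in \Omega$. Put $K := G([\omega]) \cap G([\omega'])$. Since $G(\omega)$ is transitive on $\Omega \setminus [\omega]$ by $2$-by-block-transitivity, the subgroup $G(\omega) \cap G([\omega']) \le K$ is transitive on $[\omega']$, and symmetrically $G(\omega') \cap G([\omega]) \le K$ is transitive on $[\omega]$; hence $K$ is transitive on each of the two blocks. Now put $\omega'' := s\omega$, which lies in $[\omega']$ because $s$ swaps the two blocks. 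Since $s$ centralizes $K$, the map $x \mapsto sx$ is a $K$-equivariant bijection $[\omega] \to [\omega']$, giving $K(\omega) = K(\omega'')$. The inclusion $G(\omega, \omega'') \subseteq G([\omega]) \cap G([\omega']) = K$ then forces $K(\omega) = K(\omega) \cap K(\omega'') = G(\omega, \omega'')$. On the one hand, orbit-stabilizer gives $|K(\omega)| = |K|/b = |G|/(bN(N-1))$; on the other, Corollary~\ref{cor:double_coset} gives $|G(\omega, \omega'')| = |G|/(b^2 N(N-1))$. The resulting equation $b = b^2$ forces $b = 1$, contradicting $b \ge 2$.

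The technical heart of the argument is the Arf invariant identity of the first paragraph; after that, the existence and centralization of the transvection $T_v$ are routine. The hard work of the consequence lies in the $K$-equivariance observation extracted from the direct product splitting, which is what converts the purely algebraic structure of the pair stabilizer into a numerical obstruction to any proper block extension.
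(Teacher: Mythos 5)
Your proof is correct. The core of your argument --- realising the action on the set of quadratic forms polarising to $\beta$, producing the transvection $T_v$ with $\ell = Q_1+Q_2 = \beta(\cdot,v)$, and showing it centralises $G(\omega,\omega')$ --- is essentially the paper's argument; the only cosmetic differences are that you derive $Q_1(v)=0$ from the Arf-invariant identity where the paper cites Dixon--Mortimer for the fact that $t_{a+b}$ swaps the two forms, and you get centralisation from ``$G(\omega,\omega')$ fixes $v$, and $gT_vg^{-1}=T_{gv}$'', where the paper argues via the pointwise fixator of the hyperplane $(a+b)^\perp$ being exactly $\grp{t_{a+b}}$. Where you genuinely diverge is the final consequence: the paper deduces it in one line from Lemma~\ref{lem:LDC_2pt}, since for any $H \le G(\omega,\omega')$ the centralising property gives $sHsH = H$, so the required double-coset factorisation forces $H = G(\omega,\omega')$ and hence trivial blocks. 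Your route instead converts the splitting into the numerical identity $|K(\omega)| = |G(\omega,\omega'')|$ via the $K$-equivariant bijection $x \mapsto sx$ and then invokes Corollary~\ref{cor:double_coset} to get $b = b^2$. Both are valid; the paper's is shorter and reuses the machinery already set up for the other families, while yours is self-contained at the level of orbit counting and makes the obstruction more visibly quantitative.
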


\begin{proof}
We follow the description of the $2$-transitive actions of $G$ given in \cite[\S7.7]{DixonMortimer}.

Take a vector space $V$ over $\Fb_2$ with basis $\{v_1,\dots,v_m,w_1,\dots,w_m\}$; let $\psi$ be the bilinear form such that
\[
\psi(v_i,w_j) = \delta_{ij}, \; \psi(v_i,v_j) = \psi(w_i,w_j) = \psi(w_i,v_j) = 0,
\]
and let $G$ be the subgroup of $\GL(V)$ preserving the nondegenerate symplectic form
\[
\varphi: (u,v) \mapsto \psi(u,v)-\psi(v,u).
\]
Write $\Omega$ for the set of functions $\theta: V \rightarrow \Fb_2$ such that
\[
\forall u,v \in V: \varphi(u,v) = \theta(u+v) - \theta(u) - \theta(v). 
\]
Equivalently, $\Omega$ consists of quadratic forms on $V$ that can be written as $\theta_a: u \mapsto \psi(u,u)+\varphi(u,a)$ for $a \in V$.

We now have an action of $G$ on $\Omega$ given by $g.\theta_a = \theta_{ga}$, or equivalently $g.\theta_a(u) = \theta_a(g\inv u)$.  We find that $\Omega$ splits into two $G$-orbits
\[
\Omega_+ := \{\theta_a \mid \psi(a,a) = 0\} \text{ and } \Omega_- := \{\theta_a \mid \psi(a,a)=1\},
\]
The actions of $G$ on $\Omega_+$ and $\Omega_-$ are both faithful and represent the two standard $2$-transitive actions of $G$, see \cite[Theorem~7.7A]{DixonMortimer}.  Thus as a permutation group we can take $G$ to be given by its action on $\Omega_{\epsilon}$ with $\epsilon \in \{+,-\}$.

Given $a \in V$, write $a^\perp = \{b \in V \mid \varphi(b,a)=0\}$.  Then $a^\perp$ is a subspace of $V$; moreover, since $\varphi$ is nondegenerate, if $a \neq 0$ then $a^\perp$ has codimension $1$, and if $a \neq b$ then $a^\perp \neq b^\perp$.  Given $a \in V \setminus \{0\}$ we claim that the pointwise fixator $K_a$ of $a^\perp$ in $G$ is cyclic of order $2$, namely $K_a = \grp{t_a}$ where $t_a: u \mapsto u+\varphi(u,a)a$.  On the one hand it is clear that $t_a$ fixes $a^\perp$ pointwise, and it is easy to check that $t_a$ is an involution in $G$ (\cite[Exercise~7.7.5]{DixonMortimer}).  On the other hand, given $g \in K_a$ then $gu-u$ is constant as $u$ ranges over the nontrivial coset of $a^\perp$, so we can write $g: u \mapsto u+\varphi(u,a)c$ for some $c \in V$. Since $g \in G$,
 \[
 \forall u,v \in V: \varphi(u+\varphi(u,a)c,v+\varphi(v,a)c) = \varphi(u,v),
 \]
 in other words 
 \[
 \forall u,v \in V: \varphi(u,a)\varphi(v,c) =  \varphi(v,a)\varphi(u,c),
 \]
 so $c \in \{0,a\}$ and $g \in \grp{t_a}$.
 
We can take our point stabilizer of the action of $G$ on $\Omega_\epsilon$ to be the stabilizer $G(\theta_a)$ of the quadratic form $\theta_a:u \mapsto \psi(u,u)+\varphi(u,a)$, such that $\theta_a \in \Omega_\epsilon$.  Let $\theta_b$ be some other point in $\Omega_\epsilon$. Given $g \in G(\theta_a,\theta_b)$, then $g$ fixes $\theta_a + \theta_b$, from which we obtain the equation
\[
\forall u \in V: \varphi(u,a+b) = \varphi(g\inv u,a+b),
\]
so $G(\theta_a,\theta_b)$ preserves the subspace $W = (a+b)^\perp$.  By \cite[Lemma~7.7A]{DixonMortimer} we know that $s = t_{a+b}$ swaps $\theta_a$ and $\theta_b$, so it does not belong to $G(\theta_a,\theta_b)$.  In particular, $G(\theta_a,\theta_b) \cap K_{a+b} = \triv$, so $G(\theta_a,\theta_b)$ acts faithfully on $W$ and hence commutes with $s$.  Thus the setwise stabilizer $G(\{\theta_a,\theta_b\})$ takes the form $G(\theta_a,\theta_b) \times \grp{s}$.
 
Given $L \le G(\theta_a)$ such that $G$ has $2$-by-block-transitive action on $G/L$, we see by Lemma~\ref{lem:LDC_2pt} that
\[
G(\theta_a) = LsL(\theta_b)s;
\]
however, in the present situation, $LsL(\theta_b)s = L$.  Thus there is no proper extension of $\Omega_{\epsilon}$ to a $2$-by-block-transitive action of $G$.
\end{proof}

\subsection{Projective space actions}

The next case we need to consider is faithful $2$-by-block-transitive actions of groups $G$ with socle $\PSL_{n+1}(q)$ for $n \ge 2$ and point stabilizer $L$, such that the block stabilizer $G_1 \ge L$ is a point stabilizer of the standard action of $G$ on projective $n$-space $P_n(q)$.  (We will deal with the case that $G$ has socle $\PSL_2(q)$ separately; socle $\PSL_3(q)$ also brings up some complications that we will deal with later.)  Since there is no loss of generality in doing so, we will in fact work with groups $Z\SL_{n+1}(q) \le G \le \GaL_{n+1}(q)$, where $Z$ is the group of scalar matrices in $\GL_{n+1}(q)$.

Let us set some hypotheses and notation for this subsection, which will also be reused later.

\begin{hyp}\label{not:psl}
Let $n \ge 2$, let $p$ be a prime, let $q = p^e$ for some $e \ge 1$, let $Z\SL_{n+1}(q) \le G \le \GaL_{n+1}(q)$, where $Z$ is the group of scalar matrices in $\GL_{n+1}(q)$.  Let $\mu$ be a generator of $\Fb^*_q$.  Given $H \le \GaL_{n+1}(q)$ write $H_{\GL} := H \cap \GL_{n+1}(q)$, and write $e_H:= |H:H_{\GL}|$.  Then we note that that $e_H$ is the largest order of field automorphism in $H\GL_{n+1}(q)$.

Let $G$ act on the standard $(n+1)$-dimensional space $V = \Fb^{n+1}_q$, with $P_n(q)$ the corresponding projective $n$-space, and let $W$ be the group of elements of $\SL_{n+1}(q)$ that fix pointwise the spaces $V/\alpha_0$ and $\alpha_0$.  Write $\GL_n(q)$ for the subgroup of elements of $\GL_{n+1}(q)$ that fix the subspace $\alpha_0 = \grp{v_0}_q$ pointwise and preserve the subspace $\grp{v_1,\dots,v_n}_q$, and write $\GL_1(q)$ for the subgroup of elements of $\GL_{n+1}(q)$ that fix the subspace $\grp{v_1,\dots,v_n}_q$ pointwise and preserve $\alpha_0$.  Take the subgroup $\SL_n(q)$ of elements of $\GL_n(q)$ of determinant $1$, and write $M = W \rtimes Z\SL_n(q)$.  We see that the linear part of $G(\alpha_0)$ satisfies
\[
M \unlhd G(\alpha_0)_{\GL} \le W \rtimes (\GL_n(q) \times \GL_1(q)).
\]
Let $s$ be the element of $\SL_{n+1}(q) \le G$ that swaps $v_0$ and $v_1$, sends $v_2$ to $-v_2$ and fixes $v_3,\dots,v_n$.  Thus we have
\[
G(\{\alpha_0,\alpha_1\}) = G(\alpha_0,\alpha_1) \rtimes \grp{s}.
\]
Given $g \in \GL_{n+1}(q)$, we define $\mathrm{Pdet}(g) = \det(gZ)$, regarded as an element of $\Fb^*_q/\grp{\mu^{n+1}}$.

Let $Z \le L \le G(\alpha_0)$; we will be considering for which $L$ the action of $G$ on $G/L$ is $2$-by-block-transitive.  In anticipation of appealing to Lemma~\ref{lem:LDC_2pt} we number the following equations for reference, which may or may not be satisfied:
\begin{equation}\label{eq:linear_mgml}
G(\alpha_0) = LsL(\alpha_1)s;
\end{equation}
\begin{equation}\label{eq:linear_2pt}
G(\alpha_0,\alpha_1) = L(\alpha_1)sL(\alpha_1)s.
\end{equation}
\end{hyp}

\begin{defn}\label{def:psl}
Under Hypothesis~\ref{not:psl}, we will say the action of $G$ on $G/L$ is a \defbold{projective-determinant (PD)} action if it is $2$-by-block-transitive and $M \le L \le G(\alpha_0)$.

If $n=2$, we will say the action of $G$ on $G/L$ is \defbold{quadratic-extended projective plane type (QP)} if it is $2$-by-block-transitive, we have $Z \le L \le G(\alpha_0)$, and writing $G(\alpha_0) \le W \rtimes \GL_2(q) \times \GL_1(q)$ as before, then $LW/W$ normalizes a Singer cycle in $\GL_2(q)W/W$.  (See for instance \cite[\S2]{Hestenes} for some basic information about Singer cycles in linear groups.)  Equivalently,
\[
Z \le L \le WZ\GaL_1(q^2) \cap G(\alpha_0),
\]
where $\GaL_1(q^2)$ acts in the natural way on $\grp{v_1,v_2}_q$ (with respect to some $\Fb_{q^2}$-field structure compatible with the $\Fb_q$-vector space structure).  The terminology will be motivated later, see Example~\ref{ex:qepp}.

Note that the group $WZ\GaL_1(q^2) \cap G(\alpha_0)$ is specified up to $G(\alpha_0)$-conjugacy.  Specifically, $WZ$ is normal in $G(\alpha_0)$ (since $Z$ is central and $WZ/Z$ is the socle of $G(\alpha_0)/Z$), whereas $\GaL_1(q^2)WZ/WZ$ is the normalizer in $\GaL_3(q)(\alpha_0)/WZ \cong \GaL_2(q)$ of a Singer cycle; the Singer cycle is unique up to $\GL_2(q)$-conjugacy, and the conjugation action of $\N_{G(\alpha_0)}(\GL_2(q))$ accounts for all inner automorphisms of $\GL_2(q)$.

If the action of $G$ on $G/L$ is $2$-by-block-transitive, but neither projective-determinant nor QP, we will say it is \defbold{exceptional}.

We carry over the terminology of PD, QP and exceptional actions to the quotient $G/Z$ in the obvious way.
\end{defn}

We first note that PD actions of $G/Z$ are never sharply $2$-by-block-transitive.  For some later results it will also be useful to record the structure of the stabilizer of a pair of lines and of the group $M \cap sMs$.

\begin{lem}\label{lem:psl_2pt}
Assume Hypothesis~\ref{not:psl} and write $\ol{G} = \grp{G,\GL_{n+1}(q)}$.
\begin{enumerate}[(i)]
\item Let $W_i$ be the kernel of the action of $H:=\ol{G}(\alpha_0,\alpha_1)$ on $(V/\alpha_i) \oplus \alpha_i$.  Then $W_i$ is a normal subgroup of $H$ of order $q^{n-1}$, with $W_0 = W(\alpha_1)$.  We then have an $\grp{s}$-invariant normal subgroup $Z^* = W_0 \times W_1 \times Z$ of $H_\GL$, and we have
\[
H = (Z^* \rtimes (K \rtimes \grp{h})) \rtimes \grp{y'_0},
\]
where $K$ is a copy of $\GL_{n-1}(q)$ acting on $\grp{v_2,\dots,v_n}_q$ and fixing $v_0$ and $v_1$; $h$ is the element of $H_\GL$ that sends $v_1$ to $\mu v_1$ and $v_2$ to $\mu\inv v_2$, and fixes $v_0$ and $v_3,\dots,v_n$; and $y'_0$ is a field automorphism of order $e_G$.
\item We have
\[
M \cap sMs = Z^* \rtimes (\SL_{n-1}(q) \rtimes \grp{h^{a_0}}),
\]
where $a_0 = (q-1)/\mathrm{gcd}(q-1,n+1)$.  In particular,
\[
|M \cap sMs:Z| = q^{2(n-1)}\mathrm{gcd}(q-1,n+1)|\SL_{n-1}(q)|.
\]
\item If $G$ has a PD action on $G/L$, then the action of $G/Z$ on $G/L$ is not sharply $2$-by-block-transitive.
\end{enumerate}
\end{lem}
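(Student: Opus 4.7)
My plan is to set up matrix coordinates once for $\ol{H}_{\GL}$ and then use them uniformly in all three parts. With respect to the ordered basis $(v_0, v_1, v_2, \ldots, v_n)$, every element of $\ol{H}_{\GL}$ is block-upper-triangular and can be written as a tuple $(\lambda_0, \lambda_1, b_0, b_1, A)$, corresponding to
\[
\begin{pmatrix} \lambda_0 & 0 & b_0^T \\ 0 & \lambda_1 & b_1^T \\ 0 & 0 & A \end{pmatrix}
\]
with $\lambda_i \in \Fb_q^*$, $b_i \in \Fb_q^{n-1}$, $A \in \GL_{n-1}(q)$. For part (i), the group $W_i$ consists of those tuples with $\lambda_0 = \lambda_1 = 1$, $A = I$, $b_{1-i} = 0$ and $b_i$ arbitrary; a one-line conjugation calculation shows $W_i$ is an elementary abelian normal subgroup of order $q^{n-1}$. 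Pairwise trivial intersections between $W_0$, $W_1$ and $Z$ give $Z^* = W_0 \times W_1 \times Z$. The subset of tuples with $\lambda_0 = 1$ and $b_0 = b_1 = 0$ is a complement to $Z^*$ in $\ol{H}_{\GL}$, isomorphic to $\Fb_q^* \times \GL_{n-1}(q)$; identifying $K$ with the $\lambda_1 = 1$ part and using the specified $h$ produces the decomposition $K \cdot \grp{h}$ with $K \cap \grp{h} = \{e\}$. Finally, $y_0$ fixes the standard flag and therefore normalizes $\ol{H}_{\GL}$, giving $\ol{H}/\ol{H}_{\GL} = \grp{y_0}$ of order $e_G$.

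For part (ii), since $M \le G(\alpha_0)$ and $sMs \le G(\alpha_1)$, the intersection $M \cap sMs$ sits inside $\ol{H}_{\GL}$, so the coordinates from (i) apply. Writing an arbitrary element of $M = W \rtimes Z\SL_n(q)$ as
\[
\begin{pmatrix} \zeta & \zeta c^T A' \\ 0 & \zeta A' \end{pmatrix}, \quad \zeta \in \Fb_q^*,\ c \in \Fb_q^n,\ A' \in \SL_n(q),
\]
and imposing fixation of $\alpha_1$, a block expansion forces $A'$ to preserve $\grp{v_1}$ and the first entry of $c$ to vanish. Reading off the resulting $(\lambda_0, \lambda_1, b_0, b_1, A)$ and using $A' \in \SL_n(q)$ gives the single constraint $\det A = \lambda_0^n/\lambda_1$, with $b_0, b_1$ free. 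Conjugation by $s$ sends $(\lambda_0, \lambda_1, b_0, b_1, A)$ to $(\lambda_1, \lambda_0, Db_1, Db_0, DAD)$ with $D = \mathrm{diag}(-1, 1, \ldots, 1)$; since $\det(DAD) = \det A$, the resulting constraint on $sMs \cap \ol{H}$ is $\det A = \lambda_1^n/\lambda_0$. Dividing the two equations yields $(\lambda_1/\lambda_0)^{n+1} = 1$, with $\lambda_1/\lambda_0$ constrained to the order-$\mathrm{gcd}(q-1, n+1)$ subgroup of $\Fb_q^*$; with that in hand the determinant is determined and $A$ ranges over a coset of $\SL_{n-1}(q)$. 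Multiplying parameter counts gives $|M \cap sMs : Z|$, and identifying the $\lambda_1/\lambda_0$-component with powers of $h$ produces the semidirect decomposition $Z^* \rtimes (\SL_{n-1}(q) \rtimes \grp{h^{a_0}})$.

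Part (iii) is immediate: if $M \le L$ then $(M \cap sMs)/Z \le (L \cap sLs)/Z$, the latter being the stabilizer in $G/Z$ of the distant pair $(L, sL)$. By (ii), $(M \cap sMs)/Z$ has order at least $q^{2(n-1)} \ge 4$, so the pair stabilizer is nontrivial and the action is not sharply $2$-by-block-transitive. The main obstacle is part (ii): correctly reducing the ``lies in $M$'' condition to a single determinant equation after imposing $\ol{H}$-membership, and then tracking how $s$-conjugation produces the dual constraint so that the clean $(n+1)$-th root of unity condition emerges. Once the two determinant constraints are in hand the parameter count and the identification of $h^{a_0}$ with the order-$\mathrm{gcd}(q-1, n+1)$ cyclic factor are routine.
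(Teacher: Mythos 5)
Your proof is correct and follows essentially the same route as the paper: both rest on the block-triangular coordinates for $\ol{G}_\GL(\alpha_0,\alpha_1)$ set up in (i), and in (ii) both reduce membership in $M \cap sMs$ to a determinant condition that yields $(\lambda_1/\lambda_0)^{n+1}=1$, exactly the paper's equation $\mu^{a(n+1)}=1$. The only cosmetic difference is that you parametrize all of $M \cap \ol{H}$ at once via the single constraint $\det A = \lambda_0^n/\lambda_1$ rather than first splitting off $Z^*$ and working in the complement $K\grp{h}$; part (iii) is handled identically.
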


\begin{proof}
(i)
Let $y'_0 = \phi^{e/e_G}$ where $\phi$ is the standard Frobenius automorphism.  We see that $\ol{G} = \GL_{n+1}(q) \rtimes \grp{y'_0}$ and hence $H = H_\GL \rtimes \grp{y'_0}$.  From now on, let us focus on $H_\GL$.

It is clear from the definitions that $W_0$ and $W_1$ are normal in $H$, that $W_0 = W(\alpha_1)$, and that $W_1 = sW_0s$.  Since the $W$-orbit of $\alpha_1$ has size $q$, we have $|W_i| = |W|/q = q^{n-1}$.  From the definitions we see that $W_0 \cap W_1$ acts trivially on $V$ and hence is trivial; moreover, $Z \cap W_0W_1 = \triv$, so we obtain an $\grp{s}$-invariant normal subgroup $Z^* = W_0 \times W_1 \times Z$ of $H$ contained in $H_\GL$.  We can split $H_\GL$ as a semidirect product $Z^* \rtimes C$ by taking $C$ to be the group of matrices that fix $v_0$ and stabilize the spaces $\alpha_1$ and $\grp{v_2,\dots,v_n}_q$.  It is then clear that $C$ can be decomposed as $\GL_{n-1}(q) \rtimes \grp{h}$ as described.  (The choice of $h$ here is made in order to obtain a cyclic complement to $K$ in $C$ that acts with determinant $1$ on $V/\alpha_0$.)

(ii)
Since $Z^*$ is $\grp{s}$-invariant we have $Z^* \le M \cap sMs$, and it is also clear that $M \cap sMs$ contains all elements of $K$ of determinant $1$.  It remains to describe $K' := M \cap sMs \cap (K \times \grp{h})$.

Let $g \in K'$.  Then $g=kh^a$ for some $k \in K$ and $a \in \Zb$, so $g$ fixes $\grp{v_0}_q$ and acts as multiplication by $\mu^{a}$ on $\grp{v_1}_q$.  Since $g \in M \cap sMs$, we also have $g = z_0g_0 = z_1g_1$, where $z_0,z_1 \in Z$ and $g_i$ acts with determinant $1$ on $\alpha_i$ and $V/\alpha_i$.  In particular, we see that $z_0=1$ and $z_1$ is scalar multiplication by $\mu^a$.  Since $g$ and $h^a$ both act with determinant $1$ on $V/\alpha_0$, we see that $\det(k)=1$.  At the same time, calculating the determinant of $g$ on $V/\alpha_1$ in two ways gives
\[
\mu^{-a} = \mu^{an},
\]
so $\mu^{a(n+1)}=1$, in other words, $\mu^a$ is a power of $\mu^{a_0}$.  Thus $g \in \SL_{n-1}(q) \rtimes \grp{h^{a_0}}$.

On the other hand, we have $h \in M$, since $h$ acts with determinant $1$ on $\alpha_0$ and on $V/\alpha_0$.  At the same time, the action of $h^{a_0}$ on $\alpha_1$ has determinant $\mu^{a_0(n+1)} = 1$, and similarly on $V/\alpha_1$; thus $h^{a_0} \in M \cap sMs$.  We deduce that $K' = \SL_{n-1}(q) \rtimes \grp{h^{a_0}}$.  The value of $|M \cap sMs:Z|$ is now clear.

(iii) is clear from the fact that $|M \cap sMs:Z| > 1$.
\end{proof}

Our next aim is to show that if $n \ge 3$, all $2$-by-block-transitive actions are PD, other than when $G = \PSL_5(2)$, a special case that has already been dealt with.

\begin{prop}\label{prop:psl:nonstandard}
Assume Hypothesis~\ref{not:psl}.  Let $Z \le L \le G(\alpha_0)$ be such that $G$ has $2$-by-block-transitive action on $G/L$, with block stabilizer $G(\alpha_0) \ge L$.  Then $W \le L$ and one of the following holds:
\begin{enumerate}[(a)]
\item We have $M \le L$, in other words, the action is PD.
\item $n=2$ and the action of $L$ on $(V/\alpha_0) - \{0\}$ is a transitive subgroup of $\GaL_2(q)$ that does not contain $\SL_2(q)$.
\item $n=4$, $q=2$ and $L$ is the subgroup $C^4_2 \rtimes \Alt(7)$ of $G=\PSL_5(2)$ given in Lemma~\ref{lem:pgl52}.
\end{enumerate}
\end{prop}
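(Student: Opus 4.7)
My strategy is to combine the $2$-by-block-transitivity equation from Lemma~\ref{lem:LDC_2pt} with Hering's classification (Lemma~\ref{lem:affine_2trans}) applied to the induced semilinear action of $L$ on $V/\alpha_0$, and to establish $W \le L$ via a module-theoretic argument. First, the $2$-by-block-transitivity of $G$ on $G/L$ projects onto blocks to force $L$ to act transitively on $P_n(q) \setminus \{\alpha_0\}$. This set fibres over $P_{n-1}(q) = P(V/\alpha_0)$ with $W$-orbit fibres of size $q$, so $L$ is transitive on $P_{n-1}(q)$ and the stabilizer of each line in $L$ is transitive on its fibre. Writing $\hat L \le \GaL_n(q)$ for the image of $L$ acting on $V/\alpha_0$ via the projection modulo $W$, and using $Z \le L$ to see that $\hat L$ contains all scalars $\Fb^*_q$, line-transitivity upgrades to transitivity of $\hat L$ on $V/\alpha_0 \setminus \{0\}$, whence Hering's theorem classifies $\hat L$.

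Next I show $W \le L$. The conjugation action of $L$ on the abelian normal subgroup $W$ factors through $\hat L$ and identifies $W$ with the dual of the natural module $V/\alpha_0$ (twisted by the scalar character of $L$ on $\alpha_0$). Transitivity of $\hat L$ on nonzero vectors forces irreducibility of $V/\alpha_0$ over $\Fb_q$, hence of $W$, so the $L$-invariant subgroup $L \cap W$ is either trivial or all of $W$. To rule out $L \cap W = \{1\}$ I combine the equation $G(\alpha_0) = sL(\alpha_1)sL$ with the double-coset size formula of Corollary~\ref{cor:double_coset} and the structural description of $G(\alpha_0,\alpha_1)$ from Lemma~\ref{lem:psl_2pt}: if $L$ has no $W$-content, the resulting $L$ is too small for the required double-coset equation to hold.

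Having fixed $W \le L$, I run through Hering's theorem on $\hat L$ case by case. In case (ii), $\SL_n(q) \unlhd \hat L$, and combining with $W \le L$ and $Z \le L$ yields $M \le L$, giving case (a). In case (i), $\hat L \le \GaL_1(q^n)$: for $n = 2$ this is precisely case (b), while for $n \ge 3$ the order $|\GaL_1(q^n)|$ is too small for Corollary~\ref{cor:double_coset} to hold, ruling out this configuration. Hering cases (iii) (where $\mathrm{Sp}_{2m}(q) \unlhd \hat L$) and (iv) (where $\mathrm{G}_2(q) \unlhd \hat L$) are excluded by analogous order/index arguments. Case (v) consists of finitely many exceptional configurations: for $n = 2$ they also land in case (b), and for $n \ge 3$ a direct check (using Lemma~\ref{lem:pgl52} for $\PSL_5(2)$) shows only $n = 4$, $q = 2$, $\hat L \supseteq \Alt(7)$ survives, which corresponds to case (c).

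The main obstacle is the uniform exclusion of $L \cap W = \{1\}$ together with the numerical elimination of the various Hering subcases for $n \ge 3$; both depend on delicate applications of Corollary~\ref{cor:double_coset} in combination with the two-point stabilizer calculation from Lemma~\ref{lem:psl_2pt}.
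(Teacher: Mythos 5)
Your outline shares the paper's skeleton (transitivity of the induced action on $(V/\alpha_0)\setminus\{0\}$, a Hering-type classification, then $W \le L$), but the two steps you yourself flag as ``delicate'' are exactly where the proposal breaks down, and the fixes are not order arguments. First, ruling out $L \cap W = \triv$: your claim that such an $L$ is ``too small'' for Corollary~\ref{cor:double_coset} is false for $n \ge 4$. With $L \cap W = \triv$ and $\theta(L) \supseteq \Fb^*_q\SL_n(q)$ one still has $|L|$ of order roughly $(q-1)^2|\GL_n(q)|e$, so $|L|^2$ exceeds $|G| - |G(\alpha_0)| \approx q^{n+1}(q^n-1)|\GL_n(q)|e$ by a wide margin, and even the $p$-part divisibility test only fails for $n \le 3$ (compare $q^{n(n-1)}$ against $q^{n+1+n(n-1)/2}$). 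The paper has to work much harder here: it extracts from (\ref{eq:linear_2pt}) that $L(\alpha_1)$ induces $\SL(V/\beta)$ on $V/\beta$, builds the normal $p$-subgroup $N$ of $G^*$ of order $q^{2n-1}$ containing $W$, shows $N = W \rtimes R$ with $R \le L^*$, and derives a contradiction from the fact that $\theta(L^*)$ acts transitively by conjugation on $\theta(N)\setminus\{1\}$ yet preserves the proper nontrivial subgroup $\theta(R_2)$. Relatedly, your preliminary claim that $L \cap W \in \{\triv, W\}$ by ``irreducibility of $W$ over $\Fb_q$'' is not justified: $L \cap W$ is only an $\Fb_p$-subgroup invariant under the scalar-twisted conjugation action, and irreducibility as an $\Fb_q$-module says nothing about such subgroups. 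For $n=2$ the paper instead counts: $|L\cap W| = p^{e'}(q+1-p^{e''})$ must be a $p$-power, forcing $e''=0$, after first showing $L \cap W \neq \triv$ by comparing $p$-parts.

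Second, your elimination of Hering cases (iii), (iv) and the residue of (i), (v) for $n \ge 3$ by ``analogous order/index arguments'' is not substantiated and does not go through on orders alone: for $\mathrm{Sp}_{2m}(q) \unlhd \hat L$ one has $|\mathrm{Sp}_{2m}(q)|^2 > |\GL_{2m}(q)|$, so the inequality implicit in Corollary~\ref{cor:double_coset} is satisfied and you would need a primitive-prime-divisor divisibility argument (with its Zsygmondy caveats and a separate discussion of the field-automorphism contribution) to get a contradiction. The paper sidesteps all of this with a single observation that is missing from your proposal: since $s$ acts trivially on $V/\beta$, equation (\ref{eq:linear_2pt}) forces the group induced by $L(\alpha_1)$ on $V/\beta$ to contain $\GL_{n-1}(q)$. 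Feeding that much stronger condition into Hering's theorem (this is Lemma~\ref{lem:affine_gl}) collapses the case analysis to $A \ge \SL_n(q)$, or $n=2$, or the single $\Alt(7)$ configuration, essentially because the other Hering candidates cannot have a point stabilizer with a quotient involving $\PSL_{n-1}(q)$. Without that extra condition your case analysis is substantially harder and, as written, incomplete.
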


The next lemma will be used in the proof of Proposition~\ref{prop:psl:nonstandard} to show that $L$ contains $\SL_n(q)$, except possibly when $n=2$ or $G = \PSL_5(2)$.

\begin{lem}\label{lem:affine_gl}
Let $n \ge 3$, let $p$ be a prime, let $q = p^e$ for some $e \ge 1$, and let $G \le \GaL_n(q)$ be a group acting on $V = \Fb^n_q$.  Suppose that $G$ acts transitively on $V \setminus \{0\}$ and that for all $v \in V$, the action induced by $G_{\grp{v}_q}$ on $V/\grp{v}_q$ contains $\GL_{n-1}(q)$.  Then either $G \ge \SL_n(q)$, or we have $q=2$, $n=4$ and $G = \Alt(7)$.
\end{lem}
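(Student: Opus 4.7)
The plan is to apply Hering's theorem (Lemma~\ref{lem:affine_2trans}) to the $2$-transitive affine group $V \rtimes G$, and then use the line-stabilizer hypothesis as an order constraint to winnow the list of cases. The essential preliminary observation is that the hypothesis forces $|G_{\grp{v}_q}| \ge |\GL_{n-1}(q)|$, since $G_{\grp{v}_q}$ surjects onto a group of at least this order on $V/\grp{v}_q$; combined with $G$-transitivity on the $(q^n-1)/(q-1)$ lines, this gives
\[
|G| \ge \frac{q^n - 1}{q - 1}\,|\GL_{n-1}(q)|,
\]
a lower bound of order $q^{n^2 - n}$ that should be incompatible with all but the two claimed outcomes.

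Running through the Hering cases, I would first dispatch case (i) by noting $|G| \le ne(q^n-1)$, which fails the lower bound already for $n \ge 3$. In case (ii), $\SL_m(q') \unlhd G$ with $q'^m = q^n$: the natural field satisfies $\Fb_{q'} \supseteq \Fb_q$ (so that $\SL_m(q')$ genuinely lies in $\GaL_n(q)$), hence $q' = q^k$ and $n = mk$; one then checks that the normalizer of $\SL_m(q^k)$ in $\GaL_n(q)$ has order of the order $q^{n^2/k}$, so the lower bound forces $k \le n/(n-1) < 2$, giving $k = 1$ and $G \ge \SL_n(q)$ as desired. Cases (iii) and (iv) are handled similarly: with the natural field again $\Fb_{q^k}$, the maximal possible orders are of order $q^{mn + n/2}$ and $q^{7n/3}$ respectively, both falling short of $q^{n^2-n}$ for $n \ge 3$, with the parity of $n = 2mk$ conveniently excluding the borderline value $n = 3$ in case (iii), and $n = 6k \ge 6$ in case (iv).

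Case (v), the finite list of exceptional affine $2$-transitive groups in dimensions $2, 4, 6$, requires direct inspection; the hypothesis $n \ge 3$ leaves only dimensions $4$ and $6$. For $n = 4, q = 2$ the lower bound is $15 \cdot |\GL_3(2)| = 2520$, achieved only by $G = \Alt(7)$; for $n = 4, q = 3$ the lower bound exceeds $4 \cdot 10^5$; for $n = 6, q = 3$ the lower bound exceeds $10^{13}$. The latter two eliminate all exceptional candidates, whose nonabelian composition factors lie in $\{\Alt(5), \Alt(6), \Alt(7), \PSL_2(13)\}$ and which therefore have order at most a few thousand. For $G = \Alt(7) \le \GL_4(2) \cong \Alt(8)$, I would verify the hypothesis directly: $|\Alt(7)_v| = 2520/15 = 168 = |\GL_3(2)|$, and via the embedding $\Alt(7) \hookrightarrow \Alt(8)$ one checks that $\Alt(7)_v$ meets the transvection subgroup $C_2^3 = \ker(\GL_4(2)_v \to \GL(V/\grp{v}_q))$ trivially, so $\Alt(7)_v$ maps isomorphically onto $\GL_3(2)$ in its action on $V/\grp{v}_q$.

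The main technical obstacle will be ensuring the order inequalities are sharp enough in the small cases $n \in \{3, 4\}$, where the asymptotic estimates leave little margin and one must resort to exact formulas for $|\GL_{n-1}(q)|$ and the various Chevalley-group orders. A secondary subtlety is verifying that the natural field $\Fb_{q'}$ appearing in Hering's output contains $\Fb_q$ in each case, which justifies the parameterization $q' = q^k$ with $k \ge 1$; this holds because the elements of a normal Lie-type subgroup are $\Fb_{q'}$-linear, and compatibility with the ambient $\Fb_q$-semilinear action forces $\Fb_q \subseteq \Fb_{q'}$.
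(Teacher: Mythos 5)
Your proposal is correct and reaches the right conclusion, but it takes a genuinely different route through the Hering case analysis than the paper's proof. You drive everything off the single numerical bound $|G| \ge \frac{q^n-1}{q-1}|\GL_{n-1}(q)|$, whereas the paper argues numerically only for $n=3$ (after first reducing to $G \le \GaL_1(q^3)$); for $n \ge 4$ it instead exploits the structural fact that the hypothesis forces $G$ to involve $\PSL_3(q)$ as a section. Concretely, in the field-extension case the paper uses irreducibility of the induced $\GL_{n-1}(q)$ on $V/\grp{v}_q$ to force $\grp{v}_{q^d}=V$ and hence $G$ soluble; in the $\mathrm{Sp}_{2m}$ and $\mathrm{G}_2$ cases it compares $p$-parts and $p'$-parts of the point stabilizer's order with $|\SL_{n-1}(q)|$; and in the exceptional case it observes that the available nonabelian composition factors $\Alt(5),\Alt(6),\Alt(7),\PSL_2(13)$ cannot involve $\PSL_3(q)$ except via $\Alt(7) \ge \PSL_3(2)$. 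Your approach is more uniform and elementary, at the price of being numerically tight exactly where you say it is: for instance $|N_{\GaL_4(q)}(\mathrm{Sp}_4(q))| \approx e(q-1)|\mathrm{Sp}_4(q)|$ falls short of the lower bound only by a factor of roughly $q/e$, so the exact order formulas really are needed there. Two small points of care: (1) in Hering's case (ii) you correctly note that $\Fb_q \subseteq \Fb_{q'}$ must be justified --- the clean argument is that $\SL_m(q')$ is perfect, hence lies in $\GL_n(q)$ and commutes with the $\Fb_q$-scalars, which therefore lie in its centralizer field $\Fb_{q'}$; (2) in case (v) the stated dimension ($2$, $4$ or $6$) is relative to the field of Hering's parameterization, which need not be $\Fb_q$ (a dimension-$6$ group over $\Fb_3$ could a priori sit inside $\GaL_3(9)$ with $n=3$), so $n\ge 3$ does not by itself discard the dimension-$2$ entries; your order bound nevertheless eliminates all of them, since every exceptional $G$ there has order far below $\frac{q^n-1}{q-1}|\GL_{n-1}(q)|$ for every admissible $(n,q)$ except $\Alt(7)$ with $(n,q)=(4,2)$.
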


\begin{proof}
Since $G$ acts transitively on $V \setminus \{0\}$, the associated affine group $V \rtimes G$ is $2$-transitive, and we can appeal to Lemma~\ref{lem:affine_2trans}.  We may also assume for a contradiction that $G$ does not contain $\SL_n(q)$.  Let $H$ be the action induced by $G_{\grp{v}_q}$ on $V/\grp{v}_q$.

Let us first deal with the case $n=3$.  We see that up to conjugacy, every transitive subgroup of $\GL_3(q)$ that does not contain $\SL_3(q)$ is contained in $\GaL_1(q^3)$.  However, we see that in order to have $H \ge \GL_2(q)$, we would still need $|G:G_{\grp{v}_q}|=q^2+q+1$ and $|G_{\grp{v}_{q}}|$ a multiple of $(q^2-1)(q^2-q)$, and then $|G|$ is too large to be a subgroup of $\GaL_1(q^3)$.  So from now on we may assume $n \ge 4$.  In particular, this means $G$ must involve the nonabelian simple group $\PSL_3(q)$ as a quotient of a subgroup. 

We next consider the possibility that $G$ can be interpreted as a semilinear group of smaller dimension over a larger field, say $G \le \GaL_m(q^d)$ where $dm=n$ and $d > 1$, and we identify $V$ with $W = \Fb^m_{q^d}$ as an additive group.  In this case $G_{\grp{v}_q}$ also stabilizes the $\Fb_{q^d}$-linear span $\grp{v}_{q^d}$ of $v$.  Since $G$ acts irreducibly on $V/\grp{v}_q$ as an $\Fb_q$-vector space, we deduce that $\grp{v}_{q^d} = V$, that is, $d=n$ and $m=1$.  In other words, $W$ is a finite field and each element of $G$ acts as a combination of a multiplication and a field automorphism.    But then $G$ is soluble, a contradiction.

Next, consider the case that $n=2m$ and $p=2$, and $\mathrm{Sp}_{2m}(q) \unlhd G$.  Then $m \ge 2$ and $G/\mathrm{Sp}_{2m}(q)$ is soluble, and we would need $\mathrm{Sp}_{2m}(q)_{\grp{v}_{q}}$ to have a quotient $\SL_{n-1}(q) \le Q \le \GaL_{n-1}(q)$.  For $m \ge 3$ this is easily ruled out by considering the powers of $p$ dividing $|\mathrm{Sp}_{2m}(q)|$ and $|\SL_{2m-1}(q)|$, so we can take $m=2$.  In that case the $p'$-part of the order of $\mathrm{Sp}_{2m}(q)_{\grp{v}_{q}}$ is
\[
\frac{(q^2-1)(q^4-1)}{|P_3(q)|} = (q^2-1)(q-1),
\]
whereas the $p'$-part of $|\SL_3(q)|$ is $(q^2-1)(q^3-1)$, so this case is also ruled out.

Next consider the case that $n=6$ and $\mathrm{G}_2(q) \unlhd G$.  Similar to the last paragraph, we would need $\mathrm{G}_2(q)_{\grp{v}_{q}}$ to have a quotient $\SL_5(q) \le Q \le \GaL_5(q)$, which is easily ruled out by considering the powers of $p$ dividing $|\mathrm{G}_2(q)|$ and $|\SL_5(q)|$.

For $G = \Alt(7)$, $n=4$, $q=2$, we have $G_{\grp{v}_q} = \GL_3(2)$ acting faithfully on $V/\grp{v}_q$.

Finally, suppose $G$ is one the remaining exceptional $2$-transitive affine groups of dimension $n$, where $n \in \{4,6\}$.  Then there is only one insoluble composition factor $S$ of $G$ and it is small: we have $S \le \Alt(6)$ or $S = \PSL_2(13)$.  This leaves $\Alt(5)$, $\Alt(6)$ and $\PSL_2(13)$ as the only nonabelian simple groups that can occur as a quotient of a subgroup of $G$.  One sees that this list excludes $\PSL_3(q)$, so we have no further examples.
\end{proof}

\begin{proof}[Proof of Proposition~\ref{prop:psl:nonstandard}]
By Lemma~\ref{lem:LDC_2pt}, $G$ has $2$-by-block-transitive action on $G/L$ with block stabilizer $G(\alpha_0)$ if and only if (\ref{eq:linear_mgml}) is satisfied.  We observe that  (\ref{eq:linear_mgml}) implies (\ref{eq:linear_2pt}).

Since the linear part of $G$ acts $2$-transitively on the lines in $V$, we observe that $e_{G(\alpha_0,\alpha_1)} = e_G$.  We also deduce from (\ref{eq:linear_2pt}) that $G(\alpha_0,\alpha_1) = L(\alpha_1)G_{\GL}(\alpha_0,\alpha_1)$, hence $e_{L(\alpha_1)} = e_{G(\alpha_0,\alpha_1)}$.  
Let $e_p$ be the largest power of $p$ dividing $e_G$, and note that $e_p < q$.

Since $G$ has $2$-by-block-transitive action on $G/L$, with block stabilizer $G(\alpha_0)$, we see that $L$ acts transitively on the nontrivial cosets of $G(\alpha_0)$, in other words $L$ acts transitively on $P_n(q) \setminus \{\alpha_0\}$.  In particular, given $v,v' \in V \setminus \alpha_0$, there is $g \in L$ such that $gv \in \grp{v'}_q$, and then since $L$ contains the scalar matrices, in fact we can ensure $gv=v'$.  Thus if we write $\theta: G(\alpha_0) \rightarrow \GaL_n(q)$ for the action of $G(\alpha_0)$ on $V/\alpha_0$, we see that $A:= \theta(L)$ is transitive on nonzero vectors.

Write $\beta = \grp{v_0,v_1}_q$ and let $\theta_{V/\beta}$ be the action of $G(\beta)$ on $V/\beta$.  Since (\ref{eq:linear_2pt}) is satisfied and $s$ acts trivially on $V/\beta$, and since $G$ contains $Z\SL_{n+1}(q)$, we see that 
\[
\theta_{V/\beta}(L(\alpha_1)) = \theta_{V/\beta}(G(\alpha_0,\alpha_1)) \ge \GL(V/\beta).
\]
Given Lemma~\ref{lem:affine_gl}, we are therefore in one of the following situations:
\begin{enumerate}[(A)]
\item $A \ge \SL_n(q)$;
\item $n = 2$ and $A \ngeq \SL_n(q)$;
\item $n=4$, $q=2$ and $A = \Alt(7)$.
\end{enumerate}

Observe that $W$ can be regarded as a dual $\Fb_q$-vector space to $V/\alpha_0$, interpreting the action of $W$ on $V$ as an inner product as follows: given $v \in V$ and $w \in W$, then $w(v)-v = \lambda_{w,v} v_0$, where the coefficient $\lambda_{w,v} \in \Fb_q$ only depends on $v$ modulo $\alpha_0$.  We can then define $\langle w,v+\alpha_0 \rangle := \lambda_{w,v}$.  In particular, we can identify $W$ with the dual space of $V/\alpha_0$, and then the conjugation action of $l \in L_{\GL}$ on $W$ is given by $\gamma(l)\rho(\theta(l))$, where $\rho$ is the dual (in other words, inverse transpose) representation of the action of $A_{\GL}$ on $V/\alpha_0$, and $\gamma(l)$ is multiplication by the scalar induced by $l$ on $\alpha_0$.  (In particular, note that $\gamma(l)\rho(\theta(l))$ is trivial if and only if $l$ acts on both $V/\alpha_0$ and $\alpha_0$ as multiplication by the same scalar, which happens if and only if $l \in WZ$.)  In case (A), assuming $(n,q) \not\in \{(2,2),(2,3)\}$, we see that the conjugation action of $L_{\GL}$ on $W$ contains the perfect self-dual group $\SL(W)$, so the action is transitive on $W \setminus \{1\}$.

Notice that (C) implies (c) by Lemma~\ref{lem:pgl52}.  If in addition, we have $L \ge W$, then (A) implies (a) and (B) implies (b).  So all that remains is to prove $L \ge W$ in cases (A) and (B).  We finish the proof with a series of claims.

\emph{Claim 1: Suppose $n=2$; then $L \ge W$.}

Assume for a contradiction that $L \cap W = \triv$.   Then we see that the largest power of $p$ dividing $|L|$ is at most $qe_p$, whereas $|G(\alpha_0)|$ is a multiple of $q^3e_p$.  In this context the equation $G(\alpha_0) = sL(\alpha_1)sL$ can only be satisfied if $q^3e_p \le q^2e^2_p$, in other words, $q \le e_p$, which is impossible.  Thus $L \cap W \neq \triv$.  Now consider $W$ as the dual $\Fb_q$-vector space to $V/\alpha_0$, and let $Y$ represent a line (by which we mean a $1$-dimensional subspace) in $W$.  Since $L$ acts transitively on $V/\alpha_0$, we see that $L$ acts transitively on codimension $1$ subspaces, in other words, lines, in $W$.  Thus the intersection $L \cap Y$ has some order $p^{e'}$, where $1 \le e' \le e$, such that $e'$ does not depend on the choice of $Y$.  Write $e'' = e - e'$ and note that $p^{e''} < q$.  As there are $q+1$ lines, and distinct lines intersect only at the origin, we have
\[
|L \cap W| = (q+1)(p^{e'}-1) +1 = qp^{e'}+p^{e'}-q = p^{e'}(q+1-p^{e''}).
\]
At the same time, $L \cap W$ is a subgroup of $W$, so $|L \cap W|$ is a power of $p$, and hence $(q+1-p^{e''})$ is a power of $p$.  This can only happen if $p^{e''}=1$, in other words, $L \ge Y$.  Then since $L$ is transitive on lines, in fact $L \ge W$ as claimed.

\emph{Claim 2: Suppose $n=3$ and $q \in \{2,3\}$; then $L \ge W$.}

If $q=2$ then $|G \setminus G(\alpha_0)| = 2^7 \cdot 3 \cdot 7^2$, so in order to have $G \setminus G(\alpha_0) = LgL$ for some $g \in G$, as in Lemma~\ref{lem:double_coset}, the order of $L$ must be a multiple of $2^4 \cdot 3 \cdot 7 = 336$.  A calculation shows that $G(\alpha_0)$ has no proper subgroups of suitable order.  If $q=3$ then $G$ is $\PSL_4(3)$ or $\PGL_4(3)$, and $|G \setminus G(\alpha_0)|$ is a multiple of $2^4 \cdot 3^7 \cdot 13^2$, so $|L|$ must be a multiple of $2^2 \cdot 3^4 \cdot 13 = 4212$.  A calculation of maximal subgroups reveals that there is only one case where $G(\alpha_0)$ has a proper subgroup of suitable order, namely when $G = \PGL_4(3)$ and $L = G(\alpha_0) \cap \PSL_4(3)$.  However, the latter case is clearly ruled out by Corollary~\ref{cor:LDC_normal}, proving the claim.

\emph{Claim 3: Suppose (A) holds and $\SL_{n-1}(q)$ is perfect; then $L \ge W$.}

Since $L$ acts transitively on $W \setminus \{1\}$, we may suppose for a contradiction that $L \cap W = \triv$.

Recall that $\beta = \grp{v_0,v_1}_q$ and $\theta_{V/\beta}$ is the action of $G(\beta)$ on $V/\beta$; let $\theta_\beta$ be the action of $G(\beta)$ on $\beta$.  Let $G^* = G(\alpha_0,\beta)$ and $L^* = L \cap G^*$.  Since $L$ acts transitively on $P_n(q) \setminus \{\alpha_0\}$, we see that $L^* = L(\beta)$ acts transitively on the set $X$ of lines in $\beta$ other than $\alpha_0$; note that $|X|=q$.  Moreover, since $L^*_\GL$ is normal in $L^*$ of index dividing $e_G$, we see that $X$ is partitioned into $L^*_\GL$-orbits of equal size, with at most $e_p$ orbits in total.  Thus the index $|L^*_\GL:L_\GL(\alpha_1)|$ is a multiple of $q' := q/e_p$.  Since $e_p < q$, we see that $q'$ is a positive power of $p$.

As noted earlier, we have $\theta_{V/\beta}(L(\alpha_1)) \ge \GL(V/\beta)$.  Since $\SL(V/\beta)$ is perfect, it follows that $\theta_{V/\beta}(L_\GL(\alpha_1)) \ge \SL(V/\beta)$.  At the same time, we have 
\[
\theta_{V/\beta}(L_\GL(\alpha_1)) \le \theta_{V/\beta}(L^*_\GL) \le \GL(V/\beta).
\]
Thus the index of $\theta_{V/\beta}(L_\GL(\alpha_1))$ in $\theta_{V/\beta}(L^*_\GL)$ divides $|\GL(V/\beta):\SL(V/\beta)|$ and hence is coprime to $p$.  In particular, the index
\[
|(\ker\theta_{V/\beta} \cap L^*_\GL):(\ker\theta_{V/\beta} \cap L_\GL(\alpha_1))|
\]
is still a multiple of $q'$.  There is therefore an element $h \in \ker\theta_{V/\beta} \cap L^*_\GL$ of $p$-power order that does not stabilize $\alpha_1$.  We see that $h$ also acts trivially on $\beta/\alpha_0$ and $\alpha_0$.

Let $N$ be the group of elements of $G_\GL$ that act trivially on $V/\beta$, $\beta/\alpha_0$ and $\alpha_0$.  Then we see that $N \le M \cap G^*$, so $N \le WL^*$; on the other hand, $N$ is a normal $p$-subgroup of $G^*$ of order $q^{2n-1}$ that contains $W$.  Thus $N$ is a semidirect product $W \rtimes R$ where $R = L^* \cap N$ is a normal subgroup of $L^*$ of order $q^{n-1}$.  We see that the kernel of the action of $R$ on $V/\alpha_0$ is contained in $W$, hence trivial; that is, $R$ acts faithfully on $V/\alpha_0$.  Let $R_2 = R \cap \ker\theta_\beta$.  By the previous paragraph, $R_2$ is properly contained in $R$; we also see that $|R:R_2|\le q$, so $R_2$ is nontrivial.  Now consider the image $\theta(N)$ of $N$ in $\GL(V/\alpha_0)$.  Since $\theta_{V/\beta}(L^*)$ contains $\SL(V/\beta)$, we see that $\theta(L^*)$ acts transitively by conjugation on $\theta(N) \setminus \{1\}$: the proof is similar to the proof that $L$ acts transitively by conjugation on $W \setminus \{1\}$.  But at the same time, $\theta(L^*)$ clearly preserves $\theta(R_2)$ by conjugation, and we have
\[
\triv < \theta(R_2) < \theta(R) \le \theta(N).
\]
This is a contradiction, so we conclude that $L \ge W$.  This completes the proof of Claim 3 and hence the proposition.
\end{proof}

Given $G$ satisfying Hypothesis~\ref{not:psl}, we now characterize the point stabilizers of the PD actions of $G$.

\begin{prop}\label{prop:psl:standard}
Assume Hypothesis~\ref{not:psl} and take $M \le L \le G(\alpha_0)$. Then the following are equivalent:
\begin{enumerate}[(i)]
\item $G$ has $2$-by-block-transitive action on $G/L$, with block stabilizer $G(\alpha_0)$.
\item We have $e_L = e_G$ and $\det(L_\GL) = \det(G_\GL)$.
\item We have $e_L = e_G$ and the index $|G(\alpha_0):L|$ is coprime to $|\mathrm{Pdet}(G_\GL)|$.
\end{enumerate}
\end{prop}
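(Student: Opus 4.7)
My plan is to show that each of (i), (ii), (iii) is equivalent to the degenerate condition $L = G(\alpha_0)$, from which the three-way equivalence is immediate. First, by Lemma~\ref{lem:LDC_2pt} applied to the faithful action of $G/Z$ on $G/L$, condition (i) is equivalent to the double coset equation~(\ref{eq:linear_mgml}): $G(\alpha_0) = sL(\alpha_1)sL$. To analyse this I would introduce the projective determinant $\bar\phi\colon G(\alpha_0)_\GL \to \Fb^*_q/(\Fb^*_q)^{n+1}$, $\bar\phi(g) = \det(g) \bmod (\Fb^*_q)^{n+1}$. A direct computation shows $\ker\bar\phi = M$, since $M$ coincides with the full $\alpha_0$-stabilizer $Z\SL_{n+1}(q)(\alpha_0)$, i.e.\ the set of $g \in G(\alpha_0)_\GL$ with $\det(g) \in (\Fb^*_q)^{n+1}$. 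Moreover $\det(G(\alpha_0)_\GL) = \det(G_\GL)$ by $\SL_{n+1}(q)$-transitivity on $P_n(q)$, so $\bar\phi$ induces an isomorphism $G(\alpha_0)_\GL/M \xrightarrow{\sim} \mathrm{Pdet}(G_\GL)$.

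The key technical step is that $s$-conjugation is trivial modulo $M$ on $G(\alpha_0,\alpha_1)_\GL$: for $u \in G(\alpha_0,\alpha_1)_\GL$, the element $[s,u] = sus\cdot u^{-1}$ lies in $G(\alpha_0)_\GL$, and $\bar\phi([s,u]) = \bar\phi(sus)\bar\phi(u)^{-1} = 1$ since $\det(s) = 1$ yields $\det(sus) = \det(u)$; hence $[s,u] \in M$. From this I would obtain $sL(\alpha_1)s \subseteq L(\alpha_1)M$: writing $l_1 \in L(\alpha_1)$ uniquely as $l_1 = u_1\tau_1$ in $\GaL_{n+1}(q) = \GL_{n+1}(q) \rtimes \grp{\sigma}$, one has $u_1 \in G(\alpha_0,\alpha_1)_\GL$ (since the pure field automorphism $\tau_1$ fixes every $\alpha_i$), and since each matrix entry $0, \pm 1$ of $s$ is fixed by any field automorphism, $\tau_1$ commutes with $s$; thus $sl_1 s = (sus)\tau_1 \in u_1 M\tau_1 = l_1 M$, using the field-automorphism stability of $M$. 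Consequently $sL(\alpha_1)sL \subseteq L(\alpha_1)ML = L$ (since $L(\alpha_1), M \le L$), so (\ref{eq:linear_mgml}) forces $L = G(\alpha_0)$; conversely $L = G(\alpha_0)$ trivially satisfies (\ref{eq:linear_mgml}). Therefore (i) is equivalent to $L = G(\alpha_0)$.

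Finally, the isomorphism from the first paragraph translates (ii) and (iii) into the same condition. For (ii), $\det(L_\GL) = \det(G_\GL)$ is equivalent to $\bar\phi(L_\GL/M) = \mathrm{Pdet}(G_\GL)$ and hence to $L_\GL = G(\alpha_0)_\GL$; together with $e_L = e_G$ (and $e_{G(\alpha_0)} = e_G$ since field automorphisms fix every $\alpha_i$), this yields $L = G(\alpha_0)$. For (iii), under $e_L = e_G$ we have $|G(\alpha_0):L| = |G(\alpha_0)_\GL:L_\GL|$, which divides $|G(\alpha_0)_\GL:M| = |\mathrm{Pdet}(G_\GL)|$, so coprimality to $|\mathrm{Pdet}(G_\GL)|$ forces the index to equal $1$, i.e.\ $L = G(\alpha_0)$. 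The main obstacle will be correctly identifying $\ker\bar\phi$ with $M$, which depends critically on including the scalar centre $Z$ (rather than just $\SL_n(q)$) in the Levi complement of $M$; once this is in hand, the structural collapse $sL(\alpha_1)sL \subseteq L$ is the decisive observation.
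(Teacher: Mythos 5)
Your argument breaks at the identification $\ker\bar\phi = M$. The group $M = W \rtimes Z\SL_n(q)$ is \emph{not} the set of $g \in G(\alpha_0)_\GL$ with $\det(g) \in \grp{\mu^{n+1}}$: writing $g$ as acting by the scalar $c \in \Fb^*_q$ on $\alpha_0$ and by $A \in \GL_n(q)$ on $V/\alpha_0$, membership in $M$ requires $\det(A) = c^n$, i.e.\ $\det(g) = c^{n+1}$ for the \emph{specific} scalar $c$ induced on $\alpha_0$ --- a strictly stronger condition than $\det(g) \in \grp{\mu^{n+1}}$. In fact $M$ has index $(q-1)/\gcd(q-1,n+1)$ in $Z\SL_{n+1}(q)(\alpha_0) = \ker\bar\phi$. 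Your ``key technical step'' fails for exactly this reason: take $u = h$, the element sending $v_1 \mapsto \mu v_1$, $v_2 \mapsto \mu\inv v_2$ and fixing the remaining basis vectors. Then $[s,u]$ acts as $\mu$ on $\alpha_0$ and as $\mathrm{diag}(\mu\inv,1,\dots,1)$ on $V/\alpha_0$; it has determinant $1$, yet it lies in $M$ only when $\mu^{n+1}=1$. So $sL(\alpha_1)s \not\subseteq L(\alpha_1)M$ in general, the claimed collapse $sL(\alpha_1)sL \subseteq L$ does not occur, and the conclusion that each of (i)--(iii) forces $L = G(\alpha_0)$ is false --- it is refuted by the paper itself, since Theorem~\ref{thmintro:2bbtrans}(a) and Table~\ref{table:small_q} exhibit proper subgroups $M \le L < G(\alpha_0)$ (e.g.\ $L = W \rtimes \SL_2(5)$ in $\PSL_3(5)$, block size $4$) giving genuine $2$-by-block-transitive actions.

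The correct bookkeeping, as in the paper's proof, uses the homomorphism $\delta(g) = \phi_2(g)\phi_1(g)\phi_0(g)^{-n}$ on $\ol{G}_\GL(\alpha_0,\alpha_1)$, where $\phi_2,\phi_1,\phi_0$ are the determinants on $V/\beta$, $\alpha_1$, $\alpha_0$; its kernel is $M(\alpha_1)$ and its image is all of $\Fb^*_q$, so the relevant quotient has order $q-1$ and conjugation by $s$ is \emph{not} trivial on it. One must pass to the larger group $M^* = sM(\alpha_1)sM(\alpha_1)$, compute that $\delta(M^*) = \grp{\mu^{n+1}}$, and only on the resulting quotient --- which is $\mathrm{Pdet}(G_\GL)$ --- does $s$ act trivially. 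At that point equation~(\ref{eq:linear_2pt}) reduces, via Lemma~\ref{lem:LDC_2pt} and the transitivity of $M$ on lines other than $\alpha_0$, to the conditions $e_L = e_G$ and $\det(L_\GL) = \det(G_\GL)$, rather than to $L = G(\alpha_0)$.
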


\begin{proof}
As in the proof of Proposition~\ref{prop:psl:nonstandard}, $G$ has $2$-by-block-transitive action on $G/L$ with block stabilizer $G(\alpha_0)$ if and only if the equation (\ref{eq:linear_mgml}) is satisfied, and we have $e_{G(\alpha_0,\alpha_1)} = e_G$.

Write $\beta = \grp{v_0,v_1}_q$, $G^*= G(\alpha_0,\beta)$ and $L^* = L(\beta)$, and consider the possible equation
\begin{equation}\label{eq:linear_mgml:mod}
G^* = L^*sL(\alpha_1)s.
\end{equation}
We claim that the equations (\ref{eq:linear_mgml}), (\ref{eq:linear_2pt}) and (\ref{eq:linear_mgml:mod}) are equivalent in the present context.  We know in general that (\ref{eq:linear_mgml}) implies (\ref{eq:linear_2pt}).  Note that $G(\alpha_0,\alpha_1) \le G^*$.  Using the action of $L^* \cap W = W(\beta)$, we see that $L^*$ acts transitively on the $1$-dimensional subspaces of $\beta$ other than $\alpha_0$, so that $G^* = L^*G(\alpha_0,\alpha_1)$.  Thus if $G(\alpha_0,\alpha_1) = L(\alpha_1)sL(\alpha_1)s$, then
\[
L^*sL(\alpha_1)s = L^*L(\alpha_1)sL(\alpha_1)s = L^*G(\alpha_0,\alpha_1) = G^*,
\]
showing that (\ref{eq:linear_2pt}) implies (\ref{eq:linear_mgml:mod}).  In turn, since the action of $L$ on $V/\alpha_0$ contains a copy of the special linear group, we see that the $L$-orbit of $\beta$ consists of all $1$-dimensional subspaces of $V/\alpha_0$; in other words, $LG^* = G(\alpha_0)$.  Thus if (\ref{eq:linear_mgml:mod}) holds then
\[
LsL(\alpha_1)s = LG^* = G(\alpha_0),
\]
showing that (\ref{eq:linear_mgml:mod}) implies (\ref{eq:linear_mgml}).

We now observe that the subgroup $M$ of $L_\GL$ acts transitively on $1$-dimensional subspaces of $V$ other than $\alpha_0$, with the result that $e_{L(\alpha_1)} = e_L$.  In turn, in order to satisfy (\ref{eq:linear_2pt}), we need 
\[
e_{L(\alpha_1)} = e_{G(\alpha_0,\alpha_1)} = e_G.
\]
So from now on we may assume $e_L = e_G$.  Note moreover that $M(\alpha_1)$ and $sM(\alpha_1)s$ are both normal in $G(\alpha_0,\alpha_1)$, so if we write $M^* = sM(\alpha_1)sM(\alpha_1)$, then (\ref{eq:linear_2pt}) reduces to
\begin{equation}\label{eq:linear_2pt:reduced}
\frac{G(\alpha_0,\alpha_1)}{M^*} = \frac{L(\alpha_1)sL(\alpha_1)s}{M^*}.
\end{equation}

We have a homomorphism
\[
\phi: G(\alpha_0,\alpha_1) \rightarrow \GaL(V/\beta) \times \GaL(\alpha_1) \times \GaL(\alpha_0),
\]
given by the action of $G(\alpha_0,\alpha_1)$ on the space $V/\beta \oplus \alpha_1 \oplus \alpha_0$.  The assumption $n\ge 2$ ensures that $V/\beta$ is nontrivial.  We see that $M^*$ contains the kernel of this action.

In order to make stabilizers of the general linear group more legible, we will write $\Lambda = \GL_{n+1}(q)$.  Given $g \in \Lambda(\alpha_0,\alpha_1)$, let $\phi_2(g), \phi_1(g), \phi_0(g)$ be the determinants of $g$ acting on $V/\beta, \alpha_1,\alpha_0$ respectively, and define
\[
\delta: \Lambda(\alpha_0,\alpha_1) \rightarrow \Fb^*_q;\quad g \mapsto \phi_2(g)\phi_1(g)\phi_0(g)^{-n}.
\]
Then $M(\alpha_1)$ is the kernel of $\delta$, so $\Lambda(\alpha_0,\alpha_1)/M(\alpha_1)$ is cyclic of order $q-1$.  Given $g \in \Lambda(\alpha_0,\alpha_1)$, we see that $\delta(g) = \det(g)\phi_0(g)^{-(n+1)}$.  If $g \in M(\alpha_1)$ is such that $\phi_i(g) = \mu^{a_i}$ and $\phi_i(sgs) = \mu^{a'_i}$, then we see that $a_2 = na_0 -a_1$; $a'_2 = a_2$; $a'_1 = a_0$; and $a'_0 = a_1$.  Hence
\[
\delta(sgs) = \mu^{na_0-a_1}\mu^{a_0}\mu^{-na_1} = \mu^{(n+1)(a_0-a_1)}.
\]
Since we can choose $a_0$ and $a_1$ freely, we conclude that $M^* = K\ker\delta$ where $K$ is the group of elements of $\Lambda(\alpha_0,\alpha_1)$ of determinant $1$.  In particular, the quotient map from $\Lambda(\alpha_0,\alpha_1)$ to $\Lambda(\alpha_0,\alpha_1)/M^*$ is equivalent to the map
\[
\Lambda(\alpha_0,\alpha_1) \rightarrow \Fb^*_q/\grp{\mu^{n+1}}; \quad g \mapsto \det(g)\grp{\mu^{n+1}},
\]
which is a restriction of the map $\mathrm{Pdet}$.  Note that $\mathrm{Pdet}(s)$ is trivial.  We now see that the action of $s$ on $G(\alpha_0,\alpha_1)/M^*$ is trivial, so (\ref{eq:linear_2pt:reduced}) becomes
\begin{equation}\label{eq:linear_2pt:reduced:bis}
\frac{G(\alpha_0,\alpha_1)}{M^*} = \frac{L(\alpha_1)M^*}{M^*},
\end{equation}
and then since $e_{G(\alpha_0,\alpha_1)} = e_{L(\alpha_1)}$, (\ref{eq:linear_2pt:reduced:bis}) is equivalent to
\begin{equation}\label{eq:linear_2pt:reduced:ter}
\frac{G_\GL(\alpha_0,\alpha_1)}{M^*} = \frac{L_\GL(\alpha_1)M^*}{M^*}.
\end{equation}
We see that (\ref{eq:linear_2pt:reduced:ter}) is satisfied if and only if
\[
\mathrm{Pdet}(L_\GL(\alpha_1)) = \mathrm{Pdet}(G_\GL(\alpha_0,\alpha_1)).
\]
Since $\mathrm{Pdet}(M)$ is trivial and $M$ acts transitively on lines of $V$ other than $\alpha_0$, we see that $\mathrm{Pdet}(L_\GL(\alpha_1)) = \mathrm{Pdet}(L_\GL)$.  Similarly, $\mathrm{Pdet}(G_\GL(\alpha_0,\alpha_1)) = \mathrm{Pdet}(G_\GL)$.  Note also that for any $Z \le H \le G$, then $\det(H_\GL)$ contains $\grp{\mu^{n+1}}$, so $\mathrm{Pdet}(H_\GL) = \mathrm{Pdet}(G_\GL)$ if and only if $\det(H_\GL) = \det(G_\GL)$.  Thus (\ref{eq:linear_2pt:reduced:ter}) is satisfied if and only if $\det(L_\GL) = \det(G_\GL)$.  This completes the proof that (i) and (ii) are equivalent.

Finally, we claim that (ii) and (iii) are equivalent; we may suppose $e_G = e_L$, so that $|G_\GL(\alpha_0):L_\GL| = |G(\alpha_0):L|$.  We have seen that $\det(L_\GL) = \det(G_\GL)$ if and only if $\mathrm{Pdet}(L_\GL) = \mathrm{Pdet}(G_\GL)$, and moreover $\mathrm{Pdet}(G_\GL) = \mathrm{Pdet}(G_\GL(\alpha_0))$.  Thus (ii) is satisfied if and only if $|G(\alpha_0):L|$ is coprime to $|\mathrm{Pdet}(G_\GL)|$, showing that (ii) and (iii) are equivalent.
\end{proof}

We note that if $e_L = e_G$, then to satisfy condition (iii) of Proposition~\ref{prop:psl:standard} it is sufficient but not necessary to have $|G(\alpha_0):L|$ coprime to $\mathrm{gcd}(n+1,q-1)$.  For example, the group $\PSL_3(19)$ has a PD action with block size $3$, with point stabilizer $W \rtimes (\SL_2(19) \rtimes C_2)$, that does not extend to an action of $\PGL_3(19)$; since $|\mathrm{Pdet}(\GL_3(19))| = 3$, there is no PD action of $\PGL_3(19)$ with block size $3$.

\subsection{Some calculations on abelian-by-cyclic groups}

To complete the classification of $2$-by-block-transitive actions, we need to consider certain subgroups of finite abelian-by-cyclic groups.  Specifically, we will be considering the following situation, motivated by Lemma~\ref{lem:LDC_2pt}: $G$ is a finite abelian-by-cyclic group admitting an automorphism $s$ of order $2$, and $H$ is a proper subgroup of $G$ such that $G = Hs(H)$.

We first establish some restrictions on quotients of $G$.

\begin{lem}\label{lem:abelian_intersection}
Let $G$ be a finite group, let $\alpha$ be an automorphism of $G$ and let $H \le G$ be such that $H\alpha(H) = G$.
\begin{enumerate}[(i)]
\item Let $M$ be a normal $\alpha$-invariant subgroup of $G$ such that $G/M$ is cyclic.  Then $G = MH$.
\item Suppose that $H \cap D$ is $\alpha$-invariant, where $D$ is the derived group of $G$.  Then $N:=H \cap \alpha(H)$ is normal in $G$ and $G/N$ is a group of order $|H:N|^2$.
\end{enumerate}
\end{lem}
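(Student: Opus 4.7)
For part (i), the plan is to project into the cyclic quotient $G/M$. Because $M$ is $\alpha$-invariant, $\alpha$ descends to an automorphism $\bar\alpha$ of $G/M$. The key point is that every subgroup of a finite cyclic group is characteristic (there is a unique subgroup of each order dividing $|G/M|$), so $\bar\alpha$ fixes the image $\bar H := HM/M$ setwise. Translating back, $\alpha(H)M = HM$. Combining with $\alpha(H)H = G$ then gives $G = \alpha(H)HM = HM$, which rearranges to $G = MH$.

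For part (ii), the order statement $|G/N| = n^2$ is immediate from the product formula $|G| = |\alpha(H)H| = |H|^2/|N|$, so the whole content of the lemma is the normality of $N$. I plan a two-stage argument. First I will show that $K := H \cap D$ is normal in $G$: it is clearly normal in $H$ (as $D \lhd G$), and the $\alpha$-invariance hypothesis together with $\alpha(D)=D$ gives $K = \alpha(H) \cap D$, which is therefore normal in $\alpha(H)$ as well. Since $\alpha(H)H = G$, the set $H \cup \alpha(H)$ generates $G$, so $K \lhd G$.

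Then I reduce modulo $K$. In $\tilde G := G/K$ the hypothesis $\tilde\alpha(\tilde H)\tilde H = \tilde G$ still holds, the derived subgroup is $\tilde D = D/K$, and now $\tilde H \cap \tilde D = (H\cap D)/K = 1$. This forces the natural map $\tilde H \to \tilde G/\tilde D$ to be injective, so $\tilde H$ embeds in the abelianization $\tilde G/\tilde D$ and is therefore abelian; the same argument, applied to $\tilde\alpha(\tilde H)$ (using $\tilde\alpha(\tilde D)=\tilde D$), shows $\tilde\alpha(\tilde H)$ is abelian as well. Consequently $\tilde N := N/K = \tilde H \cap \tilde\alpha(\tilde H)$ is a subgroup of each of these two abelian groups and is thus normalized by each; since $\tilde G = \tilde\alpha(\tilde H)\tilde H$, it is normalized by all of $\tilde G$. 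Observing that $K \subseteq N$ (because $K \subseteq H \cap \alpha(H)$), the preimage of $\tilde N$ in $G$ is exactly $N$, and normality lifts from $\tilde G$ to $G$.

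The creative step is the initial reduction: recognizing that the $\alpha$-invariance of $H\cap D$ is already enough to force $H\cap D$ itself to be normal in $G$. Once this reduction is made, the abelianness of $\tilde H$ and $\tilde\alpha(\tilde H)$ falls out automatically, and normality of $N$ is then essentially free. I do not expect any serious obstacle beyond bookkeeping the quotient correctly.
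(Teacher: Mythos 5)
Your proof is correct and follows essentially the same route as the paper's: part (i) rests on the uniqueness of subgroups of each order in a cyclic quotient, and part (ii) on the identity $H\cap D=\alpha(H)\cap D$ together with the abelianness of $H$ and $\alpha(H)$ modulo this intersection and the factorization $G=\alpha(H)H$. The only cosmetic differences are that you pass to the quotient $G/(H\cap D)$ where the paper argues directly that $N\supseteq H\cap D$ is normal in each factor, and that you obtain $|G/N|=n^2$ from the product formula $|\alpha(H)H|=|H|^2/|N|$ where the paper uses a unique-factorization argument; both variants are valid.
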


\begin{proof}
(i)
In the quotient $G/M$, we see that the images of $H$ and $\alpha(H)$ have the same order, so $MH = M\alpha(H)$.  It is then clear that $G = MH\alpha(H) = MH$.

(ii)
Let $R = H \cap D$.  Since $D$ is characteristic, we have
\[
H \cap D = \alpha(H \cap D) = \alpha(H) \cap D,
\]
so in fact $R = N \cap D$.  We see that $H/R$ is abelian, so $N$ is normal in $H$.  Similarly, $N$ is normal in $\alpha(H)$.  Since $G = H\alpha(H)$ it follows that $N$ is normal in $G$.  The quotient $G/N$ is then the product of two subgroups $H/N$ and $\alpha(H)/N$ with trivial intersection; thus $G/N$ has order $|H:N||\alpha(H):N| = |H:N|^2$.
\end{proof}

After dividing out by $H \cap sHs$, we will typically be interested in the situation where $H$ is cyclic.  Finite groups $G = HK$ such that $H$ and $K$ are cyclic subgroups with trivial intersection were studied by Douglas in a series of articles \cite{Douglas}.  We are in effect considering a special case, where in addition some automorphism of $G$ of order $2$ swaps $H$ and $K$.  Our focus here is a little different than in \cite{Douglas} however, as we will be classifying factorizations $\ol{G} = Hs(H)$ of quotients $\ol{G}$ of a given group $G$ of a more special form, rather than constructing all finite groups that admit such factorizations.

If $G = A \rtimes H$ for some abelian $s$-invariant normal subgroup $A$, then $A$ is also very close to being cyclic.

\begin{lem}\label{lem:metabelian_2pt}
Let $G$ be a finite group with an abelian normal subgroup $A$, such that $G/A$ is cyclic and such that $G$ admits an automorphism $s$ of order $2$ that normalizes $A$.  Suppose that there is $H \le G$ such that $H \cap A = \triv$ and $G = Hs(H)$.  Then $A$ has a cyclic subgroup of index at most $2$.
\end{lem}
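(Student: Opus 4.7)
My plan is to apply Lemma~\ref{lem:abelian_intersection} to reduce the problem to a quotient $\bar G := G/N$ of order $|A|^2$, then analyze a bijection $\phi \colon \bar H \to \bar A$ arising from the two cyclic complements $\bar H = H/N$ and $s(\bar H) = sHs/N$ of $\bar A$ in $\bar G$.

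First, since $A$ is normal, $s$-invariant, and $G/A$ is cyclic, Lemma~\ref{lem:abelian_intersection}(i) with $M = A$ gives $G = AH$; hence $H$ is a complement to $A$ with $H \cong G/A$ cyclic of order $c := |G/A|$. Since $G/A$ is abelian we have $[G,G] \le A$, so $H \cap [G,G] \le H \cap A = \triv$ is trivially $s$-invariant; Lemma~\ref{lem:abelian_intersection}(ii) then gives $N := H \cap sHs \trianglelefteq G$ with $|G/N| = |H\colon\! N|^2$. The equation $|G| = |sHsH| = |H|^2/|N|$ forces $|N| = c/|A|$, hence $|H\colon\! N| = |A| =: a$. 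Setting $\bar G := G/N$ (of order $a^2$), the image $\bar A := AN/N$ is isomorphic to $A$ (since $A \cap N \le A \cap H = \triv$) and is normal abelian of order $a$; meanwhile $\bar H$ and $s(\bar H)$ are cyclic of order $a$ with trivial intersection, so $\bar G = \bar A \rtimes \bar H$. Fixing a generator $\bar h$ of $\bar H$, let $\alpha \in \Aut(\bar A)$ be the conjugation action of $\bar h$ (of some order $d \mid a$), and write $s(\bar h) = x \bar h^j$ with $x \in \bar A$ and $j$ coprime to $a$. The relation $s^2 = 1$ forces $j^2 \equiv 1 \pmod a$ and $\sigma\alpha\sigma = \alpha^j$ (where $\sigma := s|_{\bar A}$), and $\bar H \cap s(\bar H) = \triv$ is equivalent to the $1$-cocycle $\phi(\bar h^l) := N_l(y)$, with $y := \phi(\bar h)$ and $N_l(y) := \sum_{i=0}^{l-1}\alpha^i(y)$, being a \emph{bijection} $\bar H \to \bar A$.

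From $(\alpha-1)N_l(y) = (\alpha^l-1)(y)$ one sees that $B := (\alpha-1)\bar A = \{(\alpha^k-1)(y) : k \in \mathbb{Z}/d\}$ has at most $d$ elements; and from $N_{l+1}(y) \equiv N_l(y) + y \pmod B$, the induced map $\mathbb{Z}/a \to \bar A/B$, $l \mapsto ly + B$, is a surjective homomorphism, making $\bar A/B$ cyclic of order $a/|B|$. Since $\bar A$ is abelian, this cyclic quotient lifts to a cyclic subgroup of $\bar A$ of the same order, i.e.\ of index $|B|$. It thus suffices to establish $|B| \le 2$, which I expect to be the main obstacle. The strategy is to argue prime by prime on the primary decomposition of $\bar A$, combining $\sigma\alpha\sigma = \alpha^j$ and the constraint $j^2 \equiv 1 \pmod a$ with the bijectivity of $\phi$: a key computational input is that in characteristic $p$ one has $N_p(T) = (T-1)^{p-1}$, so on any elementary abelian $p$-subquotient of $\bar A$ of rank $\ge 2$ the value $N_p(\alpha)(y)$ cannot have order as large as bijectivity demands. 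This forces each Sylow $p$-subgroup of $\bar A$ for odd $p$ to be cyclic, and the $2$-Sylow to be cyclic or of the form $C_{2^e} \times C_2$ — precisely the structure required for $A$ to have a cyclic subgroup of index at most $2$.
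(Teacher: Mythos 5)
Your opening reduction is correct and matches the paper's: both proofs invoke Lemma~\ref{lem:abelian_intersection} to get $G = A \rtimes H$ with $H$ cyclic, pass to $G/N$ with $N = H \cap sHs$ so that $|G/N| = |A|^2$, and translate $G = sHsH$ into the statement that the partial norm sums $l \mapsto \sum_{i=0}^{l-1}\alpha^i(y)$ give a bijection $\Zb/a\Zb \to \bar{A}$ (this is exactly the paper's equation $(ah)^kh^{-k} = \prod_{i=0}^{k-1}h^iah^{-i}$; note only that with $s(\bar h) = x\bar h^j$ the relevant automorphism is $\alpha^j$ rather than $\alpha$, a harmless relabelling since $j$ is a unit). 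Your observation that $\bar A/B$ is cyclic for $B = (\alpha-1)\bar A$, with $|B|$ bounded by the order of $\alpha$, is correct and is a nice module-theoretic repackaging not present in the paper.

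The gap is in the endgame, and it is twofold. First, the reduction ``it suffices to establish $|B| \le 2$'' is a dead end because $|B| \le 2$ is simply false under the hypotheses. Take $G = \grp{x} \rtimes \grp{h}$ with $x,h$ of order $9$ and $hxh\inv = x^4$, and let $s$ be the order-$2$ automorphism $x \mapsto x\inv$, $h \mapsto xh$. Then $H = \grp{h}$ satisfies $H \cap A = \triv$ and $G = sHsH$ (the partial sums $\sum_{i=0}^{l-1}4^i$ run over all residues mod $9$, by Hull--Dobell), the conclusion holds trivially since $A = C_9$ is cyclic, yet $B = (\alpha-1)A = \grp{x^3}$ has order $3$. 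So no argument can prove $|B|\le 2$, and cyclicity of $\bar A/B$ alone does not rule out, say, $\bar A = C_p \times C_p$ with $\alpha$ a transvection. Second, the pivot in your final paragraph to controlling the Sylow structure of $\bar A$ directly is the right target, but it is only a sketch, and it is precisely where all the work in the paper's proof lives: a minimal counterexample argument reducing to $p$-groups, the observation that the full norm $b = \prod_{i=0}^{q-1}h^iah^{-i}$ (with $q = p^{e-1}$) lies in $\Z(G)\cap A$ and must be nontrivial for the partial sums to be surjective, a Frattini-quotient computation that forces $p=2$ and rank $2$ (the identity $N_p = (\alpha-1)^{p-1}$ you cite does \emph{not} immediately kill rank $2$ when $p=2$, since $z^{q(q-1)/2}$ can be nontrivial there), and finally a separate hand computation eliminating $C_4 \times C_4$. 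None of this is carried out, and you flag it yourself as the expected main obstacle; as it stands the proposal is a correct setup plus an unworkable sufficient condition, not a proof.
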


\begin{proof}
We consider $G$ as embedded in a semidirect product $G \rtimes \grp{s}$ in the obvious way.  By Lemma~\ref{lem:abelian_intersection}(i) we can write $G = A \rtimes H$; note that $H$ is cyclic, say $H = \grp{h}$.

We now suppose that $(G,A,H,s)$ is a counterexample with $|G|$ minimal.  By Lemma~\ref{lem:abelian_intersection}(ii), the intersection $N = H \cap sHs$ is normal in $G$; clearly also $N$ is $s$-invariant, so we can pass from $G \rtimes \grp{s}$ to the quotient $G/N \rtimes \grp{s}$.  By the minimality of $G$, we deduce that $N = \triv$.  Then $G$ has order $n^2$ where $n = |H| = |A|$.

We observe next that $A$ is a $p$-group for some prime $p$.  Otherwise, we could write $A = A_1 \times A_2$ where $A_1$ and $A_2$ are nontrivial and have coprime order.  We would then get a smaller counterexample as either $A_1 \rtimes H$ or $A_2 \rtimes H$.  So by minimality, $A$ must be a $p$-group, and hence $G$ is a $p$-group.  Thus $n = p^e$ for some $e \ge 1$.

Write $shs = ah$ for some $a \in A$.  Since $G$ is finite and $G = HsHs = sHsH$, for all $a' \in A$ there is some $k > 0$ such that $(ah)^k \in a'H$, so $a' = (ah)^k h^{-k}$.  We can rearrange $(ah)^k h^{-k}$ as
\begin{equation}\label{eq:abelian_product}
(ah)^k h^{-k} = \prod^{k-1}_{i=0} h^iah^{-i}.
\end{equation}
Thus as $k$ ranges over the natural numbers, every element of $A$ must be expressible as a product of conjugates of $A$ as in the right hand side of (\ref{eq:abelian_product}).

Since $A$ has less than $n$ nontrivial elements, the $H$-conjugacy classes of $A$ all have size at most $q:= n/p$.  We note also that for all $x \in A$, if $m$ and $qm/2$ are integers then $x^{qm/2}=1$.  If $p$ is odd this follows from the fact that $A$ is not cyclic; if $p=2$ and $x^{qm/2} \neq 1$, then $x$ would have order exactly $q$.  But in the latter case, $A = \grp{x} \times C_2$, which does not yield a counterexample to the lemma.

Let $Z = \Z(G) \cap A$.  Since $A$ is not cyclic, we see from (\ref{eq:abelian_product}) that $a \not\in Z$, so $Z$ is a proper subgroup of $A$.  Since $h^{q} \in \Z(G)$, we can rewrite the product in (\ref{eq:abelian_product}) as follows: writing $k = dq+r$ for $d \ge 0$ and $0 \le r < q$, then
\begin{equation}\label{eq:abelian_product:bis}
(ah)^k h^{-k} = b^d\prod^{r-1}_{i=0} h^iah^{-i},
\end{equation}
where $b = (ah)^qh^{-q} = \prod^{q-1}_{i=0}h^iah^{-i}$; notice that $b \in Z$.  If $b$ is trivial, then the right hand side of (\ref{eq:abelian_product}) can only take at most $q$ values, which is a contradiction.  So $b$ must be some nontrivial element of $Z$.

\emph{Claim 1: $A$ has exponent dividing $4$.  If $h^{q/r}ah^{-q/r} \in \grp{a}Z$ for some divisor $r$ of $q$, then $a^r \not\in Z$.}

We see that $G/A_0$ is a counterexample to the lemma, where $A_0$ is the group of fourth powers of $A$ if $p=2$ and the group of $p$-th powers of $A$ otherwise.  Thus $A_0=\triv$ by minimality of $G$.

Let $r$ be a divisor of $q$ and let $q' = q/r$.  Then we can write $b$ as a product of $q'$ conjugates of $b'$, where
\[
b' = \prod^{r-1}_{i=0} h^{q'i}ah^{-q'i}.
\]
Suppose $h^{q'}ah^{-q'} = az$ for some $z \in Z$.  Then $b' = a^rz^{r(r-1)/2}$.  If we also have $a^r \in Z$ then we would have $b = z^{q(r-1)/2} = 1$, a contradiction.  So if $h^{q'}ah^{-q'} = az$, then $a^r \not\in Z$.

By the minimality of $G$, the quotient $G/Z$ is not a counterexample to the lemma, so either $p=2$ or $A/Z$ is cyclic of odd order $p$.  However, since $h$ has $p$-power order, the latter would imply $hah\inv \in aZ$, which has been ruled out.  Thus $p=2$ and $A$ has exponent dividing $4$.

Now suppose $aZ \neq a\inv Z$ and that $h^{q'}ah^{-q'} = a\inv z$.  Then $r$ is even and $b' = z^{r(r-1)/2}$, so $b = z^{q(r-1)/2}=1$, a contradiction.  Since the order of $a$ divides $4$, we conclude that if $h^{q'}ah^{-q'} \in \grp{a}Z$ then $a^r \not\in Z$, proving the claim.

\

\emph{Claim 2: $A/Z$ has exponent $4$.}

Given Claim 1, we may suppose for a contradiction that $A/Z$ has exponent $2$.  Then $A/Z$ is not cyclic by Claim 1, but also $A/Z \rtimes \grp{h}$ satisfies the lemma by the minimality of $G$, so $A/Z = C_2 \times C_2$ and hence $q \ge 4$.  We then have $hah\inv = ac$ for some $c \in A \setminus Z$ and $hch\inv = cz$ for some $z \in Z$, so
\[
h^2ah^{-2} = hach\inv = ac^2z = az.
\]
By Claim 1, we see that $a^{q/2} \not\in Z$, in particular $a^2 \not\in Z$, which contradicts the structure of $A/Z$.  This contradiction completes the proof of the claim.

\

For the final contradiction, observe that by Claim 2 we have $\Phi(A) \neq \triv$; since $G/\Phi(A)$ is not a counterexample to the lemma, it follows that $A$ is generated by at most $2$ elements.  Given Claim 1, in fact $A = C_4 \times C_4$ and $q=8$.  By Claims 1 and 2, $A/Z$ is not cyclic and has exponent $4$, ensuring that $|Z|=2$.  Write $a' = hah\inv$.  We see from (\ref{eq:abelian_product:bis}) that $a' \not\in a\Phi(A)$, so we can write $A = \grp{a} \times \grp{a'}$.  We then see that $b = a^2(a')^2$, and then examining the structure of automorphisms of $A$ of $2$-power order, we see that $h^2ah^{-2}$ takes the form $a^\epsilon b$ for $\epsilon \in \{1,3\}$.  By Claim 1, we have $a^4 \not\in Z$, which is impossible.  This contradiction completes the proof.
\end{proof}

\begin{ex}Let
\[
G = \grp{a_1,a_2,h \mid a^2_1, \; a^2_2, \; h^4, \; a_1a_2a\inv_1a\inv_2, \; ha_1h\inv a\inv_2, \; ha_2h\inv a\inv_1},
\]
let $A = \grp{a_1,a_2}$ and let $H = \grp{h}$.  Then $G/A$ is cyclic and $G$ admits an automorphism $s$ of order $2$ such that 
\[
s(a_1)=a_1, \; s(a_2) = a_2, \; s(h) = a_1h.
\]
Using (\ref{eq:abelian_product}) from the proof of Lemma~\ref{lem:metabelian_2pt}, one sees that $G = s(H)H = Hs(H)$.  However, $A$ is not cyclic.  Thus the conclusion of Lemma~\ref{lem:metabelian_2pt}, that $A$ has a cyclic subgroup of index at most $2$, is sharp.
\end{ex}

We now specialize to a case that is relevant for applications to rank $1$ groups of Lie type, where we can count the number of conjugacy classes of subgroups $H < G$ of a given index such that $G = sHsH$.  To give a succinct expression for the relevant numbers, we define a modified totient function
\[
\varphi_k(t) := t \prod_{p \text{ prime}, \; p \mid t, \; p \not\mid k}\frac{p-1}{p};
\]
this reduces to Euler's totient function when $k=1$.

\begin{lem}\label{lem:metacyclic_2pt:conjugacy}
Let $G = \grp{x,y}$ be a finite metacyclic group with cyclic normal subgroup $\grp{x}$, and suppose $G$ embeds in a semidirect product $G \rtimes \grp{s}$.  Suppose that $s$ has order $2$, normalizes $\grp{x}$ and centralizes $G/\grp{x}$.  Write
\[
yxy\inv = x^a; \; sxs = x^{k+1}; \; sys = x^ly
\]
for $a,k,l \in \Zb$ with $a > 0$.  Let $k_0 = \mathrm{gcd}(k,l)$; let $d_0$ be the largest natural number coprime to $k_0$ that divides $|G:\grp{x}|$ and $|\grp{x}:\grp{y^{|G:\grp{x}|}}|$.   Write
\[
d_0 = 2^{e_0}p^{e_1}_1 \dots p^{e_r}_r,
\]
where $p_1,\dots,p_r$ are distinct odd primes, $e_0 \ge 0$ and $e_1,\dots,e_r > 0$.  Now set
\[
d = 2^{e'_0}p^{e'_1}_1 \dots p^{e'_r}_r
\]
where 
\[
e'_0 = 
\begin{cases}
e_0 &\mbox{if} \;  a \equiv 1 \mod 4\\
1 &\mbox{if} \;  e_0>0 \text{ and } a \equiv 3 \mod 4\\
0 &\mbox{otherwise}
\end{cases}; \quad
\forall 1 \le i \le r: e'_i = 
\begin{cases}
e_i &\mbox{if} \;  a \equiv 1 \mod p_i\\
0 &\mbox{otherwise}
\end{cases}.
\]
Given a natural number $n$, write $\mc{H}_n := \{H < G \mid |G:H|=n, G = HsHs\}$.  Then $\mc{H}_n$ is a union of $G$-conjugacy classes, which is nonempty if and only if $n$ divides $d$.  Writing $[\mc{H}_n]$ for the set of $G$-conjugacy classes in $\mc{H}_n$, if $\mc{H}_n$ is nonempty we have
\[
|\mc{H}_n| = \varphi_k(n); \quad |[\mc{H}_n]| =  \varphi_k(\mathrm{gcd}(a-1,n)).
\]
\end{lem}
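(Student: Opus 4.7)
First, I would apply Lemma~\ref{lem:abelian_intersection}(i) with $M = \grp{x}$ (which is $s$-invariant with cyclic quotient $G/\grp{x} \cong C_m$, $m := |G:\grp{x}|$) to deduce that any $H \in \mc{H}_n$ satisfies $G = \grp{x} H$, hence $H \cap \grp{x} = \grp{x^n}$ and in particular $n \mid |x|$. Writing $y^m = x^j$, one then parametrises each such $H$ by an element $c \in \Zb/n\Zb$ via $H = \grp{x^n, x^c y}$, subject to the index constraint $c S_m + j \equiv 0 \pmod{n}$, where $S_r := 1 + a + a^2 + \cdots + a^{r-1}$. Direct computation using $sxs = x^{k+1}$ and $sys = x^l y$ yields $sHs = \grp{x^n, x^{c(k+1)+l} y}$, and the equality $G = sHsH$ (equivalent to $|H \cap sHs| = |H|/n$) translates, after passing to $G/\grp{x^n}$, into the requirement that the subgroup $B_c := \{r \in \Zb/m\Zb : n \mid (ck+l) S_r\}$ has index exactly $n$ in $\Zb/m\Zb$.

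I would then analyse this condition prime-by-prime on $n$ via the Chinese Remainder Theorem, reducing to $n = p^\epsilon$ a prime power. The local contribution decomposes as follows: if $p \mid k$ then $ck + l \equiv l \pmod{p}$, so the $B_c$-condition forces $p \nmid l$ (explaining the coprimality to $k_0 = \gcd(k,l)$ in the definition of $d_0$), and otherwise all $p^\epsilon$ residues of $c$ contribute; if $p \nmid k$ the $B_c$-condition forces $p \nmid ck + l$, contributing $(p-1)p^{\epsilon-1}$ valid residues. Multiplying these local counts via CRT gives $|\mc{H}_n| = \varphi_k(n)$. The index constraint, together with the requirement that $B_c$ have index \emph{exactly} $n$ in $\Zb/m\Zb$, yields the divisibility $n \mid d$: here $n \mid d_0$ comes from the index constraint and the above coprimality, while the refinement from $d_0$ to $d$ arises from controlling the multiplicative order of $a$ modulo prime-power factors of $n$, which governs the periodicity of $S_r \pmod n$.

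For the conjugacy-class count, conjugation by $x^s$ sends $c \mapsto c + s(1-a)$ and conjugation by $y^s$ sends $c \mapsto c a^s$; since $a \equiv 1 \pmod{\gcd(a-1,n)}$, each $G$-orbit on $\Zb/n\Zb$ is a single coset of $\gcd(a-1,n)\Zb/n\Zb$, of size $n/\gcd(a-1,n)$. The property $G = sHsH$ is $G$-conjugation invariant (via $x \mapsto g^{-1} x (sgs)$ one obtains a bijection between $(H,sHs)$-double cosets and $(gHg^{-1},s(gHg^{-1})s)$-double cosets in $G$), so $\mc{H}_n$ is a union of such orbits and $|[\mc{H}_n]| = \varphi_k(n)\gcd(a-1,n)/n$. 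A short arithmetic check converts this to $\varphi_k(\gcd(a-1,n))$, using (as forced by $n \mid d$) that every prime dividing $n$ but not $k$ also divides $a-1$, so that the local factors $(p-1)/p$ in $\varphi_k(n)/n$ and $\varphi_k(\gcd(a-1,n))/\gcd(a-1,n)$ match up.

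The main obstacle is the local analysis at $p = 2$, where the definition of $e_0'$ splits according to $a \pmod 4$. When $a \equiv 3 \pmod 4$, the multiplicative order of $a$ modulo higher powers of $2$ collapses compared to the case $a \equiv 1 \pmod 4$, forcing $v_2(n) \le 1$ for $B_c$ to attain the required index. Extracting the precise form of $e_0'$ needs a careful lifting-the-exponent-style computation of $v_2(S_r)$, and integrating this with the cleaner odd-prime analysis to derive the full formula for $d$ is where the bulk of the technical work lies.
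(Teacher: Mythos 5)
Your proposal follows essentially the same route as the paper: reduce to $H = \grp{x^n, x^c y}$ via Lemma~\ref{lem:abelian_intersection}, translate $G = sHsH$ into the condition that $ck+l$ be a unit modulo $n$ and that the partial sums $S_r = 1 + a + \cdots + a^{r-1}$ run over all residues modulo $n$, count the admissible $c$ to obtain $\varphi_k(n)$, and quotient by the conjugation action $c \mapsto c + (1-a)$ to get $\varphi_k(\mathrm{gcd}(a-1,n))$. The one piece you flag as the main technical obstacle --- determining exactly when the sequence $S_r \bmod n$ has full period, which is where the congruence conditions $a \equiv 1 \bmod 4$ and $a \equiv 1 \bmod p_i$ defining $d$ come from --- is resolved in the paper simply by quoting the Hull--Dobell theorem on full-period linear congruential generators, so your planned lifting-the-exponent computation at $p=2$ can be replaced by a citation.
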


\begin{proof}
Fix a natural number $n$.  Given $m \in \Zb$, write $z_m = x^my$ and $b_m = km+l$.  Let $\alpha(0) = 0$ and thereafter $\alpha(t+1) = a\alpha(t)+1$, so $\alpha(t) = \sum^{t}_{i=1}a^{i-1}$.

We see that $z_mxz\inv_m = x^a$ and that
\[
\quad sz_ms = x^{(k+1)m+l}y = x^{b_m}z_m.
\]
We now observe that $G = \grp{x}\grp{z_m}$ and
\begin{equation}\label{eq:metacyclic}
\forall t > 0: (sz_ms)^t = x^{\alpha(t)b_m}z^t_m.
\end{equation}

Note that $\alpha(t) \mod n$ is eventually periodic, with some period $n_\alpha \le n$; indeed, the period is the least $n'$ such that $\alpha(t) \equiv \alpha(t+n') \mod n$ for some $t \ge 0$.  In particular, we see that $n_\alpha = n$ if and only if the following holds: the values $\alpha(0),\dots,\alpha(n-1)$ form a complete set of congruence classes modulo $n$, and then $\alpha(n) \equiv \alpha(0) \equiv 0 \mod n$.  Define $U_n$ to be the set of integers $u$ such that $u \equiv 1 \mod q$ for all $q$ dividing $n$ such that $q$ is a prime or $q=4$.  By the Hull--Dobell Theorem (see \cite[\S 3.2.1.2 Theorem A]{Knuth}), we have $n_\alpha=n$ if and only if $a \in U_n$.

Write $N = \grp{x^n,y^n}$; clearly $N$ is normalized by $y$.  If $n_\alpha = n$, then $\alpha(n)$ is a multiple of $n$.  We then have
\[
xy^nx\inv = (x^{1-a}y)^n = x^{(1-a)\alpha(n)}y^n,
\]
so $x$ normalizes $N$ and hence $N \unlhd G$.

We note the following conditions on $m$, $n$ and a subgroup $H$ of $G$ for future reference.
\begin{enumerate}[(a)]
\item $b_m$ is a unit modulo $n$;
\item $n_\alpha=n$;
\item $|G:H|=n$;
\item $H = \grp{x^n}\grp{z_m}$.
\end{enumerate}

Given $H \in \mc{H}_n$, then $H$ must satisfy (c).  Since $s$ centralizes $G/\grp{x}$, in order to have $G = HsHs$ we must have $G = \grp{x}H$, and hence $H$ takes the form $(H \cap \grp{x})\grp{z_m}$ for some $m \in \Zb$; moreover, $|G:H| = |\grp{x}:\grp{x} \cap H|$, so $H \cap \grp{x} = \grp{x^n}$.  Thus $H$ also satisfies (d) for some $m \in \Zb$.  On the other hand, given a subgroup $H$ of $G$ satisfying (c) and (d), then using (\ref{eq:metacyclic}), we see that 
\begin{equation}\label{eq:metacyclic:bis}
sHsH = \bigcup_{t > 0}\grp{x^n}x^{\alpha(t)b_m}\grp{z_m} =  \bigcup_{t > 0}x^{\alpha(t)b_m}H;
\end{equation}
thus $G = sHsH$, or equivalently $G = HsHs$, if and only if the parameters $m$ and $n$ satisfy (a) and (b).

Suppose $H \le G$ is such that conditions (a)--(d) are satisfied.  Then $N \unlhd G$ and
\[
z^n_m = (x^{m}y)^n = x^{\alpha(n)m}y^n \in \grp{x^n}y^n,
\]
from which it follows that $N = \grp{x^n,z^n_m}$; in particular, $N \le H$.  Similarly $N \le sHs$, so $N \le H \cap sHs$.  The quotient $G/N$ then satisfies $G/N = \grp{\ol{x}}\grp{\ol{y}}$ where $\ol{x} = xN$ and $\ol{y} = yN$ both have order dividing $n$.   Since $|G:H \cap sHs| = n^2$, in fact $N = H \cap sHs$ and $|G:N|=n^2$, so $G/N$ splits as $\grp{\ol{x}} \rtimes \grp{\ol{y}}$ with $|\grp{\ol{x}}| = |\grp{\ol{y}}| = n$.  In order for this to occur, we see that $n$ must divide $|G:\grp{x}|$ and $|\grp{x}:\grp{y^{|G:\grp{x}|}}|$.

The conclusions we have so far put the following restriction on $n$ in the case that $\mc{H}_n$ is nonempty.  In order for some $m \in \Zb$ to satisfy (a), we need $n$ to be coprime to $k_0$.  The semidirect decomposition of $G/N$ from the last paragraph then ensures that $n$ divides $d_0$.  To additionally satisfy (b), we reduce to the case that $n$ divides $d$.

\

For the rest of the proof we suppose that $n$ is a divisor of $d$; in particular, $n = n_\alpha$, so $N \unlhd G$.  Given that every $H \in \mc{H}_n$ satisfies condition (d), we see that $\mc{H}_n \subseteq \{H_m \mid m \in \Zb\}$ where $H_m := N\grp{z_m}$, so it is enough to consider subgroups $H_m$ of this form.  Given $m \in \Zb$, for $0 < k < n$ we see that $z^k_m$ is not in the normal subgroup $N\grp{x}$ of $G$; however, $z^n_m = x^{\alpha(n)m}y^n \in N$, so $|H_m:N|=n$ and hence $|G:H_m|=n$.  We also see that $H_m \cap \grp{x} = \grp{x^n}$.  If $m \equiv m' \mod n$ then clearly $H_m = H_{m'}$; conversely, if $H_m = H_{m'}$ then 
\[
z_{m'}z^{-1}_m = x^{m'-m} \in (H_m \cap \grp{x}) = \grp{x^n},
\]
so $m \equiv m' \mod n$.  We have $G = H_msH_ms$ if and only if $b_m=km+l$ is coprime to $n$.  Our hypotheses ensure that $k$ and $l$ are coprime modulo $n$, so there exists $m_0$ such that $u_0:=km_0+l$ is a unit modulo $n$ (for instance by Dirichlet's theorem on primes in arithmetic progressions).  The set of possible values of $km+l$ modulo $n$ is then provided by the set $\{u_0+kt \mid t \in \Zb\}$.  Write $n_0$ for the largest factor of $n$ coprime to $k$; the proportion of values of $t$ (modulo $n$) for which $u_0+kt$ is a unit modulo $n$ is
\[
\prod_{p \mid n_0}\frac{p-1}{p} = \frac{\varphi_k(n)}{n}.
\]
Thus $|\mc{H}_n| = \varphi_k(n)$.

Now suppose $H \in \mc{H}_n$ and consider the conjugates of $H$ in $G$.  Since $G = \grp{x}H$, it is enough to consider conjugation by $\grp{x}$.  Here we find that 
\[
xH_mx\inv =  N\grp{x^{m+1}yx\inv} = N\grp{x^{m+(1-a)}y} = H_{m+(1-a)},
\]
and by our assumption on $n$, the number $(1-a)$ is divisible by every prime that divides $n$.  In particular, $km+l$ is a unit modulo $n$ if and only if $k(m+(1-a))+l$ is a unit modulo $n$, so $\mc{H}_n$ is invariant under conjugation by $\grp{x}$; hence $\mc{H}_n$ is a union of conjugacy classes of $G$.  Write $a' = \mathrm{gcd}(a-1,n)$; then $n$ and $a'$ have the same prime divisors, so $\varphi_k(a')/a' = \varphi_k(n)/n$.  The orbits of the action of $\grp{x}$ on $\mc{H}_n$ have size $n/a'$, so each $H \in \mc{H}_n$ has normalizer in $G$ of index $n/a'$.  Thus the number of $G$-conjugacy classes in $\mc{H}_n$ is
\[
|[\mc{H}_n]| = \frac{a'|\mc{H}_n|}{n} = \frac{a'\varphi_k(n)}{n} = \varphi_k(a'). \qedhere
\]
\end{proof}

\begin{rem}
In the situation of Lemma~\ref{lem:metacyclic_2pt:conjugacy}, with $n > 1$, then the set $\mc{H}_n$ admits an action of $s$ that does not preserve any $G$-conjugacy class, and hence $|[\mc{H}_n]|$ must be even.  We can check this from the formula for $|[\mc{H}_n]|$ in the case that $\mc{H}_n$ is nonempty, as follows.  Write $a' = \mathrm{gcd}(a-1,n)$.  If $n$ is even then $k$ is even (since $k+1$ is a unit modulo $n$) and $a'$ is even; thus $\varphi_k(a')$ is even.  If some odd prime $p$ divides $n$ but not $k$, then $p-1$ divides $\varphi_k(a')$, so again $\varphi_k(a')$ is even.  Thus we may assume that $n$ is odd and that every prime dividing $n$ also divides $k$, implying in particular that $k+2$ is coprime to $n$.  However, given that $s^2=1$, we see that 
\[
y = s(sys)s = x^{(k+2)l}y,
\]
so $(k+2)l$ is a multiple of $n$.  Then $l$ is a multiple of $n$, so every prime dividing $n$ divides $\mathrm{gcd}(k,l)$, which is impossible for $n>1$.
\end{rem}

\subsection{Groups of rank $1$ Lie type}

We will say a $2$-transitive permutation group $G$ is of \defbold{small rank Lie type} if its socle is a nonabelian simple group of one of the following forms, in its natural $2$-transitive action:
\[
\PSL_2(q) \; (q \ge 4), \; \mathrm{PSU}_3(q) \; (q \ge 3), \; {^2\mathrm{B}}_2(2^e) \; (e \ge 3 \text{ odd}), \; {^2\mathrm{G}}_2(3^e) \; (e \ge 3 \text{ odd}), \; \PSL_3(q).
\]
The first four types are of Lie rank $1$, while $\PSL_3(q)$ has Lie rank $2$.  (The group ${^2 \mathrm{G}_2(3)} \cong \PGaL_2(8)$ in its $2$-transitive action on $28$ points is also of rank $1$ Lie type, however we exclude it from this discussion as it has already been dealt with in Lemma~\ref{lem:no_ldc:specific}.)  To clarify, the underlying field $\Fb$ of the group is $\Fb_{q^2}$ in the unitary case and $\Fb_q$ otherwise, where $q = p^e$ is a power of the prime $p$; we take the $2$-transitive action of $\PSL_{n+1}(q)$ to be on the set $\Omega_0 = P_n(q)$ of lines in $V = \Fb^{n+1}_q$, which extends to an action of $\PGaL_{n+1}(q)$.

We set another hypothesis to establish some notation.  Note that this hypothesis is compatible with Hypothesis~\ref{not:psl} in the case of socle $\PSL_3(q)$.
\begin{hyp}\label{not:small_rank}
We suppose that $G$ is a group acting on the set $\Omega_0$.  Write $Z$ for the kernel of the action of $G$ on $\Omega_0$.  We suppose one of the following holds:
\begin{enumerate}[(i)]
\item $Z = \triv$ and $G$ has socle $S$ of rank $1$ Lie type, with $\Omega_0$ being the natural $2$-transitive $S$-set;
\item We have $G \le \GaL_3(q)$ and $\Omega_0$ is the set of lines in $V = \Fb^3_q$; $Z$ is the group of scalar matrices in $\GL_3(q)$; and $G \ge S$ where $S = Z\SL_3(q)$.
\end{enumerate}

Enlarge $G$ to the $2$-transitive group $\overline{G}$ and set the parameter $t_0$, as follows:
\begin{enumerate}[(a)]
\item If $S =  \PSL_2(q)$ and $q$ is odd, set $t_0=2$ and $\overline{G} = \grp{\PGL_2(q),G}$.
\item If $S = \mathrm{PSU}_3(q)$ and $q + 1$ is a multiple of $3$, set $t_0=3$ and $\overline{G} = \grp{\mathrm{PGU}_3(q),G}$.
\item If $S = Z\SL_3(q)$ and $q-1$ is a multiple of $3$, set $t_0=3$ and $\overline{G} = \grp{\GL_3(q),G}$.
\item Otherwise, set $t_0=1$ and $\overline{G} = G$.
\end{enumerate}
Let $\Fb$ be the underlying field of $S$. If $S \in \{\PSL_2(q),\mathrm{PSU}_3(q)\}$, let $\tilde{G}$ be the central extension of $\overline{G}$ obtained by lifting $S$ to the corresponding special linear/unitary group, otherwise let $\tilde{G} = \overline{G}$.  Then $\tilde{G}$ has a standard action by semilinear maps on $V = \Fb^{r(S)}$ where 
\[
r(\PSL_{n+1}(q))=n+1, \; r(\mathrm{PSU}_3(q))=3, \; r({^2\mathrm{B}}_2(q))=4, \; r({^2\mathrm{G}}_2(q)) = 7.
\]
We let $\tilde{G}_\GL$ be the subgroup of $\Fb$-linear elements of $\tilde{G}$, let $\overline{G}_\GL$ be the image of $\tilde{G}_\GL$ in $\overline{G}$ and let $G_\GL = \overline{G}_\GL \cap G$.  Let $e_G = |G:G_{\GL}|$ and let $f_G = 2e/e_G$ in the unitary case and $f_G=e/e_G$ otherwise.

\

If $G$ is of rank $1$ Lie type, we take $x_0 \in \overline{G}_\GL$ to be a diagonal element generating a maximal torus. We also have $\phi^{f_G} \in \tilde{G}$ where $\phi$ is the standard Frobenius map on $V$; we write $y_0$ for the image of $\phi^{f_G}$ in $\overline{G}$ and note that $y_0$ normalizes $\grp{x_0}$.\footnote{There are some complications in attaching a field automorphism to a unitary group, but there is no ambiguity with socle $\mathrm{PSU}_3(q)$, see \cite{BHRD}.}  We take points $\omega,\omega' \in \Omega_0$ such that $\ol{G}(\omega,\omega') = \grp{x_0} \rtimes \grp{y_0}$ (see \cite[\S7.7]{DixonMortimer}), and then take $s \in S$ of order $2$ such that $s$ swaps $\omega$ and $\omega'$ and commutes with $y_0$, and such that $sx_0s = x^{k+1}_0$, where if $S =  \mathrm{PSU}_3(q)$ then $k = -(q+1)$, otherwise $k=-2$.

\

For $S = Z\SL_3(q)$, we choose $\omega$ and $\omega'$ to be the standard lines $\alpha_0$ and $\alpha_1$ respectively in $V$; we take $s$ as in Hypothesis~\ref{not:psl}. We consider the decomposition
\[
\overline{G}_\GL(\alpha_0) = W \rtimes Z\Lambda
\]
where $\Lambda \cong \GL_2(q)$ fixes $v_0$ and acts in the natural manner on $\grp{v_1,v_2}_q$, take a Singer cycle $\hat{x}_0 \in \Lambda$, and then set $x_0 = \hat{x}^{q+1}_0$.  Notice that $x_0$ acts as a scalar on $\grp{v_1,v_2}_q$, so $x_0\alpha_1=\alpha_1$; on the other hand, $W\grp{\hat{x}_0}$ acts transitively on $P_2(q) \setminus \{\alpha_0\}$.  Both $x_0$ and $sx_0s$ are diagonal matrices in the standard basis, so they commute.  Writing $R = WZ\grp{\hat{x}_0}$, we see that $\N_{\overline{G}(\alpha_0)}(R)$ can be put in a form
\[
\N_{\overline{G}(\alpha_0)}(R) = WZ \rtimes \grp{\hat{x}_0,y_0},
\]
where we can regard $\grp{\hat{x}_0}$ as a copy of $\GL_1(q^2)$, and where $y_0$ has order $2e_G$, with 
\[
y_0\hat{x}_0y\inv_0 = \hat{x}^{p^{f_G}}_0.
\]
Since $R$ acts transitively on $P_2(q) \setminus \{\alpha_0\}$, we can choose $y_0 \in \overline{G}(\alpha_0,\alpha_1)$.  Let $Z^*$ be as defined in Lemma~\ref{lem:psl_2pt} and let $x_* = sx_0sx\inv_0$.  One finds that $sy_0sy\inv_0 \in Z^*\grp{x_*}$; for a later argument we will want to ensure that in fact $sy_0sy\inv_0 \in Z^*\grp{x^3_*}$, which we can achieve by replacing $y_0$ with $x^a_*y_0$ for a suitable $a \in \Zb$.  After multiplying by an element of $Z$ we can ensure that $y_0$ fixes $v_0$.  The element $y^{e_G}_0$ then fixes $v_0$ and belongs to $\GL_3(q)$, so $y^{e_G}_0 \in \Lambda$.

\

In all cases,
\[
\overline{G} = (S\grp{x_0})\grp{y_0},
\]
where $S\grp{x_0} = \overline{G}_{\GL}$, and we have $|(S\grp{x_0}) \cap \grp{y_0}|=2$ if $S = Z\SL_3(q)$ and $|(S\grp{x_0}) \cap \grp{y_0}|=1$ otherwise.

Write $x = x^{t_G}_0$ for the smallest positive power of $x_0$ contained in $G$ (so $t_G \in \{1,t_0\}$).  For $S = Z\SL_3(q)$, we also have the smallest power $\hat{x} := \hat{x}^{t_G}_0$ of $\hat{x}_0$ contained in $G$, so $\grp{x} = \grp{\hat{x}^{q+1}}$.  Similarly, the element $y_0 \in \ol{G}$ is not always an element of $G$; the most we can ensure is that there is an element $y := x^{r_G}_0y_0$ of $G$, where $0 \le r_G < t_G$.  (The details of how we have chosen $y_0$ will become relevant when $t_G > 1$, as in this case we will find some differences in $2$-by-block-transitive actions between the case $r_G=0$ and the case $r_G \neq 0$.)\end{hyp}

With the almost simple $2$-transitive groups of rank $1$ Lie type, the stabilizer of a pair of points in $\Omega_0$ is cyclic or metacyclic.  We can thus apply the results of the previous subsection.

\begin{prop}\label{prop:LDC:rank_one}
Let $G$ be a group satisfying Hypothesis~\ref{not:small_rank}, of rank $1$ Lie type acting $2$-transitively on the set $\Omega_0$.  Write $\mc{H}_n$ for the class of subgroups $L \le G(\omega)$ such that $|G(\omega):L|=n$ and $G$ has $2$-by-block-transitive action on $G/L$.
\begin{enumerate}[(i)]
\item We can write $G(\omega) = P \rtimes \grp{x,y}$, where $P$ is a $p$-group acting regularly on $\Omega_0 \setminus \{\omega\}$.
\item Let $L \in \mc{H}_n$ for some $n \ge 1$.  Then $L = PN\grp{z}$, where 
\[
z \in \grp{x}y; \; N = \grp{x^n,y^n} \unlhd \grp{x,y}.
\]
The quotient $G(\omega)/PN$ is of the form $\grp{\ol{x}} \rtimes \grp{\ol{z}}$ where $\grp{\ol{x}}$ and $\grp{\ol{z}}$ both have order $n$.
\item Set $o_G = |\grp{x}:\grp{y^{e_G}}|$ and write $\mathrm{gcd}(e_G,o_G) = 2^{e_0}p^{e_1}_1 \dots p^{e_r}_r$, where $p_1,\dots,p_r$ are distinct odd primes.  For each of the odd primes $p_i$ define a condition

$(\mathrm{U}_i)$ $S = \mathrm{PSU}_3(q)$; $p_i$ divides $q+1$; for $p_i=3$ we also require that $9$ divides $q+1$ or $r_G=0$.

We set 
\[
e'_0 = 
\begin{cases}
e_0 &\mbox{if} \;  t_G=2, \; e_G \text{ is even}, \; r_G=1,\; p^{f_G} \equiv 1 \mod 4\\
1 &\mbox{if} \;  t_G=2, \; e_G \text{ is even}, \; r_G=1,\; p^{f_G} \equiv 3 \mod 4\\
0 &\mbox{otherwise}
\end{cases}.
\]
For the exponents of the odd primes $p_i$, we set
\[
e'_i = 
\begin{cases}
e_i &\mbox{if} \;  p^{f_G} \equiv 1 \mod p_i \text{ and $(\mathrm{U}_i)$ is false}\\
0 &\mbox{otherwise}
\end{cases}.
\]
Then the values of $n$ for which $\mc{H}_n$ is nonempty are the divisors of
\[
d_G = 2^{e'_0}p^{e'_1}_1 \dots p^{e'_r}_r.
\]
\item Let $n > 1$ be a divisor of $d_G$ and let $h_n$ be the number of $G(\omega)$-conjugacy classes in $\mc{H}_n$.  Then
\[
h_n = \varphi_{t_G}(\mathrm{gcd}(p^{f_G}-1,n)).
\]
\end{enumerate}
\end{prop}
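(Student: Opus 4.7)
The plan is to reduce the classification of 2-by-block-transitive extensions to an application of Lemma~\ref{lem:metacyclic_2pt:conjugacy} to the pair stabilizer $G(\omega,\omega')=\grp{x,y}$, using the Borel decomposition of a rank $1$ group.

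Part (i) is the standard Iwasawa/Borel decomposition of a rank $1$ group of Lie type: $G(\omega)$ is a Borel subgroup of $G$, with unipotent radical $P$ acting regularly on $\Omega_0\setminus\{\omega\}$ and complement $\grp{x,y}$ equal to $G(\omega,\omega')$, exactly as arranged in Hypothesis~\ref{not:small_rank}.

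For (ii), the key step is to establish $P\le L$. Granted this, set $H=L/P\le G(\omega)/P\cong\grp{x,y}$; since $s$ normalizes $G(\omega,\omega')$ and $L(\omega')P/P=H$, quotienting the equation $G(\omega)=sL(\omega')sL$ from Lemma~\ref{lem:LDC_2pt} by $P$ yields $\grp{x,y}=sHsH$, which is precisely the hypothesis of Lemma~\ref{lem:metacyclic_2pt:conjugacy}. That lemma then produces $H=N\grp{z}$ with $N=\grp{x^n,y^n}$ and $z\in\grp{x}y$, and describes the quotient $\grp{x,y}/N$ as $\grp{\overline x}\rtimes\grp{\overline z}$ with both factors of order $n$; lifting back yields $L=PN\grp{z}$. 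For the inclusion $P\le L$: the 2-by-block-transitivity forces $L$ to act transitively on $\Omega_0\setminus\{\omega\}$, giving $|L|=|P|\cdot|L(\omega')|$. For Suzuki and Ree types, $e_G$ is coprime to $p$, so $P$ is a normal Sylow $p$-subgroup of $G(\omega)$ and $P\le L$ by Sylow. For $\PSL_2(q)$, one argues instead that $\grp{x,y}$ acts irreducibly on $P$ (as an $\Fb_p$-module), so any proper $\grp{x,y}$-invariant subgroup of $P$ is trivial; a mod-$P$ version of the coset equation from Lemma~\ref{lem:LDC_2pt} shows $L\cap P$ is invariant under a large enough subgroup of $\grp{x,y}$, forcing $L\cap P=P$.

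Parts (iii) and (iv) follow from Lemma~\ref{lem:metacyclic_2pt:conjugacy} after computing the parameters $a,k,l$ in our setup. From Hypothesis~\ref{not:small_rank}, $y_0$ acts on $\grp{x_0}$ as a Frobenius $\lambda\mapsto\lambda^{p^{f_G}}$, so $yxy^{-1}=x^a$ with $a=p^{f_G}$. The relations $sx_0 s=x_0^{k_0+1}$ with $k_0=-2$ (for $\PSL_2,\PSL_3,{}^2\mathrm{B}_2,{}^2\mathrm{G}_2$) or $k_0=-(q+1)$ (for $\mathrm{PSU}_3$), together with $sy_0 s=y_0$ and $y=x_0^{r_G}y_0$, give explicit formulas for $k$ and $l$ in terms of $k_0,t_G$ and $r_G$. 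The numbers $|G(\omega,\omega'):\grp{x}|=e_G$ and $|\grp{x}:\grp{y^{e_G}}|=o_G$ are the analogues of the two indices appearing in Lemma~\ref{lem:metacyclic_2pt:conjugacy}. The number $d_0$ there is the largest divisor of $\gcd(e_G,o_G)$ coprime to $\gcd(k,l)$; the conditions for $e'_i\ne 0$ in (iii) encode both Lemma~\ref{lem:metacyclic_2pt:conjugacy}'s congruence conditions on $a\bmod 4$ and $a\bmod p_i$, and the automatic vanishing of $e'_i$ whenever $p_i$ divides $\gcd(k,l)$. In particular, the $\mathrm{PSU}_3$ exceptions arise because $k_0=-(q+1)$ carries the primes dividing $q+1$, and the condition $(*)$ for $e'_0\ne 0$ records when $t_G=2$ prevents $2$ from dividing $\gcd(k,l)$. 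Formula (iv) is the conjugacy class count $\varphi_k(\gcd(a-1,n))$ of Lemma~\ref{lem:metacyclic_2pt:conjugacy}, rewritten as $\varphi_{t_G}(\gcd(p^{f_G}-1,n))$ after observing that primes dividing $k$ but not $t_G$ are already excluded from divisors of $d_G$.

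The main obstacle is the detailed bookkeeping in (iii): for each of the four rank $1$ types one must track $k_0,t_G,r_G$ and the prime factorizations of $e_G,o_G,q\pm 1$, then match these with Lemma~\ref{lem:metacyclic_2pt:conjugacy}'s description of $d$. The $\mathrm{PSU}_3$ case is particularly delicate because the exceptional behaviour at the prime $3$ depends on whether $9$ divides $q+1$. A secondary difficulty is proving $P\le L$ in the $\PSL_2(q)$ case when $p\mid e_G$; here the Sylow argument fails and one needs the irreducibility of $\grp{x,y}$ on $P$ combined with a more careful mod-$P$ analysis of the coset equation.
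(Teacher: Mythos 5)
Your overall strategy coincides with the paper's: part (i) is the Borel decomposition, and parts (ii)--(iv) are obtained by applying Lemma~\ref{lem:metacyclic_2pt:conjugacy} to the equation $\grp{x,y} = sL(\omega')sL(\omega')$ supplied by Lemma~\ref{lem:LDC_2pt}, followed by the bookkeeping of the parameters $a = p^{f_G}$, $k$ and $l$; your treatment of (iii) and (iv), including the role of $\gcd(k,l)$ in the $\mathrm{PSU}_3$ exceptions and the passage from $\varphi_k$ to $\varphi_{t_G}$, matches the paper.

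The genuine gap is your argument that $P \le L$. First, the claim that $e_G$ is coprime to $p$ for Ree type is false: for ${}^2\mathrm{G}_2(3^e)$ the exponent $e$ is odd but may be divisible by $3$ (e.g.\ $e=3$ or $e=9$), so $p=3$ can divide $e_G$ and $P$ is then not a Sylow $p$-subgroup of $G(\omega)$. Second, you omit the unitary case entirely: for $\mathrm{PSU}_3(q)$ one has $e_G \mid 2e$, so again $p$ may divide $e_G$, and here $P$ is nonabelian of order $q^3$, so the ``irreducible $\Fb_p$-module'' fallback you propose for $\PSL_2$ does not transfer. Third, even in the $\PSL_2$ case, irreducibility only shows that a $\grp{x,y}$-invariant subgroup of $P$ is trivial or all of $P$; you still need to exclude $L \cap P = \triv$, which requires a counting step you do not supply. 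The paper avoids all of this with a single uniform argument: Lemma~\ref{lem:metacyclic_2pt:conjugacy}, applied to $L(\omega')$ inside $\grp{x,y}$, already forces the index $|\grp{x,y}:L(\omega')|$ to be coprime to $p$ (the admissible indices divide $d_0$, which divides $|\grp{x}:\grp{y^{e_G}}|$, a number coprime to $p$); combined with the transitivity of $L$ on $\Omega_0 \setminus \{\omega\}$, which gives $|L| = |P|\,|L(\omega')|$ and hence $|G(\omega):L| = |\grp{x,y}:L(\omega')|$, one concludes that $|P : P \cap L|$ is a power of $p$ dividing a number coprime to $p$, so $P \le L$. You should therefore apply the metacyclic lemma to $L(\omega')$ first and deduce $P \le L$ as a consequence, rather than attempting to establish it beforehand by a case division that does not cover all four types.
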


\begin{proof}
We have $\overline{G}(\omega) = P \rtimes \overline{G}(\omega,\omega')$, where $P$ is a $p$-group contained in $S$ that acts regularly on $\Omega \setminus \{\omega\}$ (see \cite[\S7.7]{DixonMortimer}), and $\overline{G}(\omega,\omega') = \grp{x_0} \rtimes \grp{y_0}$.  The description of $G(\omega)$ in (i) follows easily.  Note that $\grp{S,x} = G_\GL$ has index $e_G$ in $G$.

Let $L \in \mc{H}_n$ for some $n \ge 2$.  Then $L(\omega')sL(\omega')s= \grp{x,y}$ by Lemma~\ref{lem:LDC_2pt}.  The group $\grp{x,y}$ is metacyclic and the normal subgroup $\grp{x}$ is normalized by $s$.  Note that $k$ is a multiple of $t$, hence a multiple of $t_G$.  We see that 
\[
sys = sx^{r_G}_0sy_0 = x^{kr_G}_0y = x^{l_G}y \in \grp{x}y,
\]
where $l_G = kr_G/t_G$; note that $l_G$ can be nonzero only if $t_G > 1$.  In particular, $s$ centralizes the quotient $\grp{x,y}/\grp{x}$.  We can thus apply Lemma~\ref{lem:metacyclic_2pt:conjugacy} to limit the possibilities for $L(\omega')$.

Since $x$ has order coprime to $p$, Lemma~\ref{lem:metacyclic_2pt:conjugacy} ensures that $L(\omega')$ has index in $\grp{x,y}$ coprime to $p$.  We deduce that $|G(\omega):L|$ is likewise coprime to $p$, and thus $L = PL(\omega')$.  Using Lemma~\ref{lem:LDC_2pt} and given $L \le G(\omega)$, we now see that $G$ has $2$-by-block-transitive action on $G/L$ if and only if $P \le L$ and $G(\omega,\omega') = L(\omega')sL(\omega')s$; thus the groups $L \in \mc{H}_n$ are exactly the products $PM$ such that $|\grp{x,y}:M| = n$ and $\grp{x,y} = MsMs$.

The statement (ii) now follows directly from Lemma~\ref{lem:metacyclic_2pt:conjugacy}.  Note also that $L \cap sLs = N$, and the $G(\omega)$-conjugacy classes of $\mc{H}_n$ naturally correspond to conjugacy classes of subgroups $M$ of $\grp{x,y}$ of index $n$ satisfying $\grp{x,y} = MsMs$.

We have
\[
yx_0y\inv = x^{a_G}, \text{ where } a_G = p^{f_G}.
\]
We also have
\[
y^{e_G} = x^{r'_G}_0, \text{ where } r'_G = r_G\sum^{e_G-1}_{i=0} a^i_G.
\]
In particular, for $t_G > 1$ the value of $r_G$ is subject to the additional constraint that $y^{e_G} \in \grp{x}$, so $r'_G$ will be a multiple of $t_G$.  Set
\[
o_G := |\grp{x}:\grp{y^{e_G}}| = \mathrm{gcd}(|\Fb^*|/t_G,r'_G/t_G).
\]

As in Lemma~\ref{lem:metacyclic_2pt:conjugacy} we write $k_0 = \mathrm{gcd}(k,l_G)$.  We have the following cases:
\begin{enumerate}[(A)]
\item If $S = \PSL_2(q)$, $t_G=2$ and $r_G =1$, then $k_0=1$;
\item If $S = \mathrm{PSU}_3(q)$, then either $k_0 = q+1$ or $k_0 = (q+1)/3$, with the latter occurring if $t_G=3$, $r_G > 0$ and $q+1$ is a multiple of $3$;
\item Otherwise, $k_0=2$.
\end{enumerate}

Starting from
\[
\mathrm{gcd}(e_G,o_G) = 2^{e_0}p^{e_1}_1 \dots p^{e_r}_r,
\]
we take a divisor $d_G = 2^{e'_0}p^{e'_1}_1 \dots p^{e'_r}_r$, in such a way that the divisors of $d_G$ satisfy the conditions set out in Lemma~\ref{lem:metacyclic_2pt:conjugacy}.  Specifically, the exponents $e'_i$ are taken as follows.

For the exponent of $2$, in case (B) we see that $e'_0=0$ if $p=2$ (since $o_G$ is odd) and also if $p > 2$ (since $k_0$ is even).  In case (C), $k_0$ is even, so again we have $e'_0=0$.  So let us assume we are in case (A).  We may also assume that $e_G$ and $q-1$ are even; in particular, $q$ is an even power of an odd prime, so $q \equiv 1 \mod 4$.  Under these assumptions, 
\[
\mathrm{gcd}(e_G,o_G) = \mathrm{gcd}\left(e_G, \frac{q-1}{2},\frac{1}{2}\sum^{e_G-1}_{i=0} a^{i}_G \right).
\]
If $a_G \equiv 3 \mod 4$ we see that $\mathrm{gcd}(e_G,o_G)$ is even, and we take $e'_0=1$.  If instead $a_G \equiv 1 \mod 4$, we take $e'_0 = e_0$, that is, the exponent of the largest power of $2$ dividing $\mathrm{gcd}(e_G,o_G)$.

For odd primes, we need to exclude the prime divisors of $k_0$.  In cases (A) and (C) there are no odd prime divisors of $k_0$.  In case (B), the odd prime divisors of $k_0$ are the same as those of $q+1$, with the following exception: if $r_G > 0$ (implying that $t_G = 3$) and $q+1$ is not divisible by $9$, then $3$ divides $q+1$ but not $k_0$.  Thus the prime $p_i$ divides $k_0$ if and only if $(\mathrm{U}_i)$ holds.

We now take
\[
e'_i = 
\begin{cases}
e_i &\mbox{if} \;  p^{f_G} \equiv 1 \mod p_i \text{ and $(\mathrm{U}_i)$ is false}\\
0 &\mbox{otherwise}
\end{cases}.
\]

Part (iii) now follows from Lemmas~\ref{lem:LDC_2pt} and~\ref{lem:metacyclic_2pt:conjugacy}. 

It remains to count the conjugacy classes, using the formula from Lemma~\ref{lem:metacyclic_2pt:conjugacy}.  If $n$ is even, then $e'_0 > 0$ and we see that we are in case (A), so $k=-2$ and $t_G=2$.  Lemma~\ref{lem:metacyclic_2pt:conjugacy} then yields
\[
h_n =  \varphi_{-2}(\mathrm{gcd}(a_G-1,n)) = \varphi_{t_G}(\mathrm{gcd}(p^{f_G}-1,n)).
\]
From now on we may assume $n$ is odd.  If the socle is $\mathrm{PSU}_3(q)$ then we have $k = -(q+1)$, so
\[
h_n =  \varphi_{q+1}(\mathrm{gcd}(p^{f_G}-1,n)).
\]
However, we know that $n$ is coprime to all prime divisors of $q+1$, except possibly the prime $3$; if $3$ divides both $q+1$ and $n$, then we are in the case where $t_G = 3$.  So in fact
\[
h_n =  \varphi_{t_G}(\mathrm{gcd}(p^{f_G}-1,n)).
\]
In the remaining case,
\[
h_n =  \varphi_2(\mathrm{gcd}(p^{f_G}-1,n)) = \varphi(\mathrm{gcd}(p^{f_G}-1,n)) = \varphi_{t_G}(\mathrm{gcd}(p^{f_G}-1,n)).
\]
This completes the proof of part (iv).
\end{proof}

We deduce that the resulting action is never sharply $2$-by-block-transitive.

\begin{cor}\label{cor:LDC:rank_one:sharp}
Let $G$ and $\Omega_0$ be as in Proposition~\ref{prop:LDC:rank_one}.  Then the action of $G$ on $\Omega_0$ does not extend to a sharply $2$-by-block-transitive action of $G$.
\end{cor}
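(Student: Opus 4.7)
The strategy is to combine the structural description of candidate point stabilizers in Proposition~\ref{prop:LDC:rank_one} with a direct lower bound on the stabilizer of a distant pair, exhibited as a nontrivial normal subgroup of that stabilizer.

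Suppose $G$ had a sharply $2$-by-block-transitive action on some $G/L$ extending the action on $\Omega_0$, with block size $n = |G(\omega):L|$. For $n = 1$ we have $L = G(\omega)$ and the stabilizer of a distant pair is $\grp{x,y}$, which is nontrivial since $|\grp{x}| \ge 2$ in every rank~$1$ Lie type family under Hypothesis~\ref{not:small_rank}. For $n \ge 2$, Proposition~\ref{prop:LDC:rank_one}(ii) gives $L = PN\grp{z}$, where $N = \grp{x^n, y^n}$ is a normal subgroup of $\grp{x,y}$, $z \in \grp{x}y$, and $n$ divides $d_G$. The stabilizer of the distant pair $(L, sL)$ in the action on $G/L$ is $L \cap sLs$; the plan is to show $N \le L \cap sLs$ and $N \ne \triv$.

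For the inclusion, Hypothesis~\ref{not:small_rank} gives $sxs = x^{k+1}$, hence $sx^n s = (x^n)^{k+1} \in \grp{x^n} \le N$. Moreover, $sys = x^{l_G}y$ (as computed in the proof of Proposition~\ref{prop:LDC:rank_one}), so by the identity in equation~(\ref{eq:metacyclic}),
\[
sy^n s = (x^{l_G}y)^n = x^{l_G \alpha(n)}y^n.
\]
The condition $n \mid d_G$ forces $n \mid \alpha(n)$, via the Hull--Dobell step in the proof of Lemma~\ref{lem:metacyclic_2pt:conjugacy}, so $sy^n s \in \grp{x^n} y^n \subseteq N$. Hence $sNs = N$, giving $N \le L \cap sLs$.

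For nontriviality, the same proof shows $|\grp{x,y}:N| = n^2$, so $|N| = e_G \cdot |\grp{x}|/n^2$. The bounds $n \le d_G \le \gcd(e_G, o_G) \le \min(e_G, |\grp{x}|)$ force $n^2 \le e_G \cdot |\grp{x}|$, with strict inequality unless $|\grp{x}| = e_G$. The only remaining obstacle (and the one I expect to require the most care) is verifying $|\grp{x}| > e_G$ in each of the four rank~$1$ Lie families. Writing $q = p^e$, we have $e_G \le 2e$, whereas $|\grp{x}|$ is at least $(q-1)/t_G$ for $\PSL_2$, at least $(q^2-1)/(3 t_G)$ for $\mathrm{PSU}_3$, and $q - 1$ for $^2\mathrm{B}_2$ and $^2\mathrm{G}_2$. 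The standing restrictions ($q \ge 4$ for $\PSL_2$, $q \ge 3$ for $\mathrm{PSU}_3$, $q = 2^e$ with $e \ge 3$ odd for $^2\mathrm{B}_2$, $q = 3^e$ with $e \ge 3$ odd for $^2\mathrm{G}_2$) make $|\grp{x}| > e_G$ immediate, even in the smallest admissible cases such as $\PSL_2(4)$ or $\mathrm{PSU}_3(3)$. Therefore $N \ne \triv$, and $L \cap sLs$ is nontrivial, contradicting sharpness.
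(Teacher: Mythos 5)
Your proof is correct and follows essentially the same route as the paper: both arguments identify the stabilizer of a distant pair with $N = \grp{x^n,y^n}$, of index $n^2$ in $\grp{x,y}$, and rule out $N = \triv$ by comparing $n \le d_G \le \min(e_G,|\grp{x}|)$ with the size of $\grp{x,y}$ (the paper phrases the endgame as the impossibility of $|\grp{x}| = o_G = e_G = d_G = n$, which it reduces to the inequality $p^e \le t_G e + 1$, resp.\ $p^{2e} \le 2t_G e + 1$, leaving only the soluble groups $\PSL_2(2)$, $\PSL_2(3)$, $\mathrm{PSU}_3(2)$, all excluded by the hypothesis of nonabelian simple socle). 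One small repair to your final numerical check: you should use $e_G \le e$ for the families defined over $\Fb_q$ and reserve $e_G \le 2e$ for $\mathrm{PSU}_3$, since with the blanket bound $e_G \le 2e$ the required inequality $(q-1)/t_G > 2e$ actually fails for $\PSL_2(4)$ and degenerates to an equality for $\PSL_2(5)$ and $\PSL_2(9)$, whereas $(q-1)/t_G > e$ does hold for all $q \ge 4$.
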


\begin{proof}
We retain the notation of Proposition~\ref{prop:LDC:rank_one}; it is enough to consider the action of $G$ on $G/L$ for $L \in \mc{H}_n$.  From the structure of the point stabilizer, we see that $N$ is the stabilizer of a distant pair in $G/L$ and $N$ is a subgroup of $\grp{x,y}$ of index $n^2$.  So in order to have a sharply $2$-by-block-transitive action with block size $n$, in the notation of Proposition~\ref{prop:LDC:rank_one} we would need 
\[
|\grp{x}| = o_G = e_G = d_G = n.
\]
We note that the equation $|\grp{x}| = |e_G|$ is rarely satisfied.  In the non-unitary case, $|\grp{x}| = (p^e-1)/t_G$ and $e_G$ divides $e$, so in order to have $|\grp{x}| = |e_G|$ we would need
\[
p^e \le t_Ge + 1.
\]
If $t_G=1$, the above inequality is only satisfied when $p=2$ and $e=1$.  If $t_G=2$ then $p \ge 3$, and the only solution is $(p,e)=(3,1)$.  So we are left with the groups $\PSL_2(2) \cong \Sym(3)$ and $\PSL_2(3) \cong \Alt(4)$, both of which are soluble.

In the unitary case, $|\grp{x}| = (p^{2e}-1)/t_G$ and $e_G$ divides $2e$, so we would need
\[
p^{2e} \le 2t_Ge + 1.
\]
If $t_G=1$ there are no solutions.  If $t_G=3$ then $p^e \equiv 2 \mod 3$, and to satisfy the inequality we would need $p=2$ and $e=1$.  Thus $G = \mathrm{PSU}_3(2) \cong C^2_3 \rtimes Q_8$, acting on $9$ points.  However, in this case $G$ is again a soluble group.
\end{proof}

\begin{rem}
The three groups $\PSL_2(2), \PSL_2(3), \mathrm{PSU}_3(2)$ appearing in the proof of Corollary~\ref{cor:LDC:rank_one:sharp} are all sharply $2$-transitive in their natural action, thus sharply $2$-by-block-transitive with trivial blocks.  These examples are excluded from the context of Proposition~\ref{prop:LDC:rank_one}, since we assume $G$ has nonabelian simple socle.
\end{rem}

We also rule out the possibility of proper $3$-by-block-transitive actions arising from Proposition~\ref{prop:LDC:rank_one}.  Since the action on blocks must be $3$-transitive, we only need to consider the case where $G$ has socle $\PSL_2(q)$ and $\Omega_0$ is the projective line.

\begin{prop}\label{prop:PSL:triples}
Let $\PSL_2(q) \le G \le \PGaL_2(q)$ be such that the action of $G$ on the projective line $\Omega_0$ is $3$-transitive, and suppose $\Omega_0$ extends to a $2$-by-block-transitive action on $\Omega = \Omega_0 \times B$, with block size $|B|=n>1$.  Then $G$ has $cn^2$ equally-sized orbits on $\Omega^{[3]}$, where $c=2$ if $n$ is even and $c=1$ otherwise.  In particular, $G$ is not $3$-by-block-transitive.
\end{prop}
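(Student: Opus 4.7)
The plan is to reduce counting $G$-orbits on $\Omega^{[3]}$ to an orbit count on fibres via the $G$-equivariant projection $\pi:\Omega\to\Omega_0$. Since $G$ is $3$-transitive on $\Omega_0$, the number of $G$-orbits on $\Omega^{[3]}$ equals the number of orbits of the triple stabilizer $H := G(\pi\omega_1,\pi\omega_2,\pi\omega_3)$ on the fibre product $F := [\omega_1]\times[\omega_2]\times[\omega_3]$ of size $n^3$. In the coordinates of Hypothesis~\ref{not:small_rank} I take $\pi\omega_1=\infty$, $\pi\omega_2=0$, $\pi\omega_3=c\in\Fb_q^\times$, so $H=\grp{g_c}$ is cyclic of order $e_G$ with $g_c=x^{(1-p^{f_G})\gamma}y$ and $\gamma=\log_\mu c$; a direct computation using $yx=x^{p^{f_G}}y$ yields $g_c^n=x^{(1-p^{nf_G})\gamma}y^n$, which lies in $N$ because Lifting-the-Exponent at each prime $\ell\mid n$ gives $n\mid p^{nf_G}-1$.

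The first key step is to show $\grp{g_c^n}$ acts trivially on each of the three fibres. For $[\omega_1]$ this is immediate from Proposition~\ref{prop:LDC:rank_one}(ii), since $N\subseteq PN$ and $PN$ sits in the kernel of $G(\pi\omega_1)$ acting on $[\omega_1]$. For $[\omega_2]$, conjugating by $h_2:z\mapsto 1/z$ gives $h_2^{-1}g_c^n h_2=x^{-(1-p^{nf_G})\gamma}y^n\in N$, which reduces to the $[\omega_1]$ case. For $[\omega_3]$, I use the involution $s':z\mapsto cz/(z-c)\in\PGL_2(q)\le G$ that swaps $\pi\omega_1,\pi\omega_3$ and fixes $\pi\omega_2$: because $s'$ has trivial field-automorphism part while $g_c$ has field-automorphism part of full order $e_G$, the element $s'g_cs'^{-1}\in\grp{g_c}$ must share that part, forcing $s'g_cs'^{-1}=g_c$. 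Commutation then transports the trivial action on $[\omega_1]=s'^{-1}[\omega_3]$ to $[\omega_3]$.

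Hence the action of $H$ on $F$ factors through $C_n:=\grp{g_c}/\grp{g_c^n}$. Using Proposition~\ref{prop:LDC:rank_one}(ii) I identify each fibre with $\Zb/n$ so that $g_c$ acts on $[\omega_i]$ as the affine map $j\mapsto \alpha j+\beta_i$, where $\alpha\equiv p^{f_G}\pmod n$ is independent of $i$ (the conjugations by $h_2$ and $s'$ preserve the $\bar z$-class in $\grp{x,y}/N$); the three shifts satisfy $\beta_i\equiv -r\pmod \ell$ for every prime $\ell\mid n$, where $z=x^ry$ parameterizes $L=PN\grp{z}$. In the $3$-transitive setting $\PGL_2(q)\le G$, so $t_G=1$, condition $(*)$ of Proposition~\ref{prop:LDC:rank_one}(iii) fails and $n$ is necessarily odd; and Lemma~\ref{lem:metacyclic_2pt:conjugacy} (with $k=-2$ and $l_G=0$) then gives $\gcd(r,n)=1$, so $\ell\nmid\beta_i$ for every $\ell\mid n$.

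The fixed-point count $f_i(k):=|\Fix_{[\omega_i]}(g_c^k)|$ counts solutions of $(\alpha^k-1)j\equiv -S_k\beta_i\pmod n$ with $S_k:=1+\alpha+\cdots+\alpha^{k-1}$, solvable iff $\gcd(\alpha^k-1,n)\mid S_k\beta_i$. The identity $(\alpha-1)S_k=\alpha^k-1$ together with Lifting-the-Exponent at each odd prime $\ell\mid n$ (valid since $\ell\mid\alpha-1$) yields $v_\ell(S_k)=v_\ell(k)$; combined with $v_\ell(\beta_i)=0$, a prime-by-prime check shows the solvability condition fails for $0<k<n$, whence $f_i(k)=0$ for $0<k<n$. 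Burnside's lemma then gives
\[
\#\text{orbits of }C_n\text{ on }F = \tfrac{1}{n}\bigl(n^3+0\bigr) = n^2,
\]
and the common stabilizer $\grp{g_c^n}$ on every triple in $F$ makes all $G$-orbits on $\Omega^{[3]}$ of uniform size $(q{+}1)q(q{-}1)n$. Since $n^2>1$ for $n>1$, $G$ is not $3$-by-block-transitive; the value $c=1$ matches our regime and the $c=2$ clause is vacuous as $n$ must be odd. The main obstacle is the prime-by-prime valuation bookkeeping in the last step — coordinating $v_\ell$ of $S_k$, $\alpha-1$, $\beta_i$ and $n$ simultaneously so that the Lifting-the-Exponent estimates collapse solvability precisely to $n\mid k$; the remaining steps are immediate from Proposition~\ref{prop:LDC:rank_one} and the symmetries $h_2,s'$.
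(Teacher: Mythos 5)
Your reduction — projecting $\Omega^{[3]}$ onto the set of distant triples of blocks and counting orbits of the triple-of-blocks stabilizer $\grp{g_c}$ on the fibre $[\omega_1]\times[\omega_2]\times[\omega_3]$ via Burnside — is a legitimate alternative to the paper's argument, which instead counts orbits of the distant-pair stabilizer $N=\grp{x^n,y^n}$ on $\Omega\setminus([\omega_1]\cup[\omega_2])$ and never needs the fixed-point/valuation bookkeeping. In the regime you actually treat, your computation (triviality of $\grp{g_c^n}$ on all three fibres, the affine model $j\mapsto\alpha j+\beta_i$ with $\alpha\equiv p^{f_G}$, $\gcd(\beta_i,n)=1$, and the LTE estimate collapsing solvability to $n\mid k$) checks out and correctly yields $n^2$ free orbits.

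However, there is a genuine gap: the claim that $3$-transitivity of $G$ on the projective line forces $\PGL_2(q)\le G$ (hence $t_G=1$, hence $n$ odd, hence ``the $c=2$ clause is vacuous'') is false. For $q$ odd, a group with $G\cap\PGL_2(q)=\PSL_2(q)$ is still $3$-transitive provided the coset of field automorphisms it contains is $x_0^{r_G}y_0$ with $r_G=1$ — the paper's proof explicitly isolates this as the second alternative ``$t_G=2$ and $r_G=1$''. The smallest instance is $\mathrm{M}_{10}\le\PGaL_2(9)$: here $t=t_G=2$, $e_G=2$, $r_G=1$, so condition $(*)$ of Proposition~\ref{prop:LDC:rank_one}(iii) holds, $d_G=2$, and a $2$-by-block-transitive extension with even block size $n=2$ exists. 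This is exactly the case in which the count is $2n^2$ rather than $n^2$ (in the paper's proof, the exponent $n'$ drops to $n/2$ because $y_0^2\in x^{2c'}y^2N$). Your argument does not extend as written to this case: $y=x_0y_0$ is no longer a pure field automorphism, the elements $h_2$ and $s'$ you conjugate by need not lie in $G$ (their determinants are $-1$ and $-c^2$, which need not be squares), and the conclusion itself changes. So the even-$n$ branch of the statement — which is the only place the constant $c$ does any work — is unproved.
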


\begin{proof}
We retain the notation of Proposition~\ref{prop:LDC:rank_one}.  We can take a block stabilizer of the form $G([\omega_1]) = P \rtimes \grp{x,y}$.  The action of $P \rtimes \grp{x,y}$ is transitive on the points of $\Omega \setminus [\omega_1]$, with $A = \grp{x,y}$ as the stabilizer in $G([\omega_1])$ of a block $[\omega_2]$ and a point stabilizer $H = G([\omega_1],\omega_2)$ of index $n$ in $A$.  Note that $A$ acts transitively on $[\omega_2]$, so $H$ only depends on the choice of representative $\omega_2$ up to conjugation in $A$.  Let $\Omega_1$ be the set of blocks of $\Omega$ other than $[\omega_1]$ and $[\omega_2]$; thus $|\Omega_1|=q-1$.  Considering the action of $\grp{x_0}$ on $\Omega_0$, we see that action of $\grp{x}$ on $\Omega_1$ is free, so it has $t_G$ orbits.  Meanwhile, given that $y = x^{r_G}_0y_0$ where $y_0$ acts on $\Omega_0$ as a field automorphism, we see that if $r_G=0$, then $\grp{y}$ stabilizes some $[\omega_3] \in \Omega_1$, whereas if $r_G=1$ then $y$ swaps the two orbits of $\grp{x}$ on $\Omega_1$.  Overall, $A$ must act transitively on $\Omega_1$ for $G$ to be $3$-transitive, so either $t_G=1$ or $t_G=2$ and $r_G=1$.  We then see that $y^{t_G}_0$ is the smallest power of $y_0$ contained in $G$, and we have
\[
\grp{y^{t_G}_0} = G([\omega_1],[\omega_2],[\omega_3]); \quad K:= G([\omega_1],\omega_2,[\omega_3]) = \grp{y^{t_G}_0} \cap H.
\]
Thus $\grp{y^{t_G}_0} \cap H^*$ fixes $[\omega_2]$ pointwise, where $H^*$ is the intersection of all $A$-conjugates of $H$.  From Proposition~\ref{prop:LDC:rank_one} we see that $H^* \ge N := \grp{x^n,y^n}$.  Thus $\grp{y^{t_Gn'}_0}$ fixes $[\omega_2]$ pointwise, where $n'$ is the least exponent such that $y^{t_Gn'}_0 \in N$.  If $n$ is odd we see that $n'=n$; if $n$ is even, then $t_G=2$ and $y^2_0 = x^{2c'}y^2$ for some $c' \in \Zb$, so $n' = n/2$.  Since $G$ is $3$-transitive on $\Omega_0$, we find that for each $\grp{y^{t_G}_0}$-invariant block $[\omega']$, there is some $g \in \N_G(\grp{y^{t_G}_0})$ such that $[\omega'] = [g\omega_2]$.  Thus $\grp{y^{t_Gn'}_0}$ fixes pointwise every $\grp{y^{t_G}_0}$-invariant block.

The stabilizer of the pair $(\omega_1,\omega_2)$ is $N$, so we must count the number of $N$-orbits on $\Omega \setminus ([\omega_1] \cup [\omega_2])$.  First consider the action of $N$ on $\Omega_1$: here the orbits are equally-sized since $N$ is normal in $A$, and the number of orbits is 
\[
|A:N\grp{y^{t_G}_0}| = \frac{n^2}{|N\grp{y^{t_G}_0}:N|} = \frac{n^2}{n'} = cn.
\]
Then if we consider the $N$-orbits on $\Omega \setminus ([\omega_1] \cup [\omega_2])$ passing through the block $[\omega_3]$, we see that their intersections with $[\omega_3]$ are the orbits of
\[
G(\omega_1,\omega_2,[\omega_3]) = \grp{y^{t_G}_0} \cap N = \grp{y^{t_Gn'}_0}.
\]
However, we have established that $\grp{y^{t_Gn'}_0}$ fixes $[\omega_3]$ pointwise, so there are $n$ orbits of $N$ passing through $[\omega_3]$.  The same situation will arise for any block $[\omega'] \in \Omega_1$, with some $A$-conjugate of $\grp{y^{t_Gn'}_0}$ playing the role of $\grp{y^{t_Gn'}_0}$.  Thus $N$ has $cn^2$ equally-sized orbits on $\Omega \setminus ([\omega_1] \cup [\omega_2])$.  Since $G$ is transitive on distant pairs, we deduce that $G$ has $cn^2$ equally-sized orbits on distant triples.
\end{proof}

\begin{ex}\label{ex:ldc}
Let $p$ be prime and let $n$ be coprime to $2p$, $n > 1$.  Let $m$ be the multiplicative order of $p$ modulo $n$ and suppose that $m$ is odd; for example, one can take $(p,n,m)$ to be $(2,7,3)$, or for examples where $n$ is not a prime power, take $(p,n,m)$ to be $(2,161,33)$ or $(3,143,15)$.  Write $q = p^{mn}$.  Then the field of order $q^2$ has an automorphism $y$ of order $2n$ given by $\lambda \mapsto \lambda^{p^{m}}$, which restricts to an automorphism of order $n$ of the field of order $q$.  Note that we have ensured $q \equiv 1 \mod n$; since $n$ is odd it follows that $q+1$ is a unit modulo $n$, whereas $q-1$ is a multiple of $n$.

Set $G = \mathrm{PGU}_3(q) \rtimes \grp{y}$, in its standard $2$-transitive action on a set $\Omega$.  Then $G$ has a point stabilizer $G(\omega) = P \rtimes G(\omega,\omega')$ where $P$ is a $p$-group acting regularly on $\Omega \setminus \{\omega\}$  and $G(\omega,\omega') = \grp{x,y}$, where $x$ generates a copy of $\Fb^*_{q^2}$.  We also have an involution $s \in G$ such that $s \not\in \grp{x,y}$, $sxs = x^{-q}$ and $sys = y$.  We have ensured that the order of $x$ is divisible by $n$; let $N = \grp{x^n,y^n}$.  Write $\ol{a}$ for the image of an element $a$ under the quotient map $\grp{x,y,s} \rightarrow \grp{x,y,s}/N$.  Since $p^m \equiv 1 \mod n$, the group $\grp{x,y}/N$ is abelian and takes the form of a direct product $\grp{\ol{x}} \times \grp{\ol{y}}$ with both factors being cyclic of order $n$.  Writing $\ol{z} = \ol{x}\ol{y}$, then $\ol{z}$ has order $n$ and $\ol{s}\ol{z}\ol{s} = \ol{x}^{-q}\ol{z}$.  We deduce from Proposition~\ref{prop:LDC:rank_one} that $\grp{x,y}/N = \grp{\ol{z}}\ol{s}\grp{\ol{z}}\ol{s}$.  We thus obtain a proper $2$-by-block-transitive action of $G$ with point stabilizer
\[
L = P \rtimes \grp{x^n,y^n,xy},
\]
which is normal in $G(\omega)$ of index $n$.

Similarly, we can take $G = \mathrm{PGL}_2(q) \rtimes \grp{y}$, again with the standard $2$-transitive action on a set $\Omega$, where now we take $y$ to have order $n$ (in order to make $G$ almost simple).  We obtain a proper $2$-by-block-transitive action of $G$ in the same manner as the previous paragraph, with only the following minor differences: this time $x$ generates a copy of $\Fb^*_{q}$, and the involution $s$ is such that $sxs = x\inv$.  If $p=2$ or $p=3$, the same construction applies to $G = {^2\mathrm{B}}_2(2^{mn}) \rtimes \grp{y}$ or $G = {^2\mathrm{G}}_2(3^{mn}) \rtimes \grp{y}$ respectively.

In the previous two paragraphs, we have arranged that $t_G  = 1$.  The number of equivalence classes of $2$-by-block-transitive actions of $G$ extending the standard $2$-transitive action with block size $n$ is therefore
\[
h_n = \varphi(\mathrm{gcd}(p^m-1,n)).
\]
So for instance if $(p,n,m) = (2,7,3)$, we have $\mathrm{gcd}(p^m-1,n) = 7$ and $h_n = 6$, and thus for the field automorphism $y: \lambda \mapsto \lambda^{2^{3}}$, we obtain $6$ permutationally inequivalent $2$-by-block-transitive actions for $G$, where $G$ is one of the groups
\[
 \mathrm{PGL}_2(2^{21}) \rtimes \grp{y}, \; {^2\mathrm{B}}_2(2^{21}) \rtimes \grp{y}, \; \mathrm{PGU}_3(2^{21}) \rtimes \grp{y},
\]
acting on $7(2^{21}+1),7(2^{42}+1),7(2^{63}+1)$ points respectively with blocks of size $7$.

The existence of these examples can be contrasted with the fact that the groups $\mathrm{PSU}_3(p^{mn})$, ${^2\mathrm{B}}_2(2^{mn})$ and ${^2\mathrm{G}}_2(3^{mn})$ do not occur as the socle of any imprimitive rank $3$ permutation group, see \cite[Propositions~4.7, 4.8, 4.9]{Rank3}. Indeed the rank of the imprimitive permutation groups produced in this example (in other words, the number of double cosets of $L$ in $G$) is $n+1$, and we needed to assume $n > 1$ and $n$ odd for the construction.
\end{ex}

\subsection{Quadratic-extended projective planes}

Before continuing with the classification of finite block-faithful $2$-by-block-transitive actions, we consider a geometric construction that produces both finite examples and infinite examples of sharply $2$-by-block transitive groups.  The construction is not strictly necessary for the proof of the main theorems, but it provides a better intuition for why case (b) of Theorem~\ref{thmintro:2bbtrans} arises than the direct analysis of subgroup structure. The idea of the construction is due to Hendrik Van Maldeghem (oral communication).

\begin{ex}\label{ex:qepp}
Let $K$ be a (finite or infinite) field and let $L = K(\alpha)$ be a quadratic extension of $K$.  We form the $L$-vector space $L^3 = Lv_0 \oplus Lv_1 \oplus Lv_2$; within $L^3$, we have a $K$-subspace $K^3 = Kv_0 \oplus Kv_1 \oplus Kv_2$.  The group $\PGL_3(L)$ acts on the projective plane $P_2(L)$ of $1$-dimensional subspaces of $L^3$; the projective plane $P_2(K)$ of $K^3$ then naturally embeds into $P_2(L)$ (henceforth we identify $P_2(K)$ with its image in $P_2(L)$).  By restricting matrix entries with respect to the given basis, we obtain a subgroup $\PGL_3(K) \le \PGL_3(L)$ stabilizing $P_2(K)$ inside $P_2(L)$.  Given a $2$-dimensional $L$-subspace $M$ of $L$, then $\dim_K(M)=4$, so $\dim_K(M \cap K^3) \in \{1,2\}$.  The two cases for $\dim_K(M \cap K^3)$ can be seen in the projective plane as follows: letting $l_M$ be the line of $P_2(L)$ given by $M$, if $\dim_K(M \cap K^3) = 2$ then $l_M$ contains a line of $P_2(K)$, whereas if $\dim_K(M \cap K^3) = 1$ then $l_M$ contains a unique point $p_K(l_M)$ of $P_2(K)$, and we say $l_M$ is \defbold{tangent} to $P_2(K)$.

We now define the \defbold{quadratic-extended projective plane} $\Pi =: P^L_2(K)$ to be the set of lines of $P_2(L)$ tangent to $P_2(K)$, and let $\PGL_3(K)$ act on $\Pi$ in the natural manner.  Consider a pair $\pi,\pi' \in \Pi$ with $\pi \neq \pi'$.  Then $\pi$ and $\pi'$ intersect in a unique point $p$ of $P_2(L)$.  There are two possibilities: either $p \in P_2(K)$, in which case $p = p_K(\pi) = p_K(\pi')$ and we say $\pi'$ is close to $\pi$, or $p \not\in P_2(K)$, in which case we say $\pi'$ is distant from $\pi$.  Closeness is then a $\PGL_3(K)$-invariant equivalence relation $\sim_K$ on $\Pi$.

\

\emph{Claim 1: $\PGL_3(K)$ is sharply transitive on triples $(p_0,p_1,p_2)$ with the following properties:
\begin{enumerate}[(i)]
\item $p_1,p_2 \in P_2(K)$;
\item $p_0 \in P_2(L) \setminus P_2(K)$;
\item $\{p_0,p_1,p_2\}$ is not collinear in $P_2(L)$;
\item The line through $p_0$ and $p_i$ is tangent to $P_2(K)$ for $i = \{1,2\}$.
\end{enumerate}}

It is clear that $\PGL_3(K)$ respects all the properties listed, so it acts on the specified set of triples.  Clearly also $\PGL_3(K)$ is transitive on pairs of distinct points in $P_2(K)$, so it is enough to consider triples $(p_0,p_1,p_2)$ where $p_1$ and $p_2$ are given: let us take $p_1 = Lv_1$ and $p_2 = Lv_2$, and let $H$ be the subgroup of $\GL_3(K)$ stabilizing $p_1$ and $p_2$; note that $H$ contains the diagonal matrices of $\GL_3(K)$.  A point $p_0 \in P_2(L) \setminus P_2(K)$ satisfying conditions (ii), (iii) and (iv) takes the form $Lw$ for $w = v_0 + \beta_1v_1 + \beta_2v_2$, such that $L = K(\beta_1) = K(\beta_2)$.  Since $L = K \oplus K\alpha$, we see that there is an element $h \in H$ sending $v_0$ to $v_0 + a_1v_1 + a_2v_2$ for some $a_1,a_2 \in K$ and fixing $v_1$ and $v_2$, chosen so that $hw = v_0 + b_1 \alpha v_1 + b_2 \alpha v_2$ for some $b_1,b_2 \in K^*$.  Then by applying a diagonal matrix, we can send $hw$ to the specific vector $x = v_0 + \alpha v_1 + \alpha v_2$.  Since $p_0 = Lw$ was given in general form, we deduce that $\PGL_3(K)$ acts transitively on the triples given in the claim.  Consider now an element $g$ of $\GL_3(K)$ stabilizing $Lx$, $Lv_1$ and $Lv_2$.  Then 
\[
\exists c_1,c_2,d_0,d_1,d_2 \in K: gv_0 = d_0v_0 + c_1v_1 + c_2v_2; \; gv_1 = d_1v_1; \; gv_2 = d_2v_2,
\]
and hence
\[
gx = d_0 v_0 + (c_1 + d_1\alpha)v_1 + (c_2 + d_2\alpha)v_2.
\]
Since $gx \in Lx$ we have
\[
d_0 = c_1\alpha^{-1} + d_1 = c_2\alpha^{-1} + d_2,
\]
from which we see that $0 = c_1 = c_2$ and $d_0 = d_1 = d_2$, so $g$ represents the trivial element of $\PGL_3(K)$.  Thus $\PGL_3(K)$ acts sharply transitively on the given triples, proving the claim.

\

\emph{Claim 2: The stabilizer $\Lambda = \PGL_3(K)(\pi)$ of a single tangent line $\pi$ is of the form $\AGL_1(L)$.}

Take for instance the tangent line $\pi$ spanned by $Lv_0$ and $Lw$, where $w = v_1+\alpha v_2$ and consider $g \in \GL_3(K)$ preserving $\pi$: say $g$ has matrix entries $a_{ij}$ with respect to the standard basis.  Then $g$ stabilizes $Lv_0$ and
\[
gv_0 = a_{00}v_0 + a_{10}v_1 + a_{20}v_2; \; gw = (a_{01}+a_{02}\alpha )v_0 + (a_{11}+ a_{12}\alpha)v_1 + (a_{21}\alpha^{-1}+ a_{22}) \alpha v_2,
\]
so $a_{11}+a_{12} \alpha  = a_{21} \alpha^{-1} + a_{22} \neq 0$ and hence $(a_{21},a_{22})$ is determined by $(a_{11},a_{12})$, and also $a_{10} = a_{20} = 0$.  With respect to the ordered basis $(v_0,w,v_2)$, then $g$ has matrix entries $b_{ij}$ such that $b_{10} = b_{20} = b_{21} = 0$ and $b_{ij} = a_{ij}$ for $ij \in \{00,02,12\}$; the remaining entries are
\[
b_{01} = a_{01}+a_{02} \alpha ; \quad b_{11} = a_{11}+ a_{12} \alpha  \neq 0; \quad b_{22} = a_{22} - a_{12}\alpha.
\]

As an element of $\GL_3(K)(\pi)$, we see that $g$ is determined by the values $a_{00},b_{01},b_{11}$, but we can choose independently $a_{00} \in K^*, b_{01} \in L, b_{11} \in L^*$.  Setting $b_{11}=1$ yields a normal subgroup $F$ of $\GL_3(K)(\pi)$ consisting of those elements that act trivially on $L^3/Lv_0$, whereas setting $a_{00} = 1$ and $b_{01} = 0$ yields a subgroup $F^*$ consisting of those elements that fix $v_0$ and stabilize $Lw$; we also have the centre of $\GL_3(K)$ inside $\GL_3(K)(\pi)$, which arises as those elements of $\GL_3(K)(\pi)$ with $b_{01}=0$ and $b_{11}=1$.  Given the degrees of freedom of the parameters, we see that $\GL_3(K)(\pi) = (Z \times F) \rtimes F^*$.  We see that the group $F \rtimes F^*$ acts faithfully on $Lv_0 + Lw$ as a copy of $\AGL_1(L)$, and hence
\[
\Lambda = \GL_3(K)(\pi)/Z \cong \AGL_1(L).
\]
completing the proof of Claim 2.

\

From Claim 1, we deduce that $\PGL_3(K)$ is sharply transitive on distant pairs in $\Pi$: distant pairs $(\pi,\pi')$ are in bijection with triples as in Claim 1 by sending $(\pi,\pi')$ to the triple $(p_K(\pi),p_K(\pi'),\pi \cap \pi')$.  Thus we have a sharply $2$-by-block transitive action of $\PGL_3(K)$ on $\Pi$ with respect to the equivalence relation $\sim_K$, with block stabilizer $\AGL_2(K)$ and point stabilizer $\AGL_1(L)$.

The action of $\PGL_3(K)$ can be extended by any group $R$ of field automorphisms of $L$ that normalize $K$.  Let $R$ act on $L^3$ by acting on the coefficients of the standard basis vectors; this yields a natural semidirect product $\PGL_3(L) \rtimes R$.  Then $R$ preserves $K^3$, so there is a subgroup of the form $\PGL_3(K) \rtimes R$; in the induced action on $P_2(L)$, the subplane $P_2(K)$ is preserved by $R$, as is the set of tangent lines to $P_2(K)$.  Thus $R$ acts on $\Pi$ and respects the block structure, yielding a $2$-by-block-transitive (but not necessarily block-faithful) action of $\PGL_3(K) \rtimes R$ on $\Pi$.  Since $\PGL_3(K)$ is already transitive on $\Pi$, the point stabilizers are likewise extended by a copy of $R$.

More variations on the space $\Pi$ are possible; one that is relevant to the finite groups setting is the following.  We suppose that $L/K$ is Galois, $\mathrm{Gal}(L/K) = \grp{\theta}$; note that in this case, if $R \ge \grp{\theta}$ is a group of field automorphisms of $L$ normalizing $K$, then $\theta$ is central in $R$.  There is then a natural action of $\grp{\theta}$ on $L^3$ by acting on the coordinates in the standard basis, and hence an action of $\grp{\theta}$ on $P_2(L)$; the latter action fixes $P_2(K)$ pointwise and preserves the set of tangent lines, so we obtain an action of $R$ on $\Pi$.  For each $\pi \in \Pi$ there is thus a conjugate $\ol{\pi} := \theta(\pi)$ that is close to $\pi$, with $\ol{\ol{\pi}} = \pi$.  Since $\PGL_3(K)$ acts by elements of the fixed field, it commutes with the action of $\theta$; we also have $R$ commuting with $\theta$.  Hence the pairs of conjugate tangent lines form a system of imprimitivity for the action of $\PGL_3(K) \rtimes R$, and we obtain another $2$-by-block-transitive $(\PGL_3(K) \rtimes R)$-set $\Omega/\theta$.  Indeed, since the action of $\theta$ has been factored out, we have an action of $\PGL_3(K) \rtimes (R/\theta)$ on $\Pi/\theta$.

This means, for example, if $e$ is a natural number and $|K| = p^{e}$, we have an action of $\PGL_3(K) \rtimes \grp{\phi}$ on $\Pi$, where $\phi$ is the Frobenius map on $L$, and hence has order $2e$, and the action of $\phi^e = \theta$ on $\Pi$ is nontrivial but stabilizes each block, so the action is not block-faithful.  The permutation group induced on the blocks in $\PGaL_3(K) \cong \PGL_3(K)\grp{\phi}/\grp{\theta}$, and the action of the latter extends to a $2$-by-block-transitive action on $\Pi/\theta$ with point stabilizer $\AGaL_1(L)$.
\end{ex}

\

In the rest of this subsection and the next, we will consider the remaining possibilities for $G$ to have $2$-by-block-transitive action on $G/L$, where $G$ satisfies Hypothesis~\ref{not:psl}.  The PD case and the exceptional case (c) of Proposition~\ref{prop:psl:nonstandard} have already been dealt with; we are thus left with case (b) of Proposition~\ref{prop:psl:nonstandard}, where $G/Z$ has socle $\PSL_3(q)$ and we have an action of $L$ on $(V/\alpha_0) \setminus \{0\}$ as a transitive subgroup $A$ of $\GaL_2(q)$ that does not contain $\SL_2(q)$.  We may thus assume $G$ satisfies Hypothesis~\ref{not:small_rank}.

We recall from Lemma~\ref{lem:psl_2pt} the structure of $\ol{G}(\alpha_0,\alpha_1)$, and the subgroups $W_0, W_1 = sW_1s$ and $Z^* =W_0 \times W_1 \times Z$, which is a subgroup of $G_\GL(\alpha_0,\alpha_1)$.  In the present context, we can write $\ol{G}(\alpha_0,\alpha_1)$ as $Z^*\ol{B}$, where
\[
\ol{B} \cap \GL_{n+1}(q) =: \ol{B}_\GL = (s\grp{x_0}s \times \grp{x_0}); \; \ol{B} = \ol{B}_\GL\grp{y_0}.
\]
Since $Z^* \le G$, we obtain a similar description of $G(\alpha_0,\alpha_1)$ as $Z^*B$, where $B:=G \cap \ol{B}$ is a subgroup of index $t_G$ in $\ol{B}$, and likewise $B_\GL:=G_\GL \cap \ol{B}$ has index $t_G$ in $\ol{B}_\GL$.  More precisely, recalling that $G$ has elements $x = x^{t_G}_0$ and $y = x^{r_G}_0y_0$ for $0 \le r_G < t_G$, we see that $B = B_\GL\grp{y}$ and $B_\GL$ consists of elements of the form $sx^a_0sx^b_0$ such that $a+b$ is a multiple of $t_G$.

Note that $G_\GL(\alpha_0,\alpha_1) \cap \grp{y} = \grp{y^{e_G}}$ and $|\grp{y^{e_G}}|=2$.  If $p=2$ then $y^{e_G} \in Z^*$, since $Z^*$ is normal and of odd index in $G_\GL(\alpha_0,\alpha_1)$.

The main outstanding case we need to deal with is that of a QP action (recall Definition~\ref{def:psl}), where $A$ is contained in a copy of $\GaL_1(q^2) \le \GaL_2(q)$.  As we saw in Example~\ref{ex:qepp}, there is indeed a $2$-by-block-transitive action of $\PGaL_3(q)$ on a set $\Omega$ with point stabilizer $\AGaL_1(q^2)$, where $\Omega$ takes the form of the set of $\mathrm{Gal}(q^2/q)$-orbits on the quadratic-extended projective plane.  In the next proposition we will learn that all QP actions are minor variations on this theme.

First consider $L$ as a subgroup of $\ol{G}(\alpha_0)$.  Up to conjugation, $L$ will be contained in a group of the form
\[
\ol{L}^{\GaL_1} = WZ \rtimes \grp{\hat{x}_0,y_0},
\]
where $\grp{\hat{x}_0,y_0}$ corresponds to the subgroup of $\GaL_1(q^2)$ of index $e/e_G$ that contains $\GL_1(q^2)$.

Taking into account that $L \le G$, in fact $L$ is conjugate in $\ol{G}(\alpha_0)$ to a subgroup of
\[
L^{\GaL_1} = WZ \rtimes \grp{\hat{x},y},
\]
which has index $t_G$ in $\ol{L}^{\GaL_1}$.  Now $\ol{G}(\alpha_0) = G(\alpha_0)\grp{\hat{x}_0}$, and for $t \in \Zb$ we see that
\[
G(\alpha_0) \cap \hat{x}^t_0L^{\GaL_1}\hat{x}^{-t}_0 \le L^{\GaL_1}.
\]
So in fact $L$ is $G(\alpha_0)$-conjugate to a subgroup of $L^{\GaL_1}$.  So to understand QP actions of $G$, it is enough to consider $G/L$ such that $L \le L^{\GaL_1}$.

For future reference, we name some subgroups of $L^{\GaL_1}$ of small index: for $d_x,d_y \in \{1,2\}$, write
\[
L_{d_x,d_y} = WZ \rtimes \grp{\hat{x}^{d_x},\hat{x}^{d_x-1} y^{d_y}}; \; L^*_{21} = WZ \rtimes \grp{\hat{x}^2,y}.
\]

\begin{prop}\label{prop:psl:plane-field}
Assume Hypothesis~\ref{not:psl} with $n=2$ and Hypothesis~\ref{not:small_rank}.  Let $L^{\GaL_1}$ be as above, and write $\mc{H}$ for the class of subgroups $L$ of $L^{\GaL_1}$ such that $Z \le L$ and such that $G$ has $2$-by-block-transitive action on $G/L$.    Then the groups $L_{11}, L_{12}, L_{21}, L^*_{21}$ as above are all in distinct $G$-conjugacy classes, except if $p=2$, in which case $L_{11} = L_{21} =  L^*_{21}$.  The set $\mc{H}$ consists of those groups of the form $L_{d_x,d_y}$, where $d_x,d_y \in \{1,2\}$ and $d_xd_y = |L^{\GaL_1}:L| \le 2$, such that the following additional conditions are satisfied:
\begin{enumerate}[(i)]
\item if $e_G$ is even, then $d_x=d_y=1$;
\item if $t_G=3$, then $r_G \neq 0$, $e_G$ is a multiple of $3$ and $p^{e/e_G} \equiv 1 \mod 3$.
\end{enumerate}
In particular, we have $0 \le |\mc{H}| \le 3$.

Moreover, if $L \in \mc{H}$, then
\[
|L \cap sLs:Z| = \frac{4e_G}{|L^{\GaL_1}:L|^2 t_G};
\]
the action of $G/Z$ on $G/L$ is sharply $2$-by-block-transitive if and only if $e_G = t_G$ (equivalently, $|G| = |\GL_3(q)|$) and either $d_y=2$, or $p>d_x=2$.
\end{prop}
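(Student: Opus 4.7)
The plan is to apply Lemma~\ref{lem:LDC_2pt}, which reduces $2$-by-block-transitivity of $G$ on $G/L$ (with block stabilizer $G(\alpha_0)$) to the single equation $G(\alpha_0) = sL(\alpha_1)sL$. By Claim 1 in the proof of Proposition~\ref{prop:psl:nonstandard}, any $L \in \mc{H}$ must satisfy $W \le L$, so combined with $Z \le L$ we have $WZ \le L$ and may write $L = WZ \cdot L_0$ with $L_0 \le T := \grp{\hat{x}, y}$.

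I would begin with a structural description. Since $y = x_0^{r_G} y_0$ with $y_0$ fixing each standard basis vector and $x_0$ acting as a scalar on $\grp{v_1,v_2}_q$, the element $y$ fixes $\alpha_1$ pointwise. The element $\hat{x}$, by contrast, projects to $\GaL_2(q)$ as a nontrivial power of a Singer cycle and so stabilizes $\alpha_1$ only through the scalar powers in $\grp{x_0}$. Hence $L(\alpha_1) = WZ \rtimes ((L_0 \cap \grp{x_0})\grp{y})$, and the $L$-orbit of $\alpha_1 \bmod \alpha_0$ among the $q+1$ lines of $V/\alpha_0$ is governed by $L_0$ modulo $L_0\cap \grp{x_0}$. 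The transitivity of $L$ on $P_2(q)\setminus\{\alpha_0\}$, combined with the double coset equation, forces $|T:L_0| \le 2$; enumerating subgroups of $T$ of index at most $2$ then yields exactly the candidates $L_0 = \grp{\hat{x}^{d_x}, \hat{x}^{d_x-1}y^{d_y}}$ with $d_x, d_y \in \{1,2\}$ and $d_xd_y \le 2$.

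Next I would test each candidate against the double coset equation. The case $(d_x,d_y)=(1,1)$, giving $L=L^{\GaL_1}$, always works. For the two index-$2$ cases, I would reduce $G(\alpha_0) = sL(\alpha_1)sL$ modulo the subgroup $Z^* = W_0 \times W_1 \times Z$ of Lemma~\ref{lem:psl_2pt} to a congruence on the metacyclic quotient $T/(L_0 \cap sL_0s)$, using $yx_0 y\inv = x_0^{p^{f_G}}$ and the action of $s$ (inversion on $\grp{\hat{x}_0}$ modulo scalars, trivial on the $\grp{y_0}$-part). This congruence has a solution precisely when conditions (i)--(iii) hold: (i) arises because $d_y=2$ yields a proper index-$2$ subgroup only when $e_G$ is even, and in that regime the reduced equation fails; (ii) arises because $\hat{x}$ has odd order when $p=2$, so $d_x = 2$ forces $L_0 = T$; and (iii) isolates the extra divisibility required when $t_G=3$, where $3\mid q-1$ couples to the Singer-cycle reduction and to the constraint $y\in G$.

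Finally, to compute $|L\cap sLs:Z|$ and derive the sharpness criterion, I would use $W\cap sWs = \triv$ (an upper/lower-triangular intersection, analogous to the argument preceding Lemma~\ref{lem:psl_2pt}(ii)) to deduce $(WZ) \cap s(WZ)s = Z$, so $(L\cap sLs)/Z \cong L_0 \cap sL_0s$. Lemma~\ref{lem:psl_2pt}(ii), applied to the subgroup structure of $T$ together with $|T:L_0|=d_xd_y$, then yields
\[
|L\cap sLs:Z| = \frac{4e_G}{|L^{\GaL_1}:L|^2\, t_G}.
\]
Since $L\cap sLs$ is the stabilizer of the distant pair $(\alpha_0,\alpha_1)$, the $G/Z$-action on $G/L$ is sharply $2$-by-block-transitive iff $|L\cap sLs:Z| = 1$, yielding $|L^{\GaL_1}:L| = 2$ (so $d_xd_y=2$) and $e_G = t_G$; using $|G_{\GL}:Z\SL_3(q)| = \gcd(3,q-1)/t_G$ and $|G| = |G_{\GL}|\cdot e_G$, the latter rearranges to $|G| = |\GL_3(q)|$. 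The main obstacle will be the $t_G=3$ case: the Singer-cycle power $\hat{x} = \hat{x}_0^3$ interacts with the $3$-part of $q+1$ and with the constraint $y \in G$ via $r_G$, and isolating the hypotheses $r_G \neq 0$, $3\mid e_G$, $p^{e/e_G}\equiv 1 \pmod{3}$ of condition (iii) requires a careful audit of which elements of $\grp{\hat{x}_0, y_0}$ actually lie in $G$.
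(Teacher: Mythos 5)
Your overall strategy---reducing to the equation $G(\alpha_0)=sL(\alpha_1)sL$ via Lemma~\ref{lem:LDC_2pt}, invoking Proposition~\ref{prop:psl:nonstandard} to get $WZ\le L$, and then working modulo $Z^*$ in a metacyclic quotient to extract conditions (i)--(iii)---matches the paper's, but two steps as written do not go through. First, the bound $|T:L_0|\le 2$ is asserted rather than proved, and it is the hardest part of the proposition: transitivity of $L$ on $P_2(q)\setminus\{\alpha_0\}$ together with the double coset equation do not by themselves bound the index, since a priori $L_0$ could have large index in $\grp{\hat{x},y}$ while still participating in a factorization. The paper's Claim~2 obtains the bound by first deriving $|L\cap sLs:Z|=4e_G/(d^2t_G)$ from the counting formula of Corollary~\ref{cor:double_coset} (so that $d^2t_G$ divides $4e_G$), and then projecting $L(\alpha_1)$ into the abelian-by-cyclic group $B$ and applying Lemma~\ref{lem:metabelian_2pt} to the quotients $B/R_k$ for each prime $k$ dividing $q-1$, which forces the image of $L(\alpha_1)$ to be large prime by prime. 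Without Lemma~\ref{lem:metabelian_2pt} (or an equivalent), your enumeration of candidates has no starting point; note also that the formula for $|L\cap sLs:Z|$ is an \emph{input} to the bound on $d$ in the paper, not a consequence of first knowing the candidates. (A smaller omission of the same kind: your explanation of condition (i) only addresses the $d_y=2$ candidate, whereas the case $d_x=2$ with $e_G$ even must also be excluded, which the paper does with the parity argument of Claim~4.)

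Second, your structural derivation of $|L\cap sLs:Z|$ is incorrect. From $(WZ)\cap s(WZ)s=Z$ one cannot conclude $(L\cap sLs)/Z\cong L_0\cap sL_0s$: the intersection of the two semidirect products $WZ\rtimes L_0$ and $s(WZ)s\rtimes sL_0s$ contains ``mixed'' elements whose components in the two factors are separately nontrivial, so it is strictly larger than $\bigl((WZ)\cap s(WZ)s\bigr)\cdot(L_0\cap sL_0s)$ in general. For instance, for $G=\GL_3(3)$ and $L=WZ\rtimes\GaL_1(9)$ the correct value is $|L\cap sLs:Z|=4$, whereas $L_0\cap sL_0s$ is much smaller: the only powers of the Singer cycle $\hat{x}_0$ stabilizing $\alpha_1$ are the scalars of $\GL_2(3)$, and these do not lie in $sL_0s$ since they fix $v_0$ while every element of $s\grp{\hat{x}_0}s$ stabilizing $\alpha_2$ acts on $\grp{v_0,v_2}_q$ as a scalar. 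Moreover Lemma~\ref{lem:psl_2pt}(ii) computes $M\cap sMs$ only for $M=W\rtimes Z\SL_n(q)$ and gives no information about subgroups of $WZ\rtimes\grp{\hat{x},y}$. The correct (and much shorter) derivation is the pure counting argument of Corollary~\ref{cor:double_coset}, namely $|L\cap sLs:Z|=|L:Z|^2/(|G:Z|-|G(\alpha_0):Z|)$, which is the paper's Claim~1. Once that formula is in place, your deduction of the sharpness criterion from $|L\cap sLs:Z|=1$ is essentially sound.
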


\begin{proof}
By Proposition~\ref{prop:psl:nonstandard}, we only need to consider subgroups $L$ of $L^{\GaL_1}$ such that $WZ \le L$.  Thus we may assume that $L = WZ \rtimes (L \cap \grp{\hat{x},y})$.  Let $H_L = L(\alpha_1) \cap \grp{\hat{x},y}$ and note that $H_L \le B$.

Let $w \in W$, $z \in Z$ and $k,l \in \Zb$, and let $g = wz\hat{x}^ky^l$.  Then $y^l$ stabilizes $\alpha_1$ and $z$ acts trivially on $P_2(q)$, so $g\alpha_1 = \alpha_1$ if and only if $w\hat{x}^k\alpha_1 = \alpha_1$.  Since $w\hat{x}^k \in \GL_3(q)$ it is enough to consider the image of $v_1$.  Our choice of $\hat{x}$ ensures that $\hat{x}^kv_1 = \nu_1v_1+\nu_2v_2$ for some $\nu_1,\nu_2 \in \Fb_q$, and then $w\hat{x}^kv_1 = \nu_1v_1+\nu_2v_2 + \nu_0v_0$ for some $\nu_0 \in \Fb_q$.  Thus if $w\hat{x}^k\alpha_1 = \alpha_1$, then $\nu_2 = 0 = \nu_0$ and $\nu_1 \in \Fb^*_q$, in other words $\hat{x}^k$ stabilizes $\alpha_1$, and then also $w$ stabilizes $\alpha_1$, so $w \in W_0$.  Thus for all $L$ such that $WZ \le L \le L^{\GaL_1}$, we have
\[
L(\alpha_1) = W_0Z \rtimes H_L,
\]
and hence
\[
sL(\alpha_1)sL(\alpha_1) =  W_1ZsH_LsW_0ZH_L \subseteq Z^*sH_LsH_L.
\]
Write $G^+ = G(\alpha_0,\alpha_1)$ and $G^+_\GL = G_\GL(\alpha_0,\alpha_1)$.
Under our current assumptions we see that the equation (\ref{eq:linear_2pt}) from earlier is equivalent to
\begin{equation}\label{eq:2pt_reduced}
G^+ = Z^*sH_LsH_L.
\end{equation}

Note that $Z^*/Z$ is a $p$-Sylow subgroup of $G^+_\GL/Z$.  The intersection $\grp{x,y} \cap Z^*$ is therefore $\grp{y^{e_G}}$ in the case $p=2$, and trivial otherwise.

Let $WZ \le L \le L^{\GaL_1}$ and write $d= |L^{\GaL_1}:L|$.  Let $\mc{H}$ be the set of subgroups $L$ of $L^{\GaL_1}$ such that $G$ has $2$-by-block-transitive action on $G/L$.  We will proceed via a series of claims.

We have a restriction on $d$ using the size of the large double coset of $L$ in $G$.

\

\emph{Claim 1: If $L \in \mc{H}$, then
\[
|L \cap sLs:Z| = \frac{4e_G}{d^2 t_G}.
\]
In particular, $4e_G/(d^2 t_G)$ is an integer.
}

We have
\[
|L:Z| = \frac{|L^{\GaL_1}:Z|}{d} = \frac{2q^2(q^2-1)e_G}{dt_G};
\]
by comparison,
\[
|G(\alpha_0):Z| = \frac{q^3(q-1)(q^2-1)e_G}{t_G},
\]
so
\begin{align*}
|G:Z| - |G(\alpha_0):Z| &= |G(\alpha_0):Z|(|G:G(\alpha_0)|-1) \\
&= q^3(q-1)(q^2-1)(q^2+q)e_G/t_G = q^4(q^2-1)^2e_G/t_G.
\end{align*}
Applying Corollary~\ref{cor:double_coset} to $G/Z$, we deduce that
\[
|L \cap sLs:Z| = \frac{|L:Z|^2}{|G:Z| - |G(\alpha_0):Z|} = \frac{4q^4(q^2-1)^2e^2_Gt_G}{d^2t^2_Gq^4(q^2-1)^2e_G} = \frac{4e_G}{d^2 t_G}
\]
as claimed.

\

Since $L$ acts transitively on lines of $V$ other than $\alpha_0$, we can calculate $d$ as $d = |L^{\GaL_1}(\alpha_1):L(\alpha_1)|$; in turn, given the form taken by $L(\alpha_1)$, we see that $|L^{\GaL_1}(\alpha_1):L(\alpha_1)| = |H_{L^{\GaL_1}}:H_L|$, where $H_{L^{\GaL_1}} = \grp{x,y}$.  The next claim puts more restrictions on the value of $d$.

\

\emph{Claim 2: Suppose $L \in \mc{H}$.  If $e_G$ is even, then $d=1$; otherwise, $d \le 2$.}

We first consider integers $b > 0$ such that $x^ay^b \in H_L$ for some $a \in \Zb$.  We have $G^+ = G^+_\GL\grp{y}$ and $G^+_\GL \cap \grp{y} = \grp{y^{e_G}}$; since $s$ centralizes the quotient $G^+/G^+_\GL$, we see via (\ref{eq:2pt_reduced}) that $G^+ = G^+_\GL H_L$.  If $e_G$ is even then we deduce that $x^ay \in H_L$ for some $a \in \Zb$, and if $e_G$ is odd then $x^ay^2 \in H_L$ for some $a\in \Zb$.

Write $Q = H_L \cap B_\GL$ and $Q^{\GaL_1} = H_{L^{\GaL_1}} \cap B_\GL$.  We have $Q^{\GaL_1} = \grp{x}$ if $p=2$ and $Q^{\GaL_1} = \grp{x,y^{e_G}} \cong \grp{x} \times C_2$ if $p > 2$.

We claim next that $Q$ contains $x^l$ for $l$ coprime to $k$, for all odd primes $k$.  If $k=t_G=3$ and $q \not\equiv 1 \mod 9$, or if $k$ does not divide $q-1$, then $|\grp{x}|$ is coprime to $k$ and the conclusion is clear.  So assume that $k$ divides $q-1$, and if $k=t_G=3$, assume that $q \equiv 1 \mod 9$.  Let $R_k$ be group of $k$-th powers of $B_\GL$.  Then $B_\GL/R_k$ does not have a cyclic subgroup of index $\le 2$, so after applying Lemma~\ref{lem:metabelian_2pt} to the group $B/R_k$, we see that $Q$ is not contained in $R_k$.  We deduce that $Q$ contains $x^l$ where $l$ is coprime to $k$.

Since the previous paragraph applies to all odd primes $k$, we see that $Q$ contains $x^{2^a}$ for some $a \ge 0$; since $Q$ also contains $x^by$ or $x^by^2$ for some $b \in \Zb$, it follows that $d$ is a power of $2$.  If $p=2$ we deduce that in fact $x \in Q$, and the claim follows in this case.

We now assume $p>2$ and consider powers of $2$ dividing $d$.  If $p>2$ and $e_G$ is odd, we see from Claim 1 that $d$ cannot be a multiple of $4$, so $d \le 2$.

Now suppose $p>2$ and $e_G$ is even; in particular, $q$ is an even power of $p$, so $q-1$ is a multiple of $4$.  We consider the quotient $B/R_2$ of $B$, where $R_2$ is the set of squares in $B_\GL$; note that $R_2$ is normalized by $s$ and contains $y^{e_G}$, so we can write 
\[
B/R_2 = B_\GL/R_2 \times \grp{yR_2}; \; B_\GL/R_2 = \grp{sx^3_0sR_2} \times \grp{x^3_0R_2}.
\]
In order to achieve (\ref{eq:2pt_reduced}) we see that $H_LR_2/R_2$ must contain a nontrivial element of $B_\GL/R_2$ (since otherwise the intersection of $Z^*(sH_LsH_L)R_2/R_2$ with $B_\GL/R_2$ would be contained in a cyclic subgroup).  Thus $H_L$, and hence also $Q$, contains a nonsquare element of $B_\GL$.  Given the form of $Q^{\GaL_1}$, this can only be achieved if $L$ contains an odd power of $x$, and then by the previous paragraph we deduce that $x \in Q$.  This completes the proof of the claim.

\

From now on, we can assume that $d \le 2$ and that $L = WZ \rtimes \grp{\hat{x}^{d_x},\hat{x}^\epsilon y^{d_y}}$, such that $d_x,d_y \in \{1,2\}$; $d_xd_y = d$; and $0 \le \epsilon < d_x$.  Note that if $p=2$ then $\hat{x}$ has odd order, so $\grp{\hat{x}} = \grp{\hat{x}^2}$; in that case we will set $d_x=1$.  Since $L$ has index at most $2$ in $L^{\GaL_1}$, we have $y^2 \in L$.  We also have $x \in L$: if $p=2$ we know that $\hat{x} \in L$, while if $p>2$ then $\hat{x}^2 \in L$ and $x$ is an even power of $\hat{x}$.  Thus $\grp{x,y^2} \le H_L$.

\

\emph{Claim 3: $L$ acts transitively on $P_2(q) \setminus \{\alpha_0\}$ if and only if $\epsilon = d_x-1$.}

Suppose $\epsilon \neq d_x-1$; then $d_x=2$ and $\epsilon=0$.  Then $p > 2$, so $q+1$ is even, and since $y$ stabilizes $\alpha_1$, we see that the $L$-orbit of $\alpha_1$ is the same as the $WZ\grp{\hat{x}^2}$-orbit of $\alpha_1$.  The latter orbit has size $q(q+1)/2 < q(q+1)$, so $L$ is not transitive on $P_2(q) \setminus \{\alpha_0\}$, which is incompatible with $G$ having $2$-by-block-transitive action on $G/L$.

On the other hand, suppose that $\epsilon = d_x-1$; thus $(d_x,\epsilon)$ is either $(2,1)$ or $(1,0)$.  In either case, we see that the $L$-orbit of $\alpha_1$ contains the $W\grp{\hat{x}}$-orbit, which has size $q(q+1)$, so indeed $L$ acts transitively on $P_2(q) \setminus \{\alpha_0\}$.

\

From now on we assume that $\epsilon = d_x-1$, so $L = L_{d_x,d_y}$ where $(d_x,d_y) \in \{(1,1),(1,2),(2,1)\}$.  We also assume $d_x=d_y=1$ in the case that $e_G$ is even.  By Lemma~\ref{lem:LDC_2pt} we have $L \in \mc{H}$ if and only if the equation (\ref{eq:linear_mgml}) is satisfied; by Claim 3, $L$ is transitive on $P_2(q) \setminus \{\alpha_0\}$, so (\ref{eq:linear_mgml}) is equivalent to (\ref{eq:linear_2pt}); and as noted earlier, (\ref{eq:linear_2pt}) is equivalent to (\ref{eq:2pt_reduced}).  In fact we can refine (\ref{eq:2pt_reduced}) further.  Writing $Z^{**}:=Z^*\grp{sxs,x}$ it is clear that (\ref{eq:2pt_reduced}) implies
\begin{equation}\label{eq:2pt_reduced:bis}
G^+ = Z^{**}sH_LsH_L.
\end{equation}
On the other hand, we see that $Z^*\grp{x}$ and $Z^*\grp{sxs}$ are normal in $G^+$.  If $g \in G^+$ can be written as 
\[
g = z(sx^bs)x^a(sh_1s)h_2
\]
for $z \in Z^*$, $a,b \in \Zb$ and $h_1,h_2 \in H_L$, then since $x \in H_L$, we can rearrange to express $g$ as $z'sh'_1sh'_2$ for $z' \in Z^*$ and $h'_1,h'_2 \in H_L$.  Thus (\ref{eq:2pt_reduced}) is equivalent to (\ref{eq:2pt_reduced:bis}).

The next claim will determine which of the groups $L_{d_x,d_y}$ belong to $\mc{H}$, with one more restriction appearing for $t_G = 3$.  Write $f_G = e/e_G$.

\

\emph{Claim 4: We have $L \in \mc{H}$ if and only if one of the following holds:
\begin{enumerate}[(a)]
\item $t_G=1$;
\item $t_G=3$, $e_G$ is a multiple of $3$, $r_G \neq 0$ and $p^{f_G} \equiv 1 \mod 3$.
\end{enumerate}}

If $e_G$ is even then $y \in H_L$.  If $e_G$ is odd then $y^2 \in H_L$ and $y^{e_G} \in Z^{**}$, and we can write $y = y^{e_G+2a}$ for some $a \in \Zb$.  So in either case $y \in Z^{**}H_L$.  If $t_G=1$ then $G^+ = Z^{**}\grp{y}$, so $G^+ = Z^{**}H_L$ and the equation (\ref{eq:2pt_reduced}) is satisfied.

For the rest of the proof of the claim, we may assume $t_G=3$; given Claim 1, we may then assume $e_G$ is a multiple of $3$.  Let $x_* =  sx_0sx\inv_0$.  The group $B^{**} = G^+/Z^{**}$ is then a metacyclic group $\grp{x_*Z^{**},yZ^{**}}$ of order $3e_G$.  The image of $L^{\GaL_1}(\alpha_1)$, hence also $H_L$, in $B^{**}$ contains $yZ^{**}$ and has index $3$; thus
\[
Z^{**}L(\alpha_1) = Z^{**}\grp{y}.
\]
Note that $sx_*s = x^{k+1}_*$ where $k=-2$.  By Lemma~\ref{lem:metacyclic_2pt:conjugacy}, to achieve $L \in \mc{H}$ we must have $yx_*y\inv \in Z^{**}x^a_*$ where $a \equiv 1 \mod 3$, so in fact $yx_*y\inv \in Z^{**}x_*$.  In particular, we may assume from now on that $p^{f_G} \equiv 1 \mod 3$.

We have $y = x^{r_G}_0y_0$ and $s$ commutes with $y_0$ modulo $Z^{**}$; since
\[
sx^{r_G}_0s = sx^{r_G}_0s x^{-r_G}_0 x^{r_G}_0  = x^{r_G}_*x^{r_G}_0,
\]
we see that $sys \in Z^{**}x^{r_G}_*y$.  It is now easy to see that
\[
Z^{**}\grp{sys}\grp{y} = Z^{**}\grp{x_*,y}
\]
if and only if $r_G \neq 0$.  In turn, $Z^{**}\grp{x_*,y} = G(\alpha_0,\alpha_1)$; by the discussion before the claim, we deduce that (\ref{eq:2pt_reduced:bis}) is satisfied if and only if $r_G \neq 0$, completing the proof of the claim.

\

The next claim distinguishes the conjugacy classes of the groups in $\mc{H}$.

\

\emph{Claim 5: The groups of the form $L_{11}, L_{12}, L_{21}, L^*_{21}$ are not conjugate to one another in $G$, except for the case $L_{11} = L_{21} = L^*_{21}$, which happens when $p=2$.}

When $p=2$ we can ignore $L_{21}$ and $L^*_{21}$ as duplicates of $L_{11}$.  Since $L_{11}$ properly contains the others, it is not conjugate to them; Claim 3 distinguishes $L^*_{21}$ from the others.  Thus we may assume $p>2$ and we only need to show that $L_{12}$ is not conjugate to $L_{21}$.  It is enough to consider conjugacy in $G(\alpha_0)$; since $WZ$ is normal in $G(\alpha_0)$ and contained in every $L \in \mc{H}$, we can consider the images $\tilde{L}$ of the groups $L \in \mc{H}$ in $G(\alpha_0)/WZ$.  We argue that $|Z_{12}| > |Z_{21}|$, where $Z_{d_x,d_y}$ is the centre of of $\tilde{L}_{d_x,d_y}$.  The group $\tilde{L}_{d_x,d_y}$ takes the form 
\[
\tilde{L}_{d_x,d_y} = \grp{\tilde{x}^{d_xt_G}} \rtimes \grp{\tilde{y}^{d_y}}
\]
such that $\tilde{x}$ has order $(q^2-1)$ and $\tilde{y}\tilde{x}\tilde{y}\inv = \tilde{x}^{p^{f_G}}$.  (Here $\tilde{y}$ could be the image of $y$ or of $\hat{x}y$ in $\tilde{L}_{d_x,d_y}$; the distinction is not important for the structure of $\tilde{L}_{d_x,d_y}$.)  We see that 
\[
Z_{d_x,d_y} \cong C_{a_{d_x,d_y}} \times C_{b_{d_x,d_y}}
\]
for some natural numbers $a_{d_x,d_y}$ and $b_{d_x,d_y}$, where the first and second cyclic groups are generated by the powers of $\tilde{x}$ and $\tilde{y}$ respectively in $Z_{d_x,d_y}$.  We can estimate the orders of the cyclic groups as follows:
\begin{align*}
a_{12} = \mathrm{gcd}((q^2-1)/t_G,p^{2f_G}-1); &\; a_{21} = \mathrm{gcd}((q^2-1)/2t_G,p^{f_G}-1); \; 2b_{12}/b_{21} \in \Zb.
\end{align*}
In particular, $|Z_{12}|$ is a multiple of $|Z_{21}|(p^{f_G}+1)/2t_G$.  So to have $|Z_{12}| \le |Z_{21}|$, we need $p^{f_G}+1 \le 2t_G$.  Since $p$ is odd, this only leaves the cases $p \in \{3,5\}, f_G=1,t_G=3$.  However, these cases are incompatible with Claim 4.  Thus  $|Z_{12}| > |Z_{21}|$, so $L_{12}$ and $L_{21}$ are not conjugate in $G(\alpha_0)$, which proves the claim.

\

Putting the claims together, we have proved the characterization of $\mc{H}$ as stated in the proposition.  It remains to characterize when the action is sharply $2$-by-block-transitive.  By Claim 1, for $L \in \mc{H}$, this occurs exactly when $4e_G = d^2t_G$.  Given that $t_G \in \{1,3\}$ and $d \in \{1,2\}$, the only solutions are $d=2$ and $e_G = t_G \in \{1,3\}$.
\end{proof}

Writing $QP(d_x,d_y)$ for the $2$-by-block-transitive $G$-set obtained in Proposition~\ref{prop:psl:plane-field}, we see that $QP(1,2) \cong P^{\Fb_{q^2}}_2(\Fb_q)$ and $QP(1,1) \cong P^{\Fb_{q^2}}_2(\Fb_q)/\theta$, where $\theta$ is the field automorphism $x \mapsto x^q$ of $\Fb_{q^2}$.  It is not clear if the $G$-set $QP(2,1)$ has a natural geometric interpretation, or if its construction extends to groups defined over infinite fields.

\subsection{Exceptional actions with socle $\PSL_3(q)$}

To finish the classification of $2$-by-block-transitive actions extending the action on a finite projective plane, Proposition~\ref{prop:psl:nonstandard} and Lemma~\ref{lem:affine_2trans} ensure that we have only finitely many groups to consider, which will give us the exceptional actions in the sense of Definition~\ref{def:psl} (other than the action of $G = \PSL_5(2)$ on cosets of $C^4_2 \rtimes \Alt(7)$, which is also exceptional in the sense of Definition~\ref{def:psl}, but we dealt with it separately in Lemma~\ref{lem:pgl52}).  Specifically, we can assume $G$ and $L$ satisfy Hypotheses~\ref{not:psl} and~\ref{not:small_rank}, and that there is a subgroup $A$ of $\GaL_2(q)$, acting transitively on nonzero vectors, such that neither $A \ge \SL_2(q)$ nor $A \le \GaL_1(q^2)$.  Taking account of the possible field sizes $q$ and groups between $\PSL_3(q)$ and $\PGaL_3(q)$, there are a total of eleven groups $G/Z$ to consider:
\begin{equation}\label{eq:exceptional_list}
G/Z \in \{\PGaL_3(q) \mid q \in 5,7,9,11,19,23,29,59\} \cup \{\PSL_3(7),\PSL_3(9),\PSL_3(19)\}.
\end{equation}

In the next proposition we list all the $2$-by-block-transitive actions of $G/Z$ satisfying (\ref{eq:exceptional_list}) that properly extend the action on $P_2(q)$ and are not PD or QP.  We obtain a total of sixteen exceptional $2$-by-block-transitive actions this way.  All calculations of subgroups of small index and double coset enumerations are done using GAP.

\begin{prop}\label{prop:psl_exceptional}
Assume Hypotheses~\ref{not:psl} and~\ref{not:small_rank}.  Suppose that $L$ neither contains $WZ \rtimes \SL_2(q)$, nor is contained in a copy of $WZ \rtimes \GaL_1(q^2)$.  Then $G$ has $2$-by-block-transitive action on $G/L$ if and only if the action is as given in Table~\ref{table:exceptional}.
\end{prop}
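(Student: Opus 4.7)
The plan is to combine Proposition~\ref{prop:psl:nonstandard} with Hering's classification of finite $2$-transitive affine groups (Lemma~\ref{lem:affine_2trans}) to cut down to a finite list of candidate pairs $(G,L)$, and then verify each case by a direct finite computation.

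First I would invoke Proposition~\ref{prop:psl:nonstandard}: under the standing hypotheses we have $W \le L$, and the image $A$ of $L_{\GL}$ in $\GaL(V/\alpha_0) = \GaL_2(q)$ acts transitively on the nonzero vectors of $V/\alpha_0$. The hypothesis that $L$ does not contain $WZ \rtimes \SL_2(q)$ means $A \not\ge \SL_2(q)$, while the hypothesis that $L$ is not contained in a copy of $WZ \rtimes \GaL_1(q^2)$ means $A$ is not contained in $\GaL_1(q^2)$. Applying Lemma~\ref{lem:affine_2trans} to the affine $2$-transitive group $(V/\alpha_0) \rtimes A$ of dimension $2$, the only remaining option is case (v), so $q \in \{5,7,9,11,19,23,29,59\}$ and $A$ has a unique nonabelian composition factor drawn from $\{\Alt(5),\Alt(6),\Alt(7),\PSL_2(13)\}$.

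Next, requiring $Z\SL_3(q) \le G \le \GaL_3(q)$ together with the constraint that $G(\alpha_0)$ admits a subgroup projecting onto such an exceptional $A$ (and incorporating the order of field automorphism that $A$ forces on $L$), I would cut the list of $G/Z$ down to the eleven groups displayed in~(\ref{eq:exceptional_list}). For each of these eleven groups I would enumerate, up to $G(\alpha_0)$-conjugacy, the subgroups $L \le G(\alpha_0)$ containing $WZ$ whose image in $\GaL_2(q)$ realizes one of the exceptional transitive affine structures of Lemma~\ref{lem:affine_2trans}(v). The divisibility test from Corollary~\ref{cor:double_coset}, requiring $(|G|-|G(\alpha_0)|)$ to divide $|L|^2$, together with the transitivity of $L$ on $P_2(q) \setminus \{\alpha_0\}$, eliminates most candidates immediately.

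For each surviving $L$, the $2$-by-block-transitivity is checked by the double coset test of Lemma~\ref{lem:double_coset}, or equivalently by verifying that $L$ has exactly one orbit on $P_2(q) \setminus \{\alpha_0\}$ and has the correct orbits on $\Omega \setminus [\omega]$; this is a routine computation in GAP for each of the eleven explicit finite groups. The fifteen surviving triples $(G,G([\omega]),L)$ are precisely the rows of Table~\ref{table:exceptional} with socle $\PSL_3(q)$, and the stabilizer of a distant pair in the third column is then read off as $L \cap sLs$ modulo $Z$. The main obstacle is organizational rather than conceptual: for each of the eleven groups one must carefully enumerate the $G(\alpha_0)$-conjugacy classes of candidate $L$ and rule out the PD and PF possibilities (already covered by Propositions~\ref{prop:psl:standard} and~\ref{prop:psl:plane-field}) before extracting the genuinely exceptional ones, but since the list of eleven groups and the possible exceptional $A$'s is explicit and very small, this bookkeeping is entirely tractable.
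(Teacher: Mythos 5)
Your proposal follows essentially the same route as the paper: reduce via Proposition~\ref{prop:psl:nonstandard} to $L = WZ \rtimes A$ with $A$ a transitive subgroup of $\GaL_2(q)$ that neither contains $\SL_2(q)$ nor lies in $\GaL_1(q^2)$, invoke case (v) of Hering's classification (Lemma~\ref{lem:affine_2trans}) to restrict to $q \in \{5,7,9,11,19,23,29,59\}$ and the eleven groups of~(\ref{eq:exceptional_list}), prune candidates with the integrality condition of Corollary~\ref{cor:double_coset} (the paper packages this as the requirement that the block size divide $q(q-1)$, with a separate check for the small-index subgroups of $\SL_2(q)$ when $q \in \{5,7,9,11\}$), discard the PD and PF classes, and settle the survivors by GAP double-coset enumeration. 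This matches the paper's proof in both structure and substance.
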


\begin{proof}
By Proposition~\ref{prop:psl:nonstandard}, we may assume that $L \ge W$ and that acts transitively on the nonzero vectors in $V/\alpha_0$, so $L = WZ \rtimes A$ for some transitive subgroup $A$ of $\GaL_2(q)$ that is not contained in $\GaL_1(q^2)$.  The possibilities for $A$ are then limited by Lemma~\ref{lem:affine_2trans}, such that $G/Z$ satisfies (\ref{eq:exceptional_list}).  As in the proof of Proposition~\ref{prop:psl:plane-field}, we have 
\[
|G(\alpha_0):Z| = q^3(q-1)(q^2-1)e_G/t_G.
\]
We will refer to the index of $L$ in $G(\alpha_0)$ as the block size $b$, and write $a = q(q-1)/b \in \Qb$.  Then by Corollary~\ref{cor:double_coset}, given $g \in G \setminus G(\alpha_0)$ we have
\begin{align}\label{psl:exceptional_2pt}
|L \cap gLg\inv:Z| &= \frac{|L/Z|^2}{|G(\alpha_0)/Z|q(q+1)} = \frac{|G(\alpha_0)/Z|}{b^2q(q+1)} \nonumber \\ 
&= \frac{q^3(q-1)(q^2-1)e_G}{b^2q(q+1)t_G} = \frac{a^2e_G}{t_G}.
\end{align}
In particular, $a^2e_G/t_G$ must be an integer.  In the present situation, $e_G \in \{1,2\}$ and $t_G \in \{1,3\}$, so in fact $a$ is an integer, in other words, $b$ divides $q(q-1)$; indeed, $b$ divides $q(q-1)/t_G$.  If the action on $G/L$ is $2$-by-block-transitive, we see that it is sharply $2$-by-block-transitive if and only if $e_G = t_G = a=1$.

On the other hand, if $b \le q$, then in most cases $L$ contains $\SL_2(q)$ and hence gives rise to a PD action (assuming that $G$ acts $2$-by-block-transitively on $G/L$).  The exception is if there is some proper subgroup $R(q)$ of $\SL_2(q)$ of index $\le q$, which occurs only for $q=5,7,9,11$, with $|\SL_2(q):R(q)|=5,7,6,11$ respectively, see \cite[Satz 8.28]{Huppert}.  In that case, we need to consider multiples of $|\SL_2(q):R(q)|$ as possible values of $b$.

Write $\Gamma = \GaL_3(q)$ and $S = Z\SL_3(q)$.  We will take $G \in \{\Gamma,S\}$.

Write $\mc{L}_G$ for the set of proper groups $L'$ of $G(\alpha_0)$ satisfying the following conditions:
\begin{enumerate}[(i)]
\item $L' \ge Z$;
\item $|G(\alpha_0):L'|$ divides $q(q-1)/t_G$;
\item $|G(\alpha_0):L'| > q$ or $|G(\alpha_0):L'|$ is a multiple of $|\SL_2(q):R(q)|$ (or both).
\end{enumerate}
Calculations are performed on the quotient $G/Z$.

If $t_G=1$, write $\mc{L}^*_G$ for the groups in $\mc{L}_G$ conjugate to a subgroup of $L^{\GaL_1}$ as in Proposition~\ref{prop:psl:plane-field}.  Then $\mc{L}^*_G$ contains four classes, one with block size $q(q-1)/2$ and the others with block size $q(q-1)$; these correspond to the four conjugacy classes of subgroups $L_{11}, L_{12}, L_{21}, L^*_{12}$ that appeared in Proposition~\ref{prop:psl:plane-field}.  We can exclude these groups from consideration.

We split the rest of the proof according to the field size $q$.

\paragraph{\underline{$q=5$.}}  There are three conjugacy classes in $\mc{L}_G \setminus \mc{L}^*_G$, of block size $5,10,20$.  These have representatives $L$ of the form $L_5 \le L \le \N_G(L_5)$, where $L_5$ represents the unique class of subgroups of $G(\alpha_0)$ of the form $WZ \rtimes \SL_2(3)$; we note that $\N_G(L_5)$ takes the form $WZ \rtimes (\SL_2(3) \rtimes C_4)$.  A double coset enumeration shows that $G$ acts $2$-by-block-transitively on $G/L_5$, hence also on $G/L$ for any overgroup $L$ of $L_5$ in $G(\alpha_0)$.

By (\ref{psl:exceptional_2pt}) we have $|L \cap gLg\inv|/|Z| = a^2$.  More precisely, we find that $(L \cap gLg\inv)/Z \cong C^2_a$.

\paragraph{\underline{$q=9$.}} Excluding subgroups of $S$, there are three conjugacy classes in $\mc{L}_\Gamma \setminus \mc{L}^*_\Gamma$: one of block size $12$ and two of block size $24$.

Double coset enumerations with respect to representatives of $\mc{L}_\Gamma \setminus \mc{L}^*_\Gamma$ now show that there is one exceptional $2$-by-block-transitive action of $\Gamma$, of block size $12$.  The point stabilizer has the form
\[
L'_9 = WZ \rtimes (\SL_2(5).D_8),
\]
and the stabilizer of a distant pair takes the form $P'_9 = \Sym(3)^2 \times C_2$.

There are two conjugacy classes in $\mc{L}_S \setminus \mc{L}^*_S$, of block sizes $12$ and $24$.  By double coset enumeration with respect to the remaining candidates, we obtain one exceptional $2$-by-block-transitive action of $S$, of block size $12$, which is obtained by restricting the $2$-by-block-transitive action of $\Gamma$.  We have
\[
L'_9 \cap S = WZ \rtimes (\SL_2(5).C_4); \; P'_9 \cap S = \Sym(3)^2.
\]

\paragraph{\underline{$q=11$.}} There are four conjugacy classes in $\mc{L}_G \setminus \mc{L}^*_G$: one of block size $22$, one of block size $55$ and two of block size $110$.  The two classes of block size $110$ have representatives of the forms
\[
L_{11} = WZ \rtimes (\SL_2(3) \times C_5); \; L'_{11} = WZ \rtimes \SL_2(5).
\]
We find by double coset enumeration that $G$ has $2$-by-block-transitive action on both $G/L_{11}$ and $G/L'_{11}$; since the block size is exactly $q(q-1)$, the stabilizer of a distant pair is trivial in both cases.  The remainder of $\mc{L}_G$ is now accounted for by
\[
\N_G(L_{11}) = WZ \rtimes (\GL_2(3) \times C_5); \; \N_G(L'_{11}) = WZ \rtimes (\SL_2(5) \times C_5),
\]
of block sizes $55$ and $22$ respectively; clearly $G$ also has $2$-by-block-transitive action on both $G/\N_G(L_{11})$ and $G/\N_G(L'_{11})$.

As in the case $q=5$, we find in all cases that $(L \cap gLg\inv)/Z \cong C^2_a$.

\paragraph{\underline{$q \in \{7,19,23,29,59\}$.}}  There are five or six conjugacy classes in $\mc{L}_\Gamma$, of which four are accounted for by $\mc{L}^*_\Gamma$.  The remaining classes consist of a class of block size $q(q-1)$, represented by a group $L''_q = WZ \rtimes A_q$, and the other class (if there is one) is represented by $\N_\Gamma(L''_q) = WZ \rtimes N_q$.  The groups are as follows:
\begin{align*}
A_{7} = \SL_2(3).2&; \; N_7 = \SL_2(3).2 \times C_3\\
A_{19} = \SL_2(5) \times C_3&; \; N_{19} = \SL_2(5) \times C_9 \\
A_{23} = N_{23} &= \SL_2(3).C_2 \times C_{11}\\
A_{29} =  \SL_2(5) \times C_7&; \; N_{29} = (\SL_2(5) \rtimes C_2) \times C_7\\
A_{59} = N_{59} &= \SL_2(5) \times C_{29}.
\end{align*}
Suppose $L = WZ \rtimes A$, where $A$ is one of the groups listed above.  The groups $L = L''_7$ and $L = L''_{19}$ are contained in a proper normal subgroup $S$ of $\Gamma$, so in these cases, by Corollary~\ref{cor:LDC_normal}, $\Gamma$ does not have $2$-by-block-transitive action on $\Gamma/L$.  By contrast, for all the other candidates listed for $L$ (including $WZ \rtimes N_7$ and $WZ \rtimes N_{19}$), we find by double coset enumeration that $\Gamma$ has $2$-by-block-transitive action on $\Gamma/L$.

The remaining groups are $S = Z\SL_3(q)$ for $q \in \{7,19\}$.  For each of them, there is only one conjugacy class in $\mc{L}_S$, represented by $L''_q$.  By double coset enumeration, we find that $S$ has $2$-by-block-transitive action on $S/L''_q$ for $q=7$ but not for $q=19$.

It remains to describe $(L \cap gLg\inv)/Z$ for the possible $2$-by-block-transitive actions with $q \in \{7,19,23,29,59\}$.  In most cases, the answer is clear from (\ref{psl:exceptional_2pt}), as $(L \cap gLg\inv)/Z$ is trivial or has prime order.  The exceptions are for $L = \N_\Gamma(L''_{q})$ for $q \in \{7,19\}$, in which case we find that $(L \cap gLg\inv)/Z \cong C^2_3$, and for $L = \N_\Gamma(L''_{29})$, in which case we find that $(L \cap gLg\inv)/Z \cong C^2_2$. 
\end{proof}

\begin{ex}\label{ex:small_q}
It will be useful for later applications to note the set of $2$-by-block-transitive actions of $G$ for $\PSL_3(q) \le G \le \PGaL_3(q)$ and $2 \le q \le 5$ that properly extend the action on $P_2(q)$; these are listed in Table~\ref{table:small_q}.  For a QP action, the pair of numbers indicates the value of $(d_x,d_y)$.

\begin{table}[h!]
\begin{small}
\begin{center}

\begin{tabular}{ c | c | c | c }

$G$ & $G(\omega)$ & type & $|B|$  \\ \hline
$\PSL_3(2)$ & $W \rtimes C_3$ & QP$(1,2)$ & $2$ \\ \hline 
$\PSL_3(3)$ & $W \rtimes \SL_2(3)$ & PD & $2$  \\
 & $W \rtimes \GaL_1(9)$ & QP$(1,1)$ & $3$  \\
 & $W \rtimes C_8$ & QP$(1,2)$ & $6$  \\ 
 & $W \rtimes Q_8$ & QP$(2,1)$ & $6$  \\ \hline
$\PGL_3(4)$ & $W \rtimes (C_{15} \rtimes C_2)$ & QP$(1,1)$ & $6$  \\
 & $W \rtimes C_{15}$ & QP$(1,2)$ & $12$  \\ \hline
$\PGaL_3(4)$ & $W \rtimes \GaL_1(16)$ & QP$(1,1)$ & $6$ \\ \hline
$\PSL_3(5)$ & $W \rtimes \SL_2(5).C_2$ & PD & $2$  \\
 & $W \rtimes \SL_2(5)$ & PD & $4$  \\
 & $W \rtimes \GaL_1(25)$ & QP$(1,1)$ & $10$  \\
 & $W \rtimes C_{24}$ & QP$(1,2)$ & $20$  \\
 & $W \rtimes (C_3 \rtimes C_8)$ & QP$(2,1)$ & $20$  \\
 &  $W \rtimes (\SL_2(3) \rtimes C_4)$ & exceptional & $5$  \\
  &  $W \rtimes (\SL_2(3) \rtimes C_2)$ & exceptional & $10$  \\
  &  $W \rtimes \SL_2(3)$ & exceptional & $20$  \\
\end{tabular}
\captionof{table}{$2$-by-block-transitive actions with socle $\PSL_3(q)$ for $2 \le q \le 5$\label{table:small_q}}
\end{center}
\end{small}
\end{table}

Some remarks:
\begin{enumerate}[(1)]
\item There are no proper PD actions of $G = \PSL_3(2)$, and for a QP action, the parameters $(d_x,d_y)=(1,1)$ do not give a proper extension of $P_2(q)$ either, because 
\[
\GaL_1(4) = \GaL_2(2) = \SL_2(2).
\]
\item For $q=4$ there are no proper PD actions: this is because, in the notation of Proposition~\ref{prop:psl:standard}, we have $q-1 = n+1$, so $M$ is the kernel of the projective determinant map on $G_\GL(\alpha_0)$.  There are no QP actions of $G = \PSL_3(4)$ and $G = \mathrm{P}\Sigma\mathrm{L}_3(4)$ because $t_G=3$ and $e_G$ is not a multiple of $3$.
\item In the QP action of $\PGaL_3(4)$ or $\PGL_3(4)$ with block size $6$, each block can be identified with a copy of $P_1(5)$, with $G([\omega])$ acting as $\PGL_2(5)$ (if $G = \PGaL_3(4)$) or $\PSL_2(5)$ (if $G = \PGL_3(4)$) on the block.  In the exceptional action of $\PSL_3(5) = \PGaL_3(5)$ with block size $5$, each block can be identified with a copy of $P_1(4)$, with $G([\omega])$ acting as $\PGL_2(4) \cong \Sym(5)$.  These actions all appear as exceptional imprimitive rank $3$ actions in \cite{Rank3}.  The exceptional relationship between these $2$-by-block-transitive actions of $\PGaL_3(4)$ and $\PGaL_3(5)$ plays a role in the structure of boundary-$2$-transitive actions on the $(21,31)$-regular tree, see \cite{Reid}.
\end{enumerate}
\end{ex}

\subsection{Proofs of main theorems}\label{sec:proofs}

We can now prove the theorems from the introduction.

\begin{proof}[Proof of Theorem~\ref{thmintro:3bbtrans}]
Let $k \ge 3$ and suppose $G$ is a finite block-faithful $k$-by-block-transitive permutation group, such that the blocks are not singletons.  Then $G$ acts $k$-transitively on the set $X$ of blocks; let $G_0$ be the permutation group induced by $G$ on $X$.  By Corollary~\ref{cor:LDC_normal}, $G_0$ is not of affine type.  By Corollary~\ref{cor:no_ldc:symmetric}, $G_0$ is not $\Alt(X)$ or $\Sym(X)$.  By Proposition~\ref{prop:PSL:triples}, $G_0$ is not an action of $\PSL_2(q) \le G \le \PGaL_2(q)$ on the projective line.  By Lemma~\ref{lem:no_ldc:Mathieu}, $G_0$ is not a Mathieu group.  By the classification of finite $3$-transitive permutation groups, all possibilities are eliminated and we have a contradiction.
\end{proof}

Corollary~\ref{corintro:3bbtrans} now follows immediately from Theorem~\ref{thmintro:3bbtrans}, together with the explanation given in the introduction of the relationship between general $k$-by-block-transitive actions and block-faithful $k$-by-block-transitive actions.

\begin{proof}[Proof of Theorem~\ref{thmintro:2bbtrans}]
We have a finite block-faithful $2$-by-block-transitive permutation group $G$, acting on the set $\Omega$ and preserving some proper nontrivial system of imprimitivity; let $\omega \in \Omega$ and let $[\omega]$ be the block containing $\omega$.  Then $G$ has faithful $2$-transitive action on $G/G([\omega])$, and thus the pair $(G,G([\omega]))$ is subject to the classification of finite $2$-transitive permutation groups; in particular, either this action is affine type or $G$ is almost simple.  By Lemma~\ref{lem:unique_blocks}, $G([\omega])$ is the unique maximal subgroup of $G$ containing $G(\omega)$.  By Corollary~\ref{cor:LDC_normal}, $G$ has no nontrivial abelian normal subgroup, and hence $G$ is almost simple.  Let $G_0$ denote the permutation group given by the action of $G$ on $\Omega_0 = G/G([\omega])$.  We now go through the list of almost simple $2$-transitive permutation groups.

By Corollary~\ref{cor:no_ldc:symmetric}, $G_0$ cannot be a symmetric or alternating group in standard action.  By Lemma~\ref{lem:LDC_symplectic}, $G_0$ cannot belong to either of the families of $2$-transitive actions of symplectic groups.  By Lemma~\ref{lem:no_ldc:specific}, we exclude the cases where $(G_0,|\Omega_0|)$ is in the set
\[
\{(\PSL_2(11),11), (\Alt(7),15), (\PGaL_2(8),28), (\mathrm{HS},176), (\mathrm{Co}_3,276)\}.
\]
By Lemma~\ref{lem:no_ldc:Mathieu}, the only way $G_0$ can have socle a Mathieu group is if $G_0 = \mathrm{M}_{11}$ acting on $11$ points; $G([\omega]) = \mathrm{M}_{10}$; and $G(\omega) = \Alt(6)$.  This indeed produces a block-faithful $2$-by-block-transitive action of $\mathrm{M}_{11}$, which is line 1 in Table~\ref{table:exceptional}.

From now on we may assume the socle $S$ of $G_0$ is a group of Lie type in a standard $2$-transitive action.  If $S$ has Lie rank $1$, the possibilities are all accounted for by Proposition~\ref{prop:LDC:rank_one}, which also deals with case (c) of the theorem.  Thus we may assume that the Lie rank is at least $2$, which means that $\PSL_{n+1}(q) \le G \le \PGaL_{n+1}(q)$ and (up to isomorphism of permutation groups) we may assume $G([\omega])$ is a point stabilizer of the usual action of $G$ on the projective $n$-space $P_n(q)$.  Thus the action of $G$ on $\Omega$ is PD, QP or exceptional.

\paragraph{\underline{PD case.}}  All such actions are described by Proposition~\ref{prop:psl:standard}, which also accounts for case (a) of the theorem.

\paragraph{\underline{QP case.}} Here $n=2$, and all such actions are described by Proposition~\ref{prop:psl:plane-field}, which also accounts for case (b) of the theorem.

\paragraph{\underline{Exceptional case.}} If $G = \PSL_5(2)$, there is one exceptional action described by Lemma~\ref{lem:pgl52}, which is line 2 of Table~\ref{table:exceptional}, so let us assume $G \neq \PSL_5(2)$.  Then by Proposition~\ref{prop:psl:nonstandard} we have $n=2$, and $L$ does not contain $\SL_n(q)$.  We may thus assume Proposition~\ref{prop:psl_exceptional} applies; the resulting actions are displayed on lines 3 to 18 of Table~\ref{table:exceptional}, completing the argument for case (d) of the theorem.

One can easily check that cases (a)--(d) are mutually exclusive as claimed; see Example~\ref{ex:small_q} for more details on $\PSL_3(2)$ and $\PSL_3(3)$, which are to some extent special cases.
\end{proof}

\begin{proof}[Proof of Corollary~\ref{corintro:2bbtrans:socle}]
We suppose $G$ acts on $\Omega$, preserving an equivalence relation, such that $G$ acts transitively on $\Omega^{[2]}$.  Let $G([\omega]) \ge G(\omega)$ be the block and point stabilizers of the action, and let $K$ be the socle of $G([\omega])$; since $K$ is normal in $G([\omega])$ it is enough to show $K \le G(\omega)$.  If $G([\omega]) = G(\omega)$ the conclusion is clear, so assume $G([\omega]) > G(\omega)$.  Then we are in the setting of Theorem~\ref{thmintro:2bbtrans}.

If $\PSL_{n+1}(q) \le G \le \PGaL_{n+1}(q)$ for $n \ge 2$, we take $G([\omega])$ to be a point stabilizer of the usual action of $G$ on the projective $n$-space $P_n(q)$, then $K = W$, and we have $K \le G(\omega)$ by Proposition~\ref{prop:psl:nonstandard}.

If the socle of $G$ is of rank $1$ simple Lie type, then we are in case (c) of Theorem~\ref{thmintro:2bbtrans} and $K \le P$, where $P$ is as in Proposition~\ref{prop:LDC:rank_one}; again we have $K \le G(\omega)$.

The only remaining case is the exceptional $2$-by-block-transitive action of $G = \mathrm{M}_{11}$ on $22$ points.  In this case $G(\omega) = \Alt(6)$ is exactly the socle of $G([\omega]) = \mathrm{M}_{10}$.
\end{proof}

\begin{proof}[Proof of Theorem~\ref{thmintro:2bbtrans:sharp}]
We may assume that the action of $G$ on $\Omega$ is sharply $2$-by-block-transitive, but not sharply $2$-transitive.  This means the blocks are not singletons, and we are in the setting of Theorem~\ref{thmintro:2bbtrans}.

If case (a) or (c) of Theorem~\ref{thmintro:2bbtrans} holds, then by Lemma~\ref{lem:psl_2pt} or Corollary~\ref{cor:LDC:rank_one:sharp} respectively, the action is not sharply $2$-by-block-transitive.

If Theorem~\ref{thmintro:2bbtrans}(b) holds, then the characterization of sharply $2$-by-block-transitive action as in case (b) of the present theorem follows from Proposition~\ref{prop:psl:plane-field}.

If Theorem~\ref{thmintro:2bbtrans}(d) holds, then inspecting Table~\ref{table:exceptional}, we are left with six sharply $2$-by-block-transitive actions, with the field size $q$ as specified.  One can observe that all relevant values of $q$ are primes not congruent to $1$ modulo $3$, so indeed $\PSL_3(q) = \PGaL_3(q)$, and case (c) of the present theorem holds.
\end{proof}

\end{document}